\documentclass[11pt]{article}
\usepackage{amsmath}
\usepackage{amssymb}
\usepackage{amsfonts, amsthm}
\usepackage{array}
\usepackage{booktabs}
\usepackage{comment}
\usepackage{boldfonts}
\usepackage{subcaption}
\usepackage{CJKutf8}
\usepackage{bm,multirow}
\usepackage[a4paper]{geometry}
\usepackage{cancel}
\usepackage{graphicx} 
\usepackage{import}
\usepackage{pdfpages}
\usepackage{transparent}
\usepackage{xcolor,makecell}
\usepackage[show]{ed}
\numberwithin{equation}{section}

\makeatletter
\def\oversortoftilde#1{\mathop{\vbox{\m@th\ialign{##\crcr\noalign{\kern3\p@}%
      \sortoftildefill\crcr\noalign{\kern3\p@\nointerlineskip}%
       \hfil\displaystyle{#1}\hfil \crcr}}}\limits}
\def\sortoftildefill{ \m@th \setbox\z@\hbox{$\braceld$}%
  \braceld\leaders\vrule \@height\ht\z@ \@depth\z@ \braceru }
\makeatother

\def\bH{\mathbf{H}}
\def\bv{\mathbf{v}}
\def\bu{\mathbf{u}}
\def\bw{\mathbf{w}}
\def\bE{\mathbf{E}}
\def\bn{\mathbf{n}}
\def\bx{\mathbf{x}}

\def\be{\mathbf{e}}
\def\cT{\mathcal{T}}
\def\cI{\mathcal{I}}

\DeclareMathOperator{\difdiv}{div}

\DeclareMathOperator{\curl}{curl}
\DeclareMathOperator{\iimp}{imp}
\DeclareMathOperator{\rot}{rot}

\newcommand{\imp}[2]{{\rm imp}_{#1}(#2)}
\newcommand{\imph}[2]{{\rm imp}^h_{#1}(#2)}

\newcommand{\ignore}[1]{}

\newcommand{\ri}{{\mathrm{i}}}

\newtheorem{definition}{Definition}[section]
\newtheorem{theorem}[definition]{Theorem}
\newtheorem{corollary}[definition]{Corollary}
\newtheorem{proposition}[definition]{Proposition}
\newtheorem{assumption}[definition]{Assumption}
\newtheorem{lemma}[definition]{Lemma}
\newtheorem{remark}[definition]{Remark}
\newtheorem{conjecture}[definition]{Conjecture}
\newtheorem{cor}[definition]{Corollary}

\usepackage{hyperref}
\definecolor{myblue}{rgb}{0,0,0.6}
\definecolor{lightGray}{RGB}{198,198,198}
\hypersetup{colorlinks=true, linkcolor=myblue,citecolor=myblue,filecolor=myblue,urlcolor=myblue}

\usepackage{soul}

\definecolor{escol}{rgb}{0,0,0.8}
\definecolor{estcol}{rgb}{0,0.5,0}
\definecolor{esnewcol}{rgb}{0,0.5,0}

\newcommand{\mythmname}[1]{\emph{\textbf{(#1)}}}

\title{Convergence of parallel overlapping  
domain decomposition methods with impedance boundary conditions
for time-harmonic Maxwell equations in heterogeneous media

}

\author{%
  Luyu Cen$^{1,2}$,
  Shihua Gong$^{1,2,4}$\thanks{Corresponding author. Email: \texttt{gongshihua@cuhk.edu.cn}},
  Euan A.~Spence$^{3}$,
  Yue Yu$^{1,2}$
}
\newcommand{\authoraffiliations}{%
  \footnotesize
  $^{1}$School of Science and Engineering, The Chinese University of Hong Kong, Shenzhen, Guangdong 518172, China\\
  $^{2}$Shenzhen International Center for Industrial and Applied Mathematics, Shenzhen Research Institute of Big Data, Shenzhen, Guangdong 518172, China\\
  $^{3}$Department of Mathematical Sciences, University of Bath, Bath BA2 7AY, United Kingdom\\
  $^{4}$Shenzhen Loop Area Institute, Shenzhen, Guangdong 518038, China
}
\date{\today}
\begin{document}
\maketitle
\vspace{-1.5em}
\begin{center}
\authoraffiliations
\end{center}
\vspace{0.5em}

\begin{abstract}
This paper analyzes the convergence of parallel overlapping 
domain-decomposition methods with impedance boundary conditions for the time-harmonic Maxwell equations in heterogeneous media. 
We prove that the parallel iterative method is well-posed in an appropriate function space, and characterize the error propagation operator through impedance-to-impedance maps that describe interactions between neighboring subdomains. For strip  domain decompositions, we derive explicit convergence estimates in terms of the norms of the impedance-to-impedance maps. 
At the discrete level, we develop the finite-element counterpart of these results based on N\'{e}d\'{e}lec-element discretisations.
Under the assumption that the discrete impedance-to-impedance maps approximate their continuous counterparts as the mesh is refined, we show that the discrete method inherits the convergence behavior of the continuous method.
We illustrate this theory with 
numerical experiments for strip domain decompositions, and also present numerical
experiments for checkerboard domain decompositions that go beyond our theory. 
\end{abstract}

\section{Introduction}\label{sec:intro}

\paragraph{Context: the numerical solution of the time-harmonic Maxwell equations}

The time-harmonic Maxwell equations model electromagnetic wave propagation in a wide range of applications, including radar, wireless communications, and medical imaging. At high frequencies, however, their numerical solution becomes particularly challenging. Indeed, to resolve the oscillations in the solution, one needs to use sufficiently fine meshes, leading to very large discrete systems. Furthermore, these systems are typically non-Hermitian and highly indefinite, making the design of scalable and robust iterative solvers a difficult task. The situation is even more complicated in heterogeneous media, where spatially varying material parameters further affect both the underlying propagation of the wave and its numerical approximation. 

\paragraph{Finite-element methods and the pollution effect.}
Just as for the Helmholtz equation, finite-element methods with fixed polynomial degree suffer from the \emph{pollution effect} when applied to the time-harmonic Maxwell equations; i.e., with a fixed number of points per wavelength, the accuracy decreases as the frequency increases \cite{BaSa:00}.

Sufficient conditions for accuracy on the mesh width as a function of frequency and (fixed) polynomial degree 
are then given in 
\cite{CGS1} (for the standard FEM with first- or second-family N\'ed\'elec elements) 
\cite{LuWu:25}
(for the standard FEM with second-family 
N\'ed\'elec elements)
\cite{Lu2026preasymptotic,lu2019continuous} (for continuous interior penalty methods) 
and \cite{feng2014absolutely}
for interior penalty discontinuous Galerkin methods. Furthermore \cite{nicaise2020convergence,melenk2024wavenumber, MeWo:26} show that -- just as in the Helmholtz case -- increasing the polynomial degree like $\log k$, where $k$ is the frequency, removes the pollution effect.

\paragraph{Existing theory of two-level domain-decomposition methods for Maxwell.}
By the discussion in the last paragraph, the linear systems arising from the FEM grow at least like $k^d$ as $k\to\infty$. There has been much research on the efficient solution of these linear systems using domain-decomposition (DD) methods.
One class of DD methods for which there exists rigorous theory are two-level methods; i.e., subdomain solves (on overlapping subdomains) are combined with a global coarse solve. 
Convergence theory for two-level methods
with piecewise-polynomial coarse spaces 
used as a GMRES preconditioner 
was given for fixed $k$ in 
\cite{gopalakrishnan2003overlapping}, and for imaginary $k$ (i.e., for problems with absorption) in \cite{bonazzoli2019domain}, with the recent paper \cite{bonazzoli2026analysis} obtaining analogues of the latter results in the case of heterogeneity.
If the absorption is 
small enough, then the problem with absorption is a good preconditioner for the Maxwell equations with $k$ real \cite{spence2026preconditioning}, and this fact was then used in \cite{li2025hybrid} to prove results about two-level DD for Maxwell with a particular problem-adapted coarse space (i.e., where the coarse-space functions are precomputed).

\paragraph{The subject of this paper:~theory of one-level domain-decomposition methods for Maxwell.}
This paper is concerned with the theory of one-level overlapping domain-decomposition methods for the time-harmonic Maxwell equations, for which, to our knowledge, there is no existing convergence theory (at either at the continuous or the discrete level). 

It is important to emphasise that the character of one-level methods is very different from that of the two-level methods described above. Indeed, in two-level methods, the coarse space must resolve the propagative behaviour of the waves (otherwise the behaviour is worse than that of the corresponding one-level method; see, e.g., \cite[Table 9]{bootland2021comparison}) and the subdomains are taken to be small, usually of diameter $\sim k^{-1}$, and so do not see the propagation. In contrast, understanding how the subdomain problems see the propagation of the waves is crucial in understanding one-level methods (as demonstrated in the Helmholtz context in \cite{gong2022convergence, galkowski2024convergence}).
We emphasise that, when using a two-level method, the coarse problem is usually still very large, and so one often solves it 
using a one-level method (see, e.g., \cite{bootland2021comparison, bonazzoli2019domain}); in this case, therefore, understanding the behaviour of the one-level method is necessary for a full two-level analysis. 

To further illustrate the difference between one- and two-level methods, we note that the theory of two-level methods described above is all based on showing that the field of values of the preconditioned system does not contain the origin, with GMRES convergence results then following from the Elman's estimate \cite{El:82, EiElSc:83} and its variants \cite{BeGoTy:06}.
Figure \ref{fig:FOV} shows that the field of values of the one-level method 
considered in the present paper
(defined precisely in Definition \ref{def:discrete} below)
 contains the origin, and thus the standard field-of-values argument cannot be used. 
 
\begin{figure}
    \centering
\includegraphics[width=0.65\textwidth]{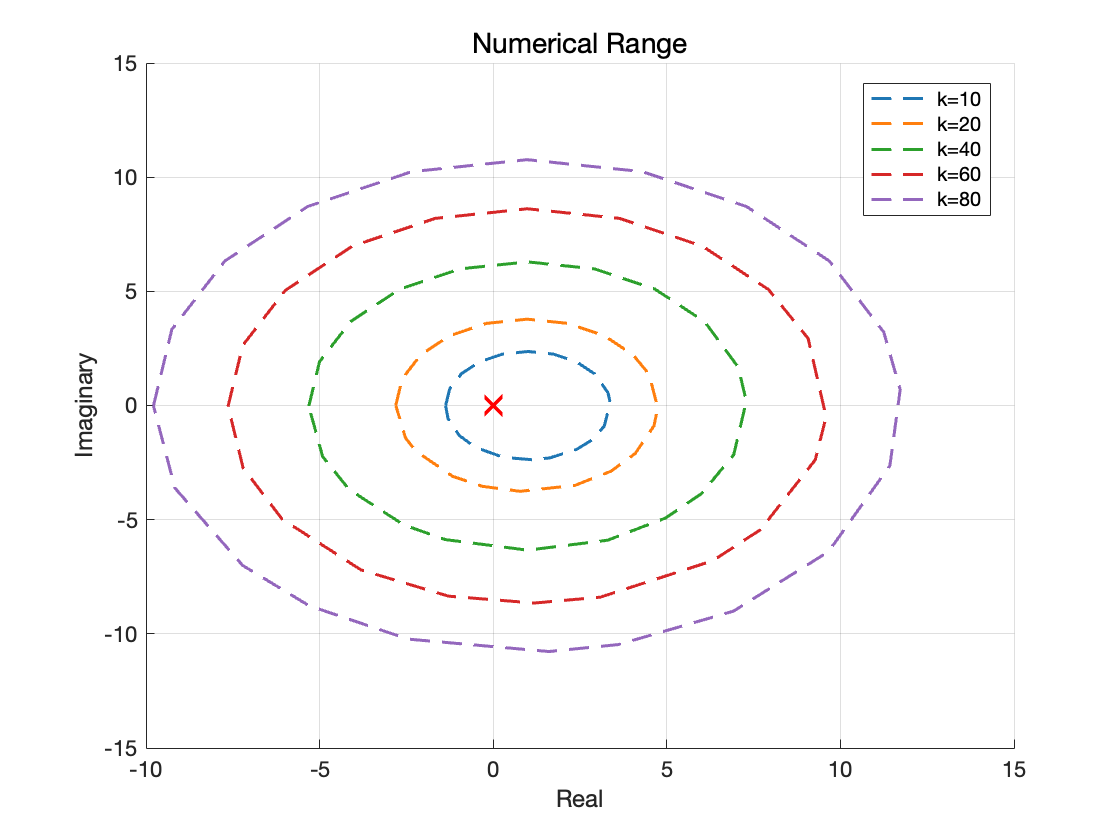}
    \caption{The field of values (FOV) of RAS-imp (with 2 subdomains) for different values of $k$ -- all contain the origin (marked with a red cross).}\label{fig:FOV}
\end{figure}

\paragraph{The contribution of this paper.}

This paper studies a parallel overlapping Schwarz method with impedance boundary conditions on subdomain interfaces (see Definition \ref{def:Schwarz} below). 
At the discrete level, this method can be viewed as a particular type of \emph{restricted additive Schwarz (RAS)} method (see \S\ref{sec:equivalence} below).

For the Helmholtz equation (with constant coefficients), a convergence theory for the parallel Schwarz method with impedance boundary conditions and its discrete analogue was given in \cite{gong2022convergence, gong2023convergence, lafontaine2023sharp}. In more detail:~\cite{gong2022convergence} considered the Schwarz iteration as a fixed-point iteration at the PDE level, 
and formulated the error propagation operator in terms of impedance-to-impedance maps between the subdomain boundaries.
The paper 
\cite{lafontaine2023sharp} studied the norms of the impedance-to-impedance maps at high frequency at the PDE level for strip-wise decompositions, and 
\cite{gong2023convergence} showed that the corresponding finite element discretization inherits the convergence behavior of the continuous problem.

This paper studies of one-level Maxwell DD methods with impedance transmission conditions from the perspective introduced (in the Helmholtz context) in \cite{gong2022convergence, gong2023convergence, lafontaine2023sharp}. One important difference is that, while \cite{gong2022convergence, gong2023convergence, lafontaine2023sharp} considered \emph{constant-coefficient} Helmholtz problems, we work from the start with \emph{variable-coefficient} Maxwell problems.
Remark \ref{rem:compare} at the end of the paper summarises to what extent the results of this paper obtain the Maxwell analogues of the Helmholtz results in 
\cite{gong2022convergence, gong2023convergence, lafontaine2023sharp}.

The main contributions of this paper are as follows.
\begin{itemize}
\item
We develop a PDE-level framework for parallel overlapping Schwarz methods with impedance transmission conditions for time-harmonic Maxwell equations in \emph{variable-coefficient} media. We prove well-posedness of the iteration for general Lipschitz subdomains (using the harmonic-analysis results of \cite{Ne:67,JeKe:95,costabel1990remark}), derive the subdomain error equations, and show that the resulting error propagation operator can be expressed in terms of impedance-to-impedance maps between artificial interfaces.
\item For strip decompositions, we give sufficient conditions on the operator norms of these impedance-to-impedance maps for the parallel Schwarz method to converge, thereby reducing the analysis of the convergence to bounds on interface maps rather than on the full error propagation operator directly. Analogous formulations are available for more general decompositions, at the cost of more complicated interface geometries (see \cite[\S3.2 and \S6.2]{gong2022convergence}).
\item At the discrete level, we show that the N\'ed\'elec finite element discretization of the parallel Schwarz method can be interpreted as a restricted additive Schwarz (RAS-imp) method, and we formulate the discrete error propagation operator in terms of discrete impedance-to-impedance maps. Under the assumption that the discrete maps approximate their continuous counterparts as $h\to 0$, we prove that the discrete method inherits the convergence behavior of the continuous one.
\item We provide numerical experiments for two-dimensional Maxwell problems that validate the power contractivity of the discrete error propagation operator, illustrate the boundedness of the impedance-to-impedance maps, and demonstrate the effectiveness of RAS-imp as both a stationary iteration and a Krylov preconditioner for strip and checkerboard decompositions in homogeneous and heterogeneous media.
\end{itemize}

The remainder of the paper is organized as follows.
Section~\ref{sec:wp} introduces the Maxwell interior impedance problem and establishes well-posedness of the continuous parallel Schwarz method.
Section~\ref{sec:errorprop} derives the error equations on each subdomain and rewrites the error propagation operator in terms of impedance-to-impedance maps.
Section~\ref{sec:4} gives sufficient conditions on the norms of these maps for convergence on strip decompositions.
Sections~\ref{sec:5} and~\ref{sec:6} develop the N\'ed\'elec discretization, identify the discrete method with a RAS-imp preconditioned Richardson iteration, and show that the discrete scheme inherits the convergence behavior of the continuous one as $h\to 0$.
Section~\ref{sec:numerical} reports numerical experiments; in particular, \S\ref{sec:num4} computes the norms of the impedance-to-impedance maps and verifies the assumptions used in Section~\ref{sec:4}.

\section{Well-posedness of the domain-decomposition method at the continuous level}\label{sec:wp}

\subsection{The Maxwell interior impedance problem and associated notation}\label{sec:stab}

For $D\subset\mathbb{R}^3$ a bounded Lipschitz open set with boundary $\partial D${\color{blue},}
let $(\cdot, \cdot)_{D}$ and $\langle \cdot, \cdot \rangle_{\Gamma}$ denote the complex $L^2(D)$ inner product and the complex $L^2(\partial D)$ inner product defined, respectively, as
\[
(\bv, \bw)_{D} = \int_{D} \bv\cdot \overline{\bw} \, d\bx \quad \text{and} \quad \langle \bv, \bw \rangle_{\partial D} = \int_{\partial D} \bv\cdot \overline{\bw} \, ds.
\]
Below, we abbreviate $\|\cdot\|_{L^2(D)}$ and $\|\cdot\|_{L^2(\partial D)}$ by $\|\cdot\|_{0,D}$ and $\|\cdot\|_{0,\partial D}$, respectively.
For $s\geq 0$, let $\bH^s(D):=(H^s(D))^3$, and let $\|\cdot\|_{s,D}$ denote the corresponding norm. 

We use the following vector-valued function spaces:
\begin{align*}
	&\mathbf{L}^2(D):=(L^2(D))^3,\\
	&\bH(\text{div}, D):=\{\bv\in \mathbf{L}^2(D)\mid \difdiv\bv \in L^2(D)\},\\
	&\bH(\text{curl}, D) :=\{\bv\in \mathbf{L}^2(D)\mid \curl\bv\in\mathbf{L}^2(D)\},\\ 
    &\bH^{\frac12}(\curl,D) = \{\bv\in \bH^{s}(D)\mid \curl \bv \in \bH^{\frac12}(D)\}.
\end{align*}
Let $\bn$ be the outward-pointing unit normal vector to $\partial D$ and 
\begin{align*}
	&\mathbf{L}_T^2(\partial D) :=\{\bv\in \mathbf{L}^2(\partial D)\mid \bv\cdot\bn=0\text{ a.e. on }\partial D\}.
\end{align*}
For $\bv \in C^\infty(\overline{D})^3$, we define the standard trace operators 
\begin{equation*}
    \gamma_n(\bv) := \bv|_{\partial D} \cdot \bn, 
    \quad
    \gamma_t(\bv):= \bn \times \bv|_{\partial D}, \quad\text{ and } \quad \gamma_T(\bv):= (\bn \times \bv|_{\partial D})\times \bn.
\end{equation*}
Recall that $\gamma_n$, $\gamma_t$, and $\gamma_T$ can be extended by continuity to act on, respectively, $\bH(\difdiv, D)$, $\bH(\curl,D)$, $\bH(\curl, D)$ (see, e.g., \cite[\S3.5.2-3.5.3]{monk2003finite}).
In what follows, we abbreviate 
$\gamma_t(\bu)$ by $\bn\times \bu$
and 
$\gamma_T(\bu)$ by $\bu_T$.
Let 
  \begin{equation*}
	  \mathbf{X}_\text{imp}(D) := \{\bu\in \bH(\text{curl},D)\mid \bu_T\in \mathbf{L}^2_T(\partial D)\}
  \end{equation*}
  with associated norm
  \begin{equation}
      \label{eq:Ximp}
  \|\bu\|_{\iimp,D}^2 :=\|\curl\bu\|_{0,D}^2 + \|\bu\|_{0,D}^2 + \|\bu_T\|_{0,\partial D}^2.
  \end{equation}

\begin{definition}[Maxwell interior impedance problem]
\label{def:IIP}
Given a bounded, simply connected, Lipschitz open set $\Omega\subset\mathbb{R}^3$, 
$L^\infty$ scalar-valued coefficients $\mu,\epsilon$, and $\lambda$ satisfying
\begin{equation}\label{eq:coe_bound}
        0<\mu_{min}\leq \mu(\bx)\leq \mu_{max}<\infty,\quad 0<\epsilon_{min}\leq \epsilon(\bx)\leq \epsilon_{max}<\infty\quad \text{for a.e. }\bx\in \Omega
\end{equation}
\begin{equation}\label{eq:coe_bound2}
    0<\lambda_{\min}\leq \lambda(\bx) \leq \lambda_{\max}<\infty\quad \text{ for a.e. }\bx\in \partial \Omega,
\end{equation}
and data $\mathbf{f} \in \bL^2(\Omega)$ and $\mathbf{g}_T\in \mathbf{L}^2_T(\partial \Omega)$, find $\bu\in \mathbf{X}_{\rm imp}(\Omega)$ such that 
\begin{equation}\label{eq:iip}
\left\{
\begin{aligned}
        \curl(\mu^{-1}\curl\bu) - k^2 \epsilon\bu = \mathbf{f} ~~&\text{ in } \Omega,\\
        \mu^{-1}\curl \bu\times\bn - i k\lambda \bu_T = \mathbf{g}_T~~ &\text{ on } \partial \Omega.
\end{aligned}
\right.
\end{equation}
\end{definition}

The variational form of problem \eqref{eq:iip} is:~find $\bu\in \bX_{\iimp}(\Omega)$ such that
\begin{equation}\label{eqn:var_global}
	a(\bu,\bv)=F(\bv)\quad \text{ for all }\quad\bv\in \mathbf{X}_{\text{imp}}(\Omega),
\end{equation}
where the sesquilinear form $a: \mathbf{X}_\text{imp}(\Omega)\times\mathbf{X}_\text{imp}(\Omega)\rightarrow \mathbb{C}$ is defined by 
  \begin{equation}
	  a(\bu,\bv) := (\mu^{-1}\curl\bu,\curl\bv)_\Omega - k^2(\epsilon\bu,\bv)_\Omega - \ri k \langle\lambda \bu_T ,\bv_T\rangle_{\partial \Omega}
      \label{eq:bilinear_form}
  \end{equation}
and the anti-linear functional $F$ is defined by
  \begin{equation}\label{e:F}
	F(\bv) :=( \mathbf{f},\bv)_\Omega + \langle \mathbf{g}_T,\bv_T\rangle_{\partial\Omega}.
\end{equation}

For $D$ a bounded Lipschitz open set, let 
\begin{equation*}
        \bm{U}(D):=\Big\{\bu\in \bm{X}_{\text{imp}}(D)\mid \difdiv\bu\in L^2(D),~\curl(\mu^{-1}\curl\bu)\in \bL^2(D),~\curl\bu\times\bn\in \bL^2_T(\partial D)\Big\}
\end{equation*}
\noindent with the norm 
    \begin{align}\nonumber
\|\bu\|_{\bm{U}(D)}^2=\|\bu\|_{0,D}^2 + \|\curl\bu\|_{0,D}^2+
\|\difdiv\bu\|_{0,D}^2+\|\curl(\mu^{-1}\curl\bu)\|_{0, D}^2\\
+\|\curl\bu\times\bn\|_{0,\partial D}^2 + \|\bu_T\|^2_{0,\partial D}.\label{eq:normU}
    \end{align}
We call
\begin{equation}\label{eq:U0}
        \bm{U}_0(D):=\big\{u\in \bm{U}(D)\mid \curl(\mu^{-1}\curl\bu)-k^2\epsilon\bu=\bm{0}
    \big\}
\end{equation}
the space of ``Maxwell-harmonic functions on $D$".

\begin{lemma}[Well-posedness of the interior impedance problem in $\mathbf{X}_{\rm imp}(\Omega)$]
\label{lem:stability}
Suppose that $\Omega\subset\mathbb{R}^3$ is a bounded, simply connected, Lipschitz open set. Suppose that $\epsilon$ and $\mu$ 
satisfy \eqref{eq:coe_bound} and, additionally, 
are piecewise $W^{1,\infty}$ with respect to a partition of $\Omega$ satisfying \cite[ Assumption 1]{BCT12}.
Let $\lambda\in L^\infty(\partial D)$ satisfy \eqref{eq:coe_bound2}.
Given $k>0$, there exists $C>0$ such that given 
$\mathbf{f} \in \bL^2(\Omega)$ and $\mathbf{g}_T\in \mathbf{L}^2_T(\partial \Omega)$ there exists $u\in \mathbf{X}_{\rm imp}(\Omega)$ satisfying the Maxwell interior impedance problem of Definition \ref{def:IIP} and the bound
\begin{equation}\label{eq:stability_imp}
\|\bu\|_{\mathbf{X}_{\rm imp}(\Omega)}\leq C\big(\|{\bf f}\|_{0,\Omega}+\|\bg_T\|_{0,\partial \Omega}\big).
    \end{equation}
\end{lemma}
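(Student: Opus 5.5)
We will prove the lemma by a Fredholm argument: first uniqueness, then compactness, then the Fredholm alternative, which gives existence together with the bound \eqref{eq:stability_imp} for a constant $C$ depending on $k$, the coefficients and $\Omega$.

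\emph{Step 1: uniqueness.} Suppose $a(\bu,\bv)=0$ for all $\bv\in\mathbf{X}_{\rm imp}(\Omega)$. Taking $\bv=\bu$ and the imaginary part gives
\[
k\langle\lambda\bu_T,\bu_T\rangle_{\partial\Omega}=0 .
\]
Since $\lambda\ge\lambda_{\min}>0$, this forces $\bu_T=0$. The boundary condition then gives $\mu^{-1}\curl\bu\times\bn=0$. Hence $\bu$ has vanishing Cauchy data for the Maxwell system. We extend $\bu$ by zero to a slightly larger domain, extending $\mu,\epsilon$ as constants outside $\Omega$. The extension is a solution there that vanishes on an open set. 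The unique continuation principle for Maxwell equations with piecewise $W^{1,\infty}$ coefficients (precisely the setting of \cite[Assumption 1]{BCT12}) then yields $\bu\equiv0$. This step is where the regularity hypothesis on $\mu,\epsilon$ enters. We must check that the partition assumption permits unique continuation across the interfaces, and this is the main obstacle. For this we rely on the result of \cite{BCT12} as stated.

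\emph{Step 2: Fredholm structure.} We use the Helmholtz-type decomposition
\[
\bu=\nabla p+\bw, \qquad p\in H^1_0(\Omega),
\]
where $\bw$ satisfies $\difdiv(\epsilon\bw)=0$. The space $\mathbf{X}_{\rm imp}(\Omega)\cap\{\difdiv(\epsilon\,\cdot)=0\}$ embeds compactly into $\mathbf{L}^2(\Omega)$. This compactness with impedance traces in $\mathbf{L}^2_T$ on a Lipschitz domain is a known result (e.g.\ in the style of Weber/Costabel), and it uses the simple connectedness of $\Omega$ to avoid harmonic fields. Testing with gradients shows that the gradient component is controlled by the coercive form $-k^2(\epsilon\nabla p,\nabla q)$. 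On the divergence-free part, the form
\[
a(\bu,\bv)+(k^2+1)(\epsilon\bu,\bv)_\Omega
\]
has coercive real part. Since the boundary term is purely imaginary, we need a rotation: we take real plus imaginary parts. This yields coercivity with respect to $\|\cdot\|_{\iimp,\Omega}$. Therefore $a$ equals a coercive form plus a compact perturbation, so the associated operator is Fredholm of index zero.

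\emph{Step 3: conclusion.} The data functional $F$ is bounded on $\mathbf{X}_{\rm imp}(\Omega)$, with
\[
|F(\bv)|\le\big(\|\mathbf{f}\|_{0,\Omega}+\|\mathbf{g}_T\|_{0,\partial\Omega}\big)\|\bv\|_{\iimp,\Omega}.
\]
By Step 1 the operator is injective. Being Fredholm of index zero, it is therefore bijective, and its inverse is bounded. This gives existence and \eqref{eq:stability_imp} with $C$ equal to the inverse norm. Finally, the variational solution satisfies \eqref{eq:iip} in the distributional sense: testing with compactly supported $\bv$ gives the PDE, and integrating by parts then gives the impedance condition.
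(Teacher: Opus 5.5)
Your proposal is correct and is essentially the paper's proof written out in detail. The paper obtains uniqueness from the unique continuation principle of \cite[Theorem 2.1]{BCT12}, which is your Step 1 (you also make the standard extension-by-zero argument explicit). It then invokes the standard Fredholm-alternative argument after splitting off the kernel of $\curl$, as in \cite[Section 4]{monk2003finite}, which is your Steps 2--3.

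Two small precisions. First, your ``coercive plus compact'' structure is legitimate because, for $p,q\in H^1_0(\Omega)$, the cross terms $a(\nabla p,\mathbf{z})$ and $a(\bw,\nabla q)$ vanish, so $a$ is block diagonal; you should say so explicitly. Second, $\epsilon$ may be discontinuous, so $\difdiv\bw\notin L^2(\Omega)$ in general and Costabel's $\bH^{1/2}$ regularity does not apply. The compactness of $\{\bw\in\mathbf{X}_{\rm imp}(\Omega):\difdiv(\epsilon\bw)=0\}$ in $\mathbf{L}^2(\Omega)$ therefore needs a Weber-type argument. In that argument the curl-free part $\nabla\phi$ is controlled through the compact embedding $H^1(\partial\Omega)\hookrightarrow H^{1/2}(\partial\Omega)$ of $\phi|_{\partial\Omega}$, rather than by ``avoiding harmonic fields''.
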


We make the following two immediate remarks:
\begin{itemize}
    \item The order of quantifiers in Lemma \ref{lem:stability} implies that $C$ depends on $\Omega,\epsilon, \mu$, and $k$. For some results giving $C$ explicitly in terms of $\epsilon,\mu$, and $k$, see \cite[\S4]{chaumont-frelet2023explicit}.
    \item Partitions of $\Omega$ satisfying \cite[Assumption 1]{BCT12} (see also \cite[(i)–(iii) in statement of Proposition 2.11]{LRX16}) are very general, and can include, e.g., certain collections of infinitely-many subdomains concentrating at a point/on a boundary; see the discussion around \cite[Figure 1]{BCT12}.
    \end{itemize}

\begin{proof}[References for the proof of Lemma \ref{lem:stability}]
For $\Omega,\epsilon,$ and $\mu$ as in the statement of the theorem, the time-harmonic Maxwell equations satisfy a unique continuation principle by \cite[Theorem 2.1]{BCT12}/\cite[Proposition 2.11]{LRX16}. The standard argument  using the Fredholm alternative after removing the nullspace of the curl operator
then shows that the interior impedance problem is wellposed
(see, e.g., \cite[Section 4]{monk2003finite}).
\end{proof}

\begin{lemma}
    [Well-posedness of the interior impedance problem in $\bm{U}(\Omega)$]\label{lem:well pose}
    Under the assumptions of Lemma \ref{lem:stability} and the additional assumption that $\difdiv \mathbf{f}\in L^2(\Omega)$, there exists a unique solution $\bu\in \bm{U}(\Omega)$ of the Maxwell interior impedance problem of Definition \ref{def:IIP} satisfying
\begin{equation}\label{eq:stabilityU}
        \|\bu\|_{\bm{U}(\Omega)}^2\leq C\big(\|{\bf f}\|^2_{0,\Omega} + \|\bg_T\|^2_{0,\partial \Omega} +  \|\difdiv{\bf f}\|^2_{0,\Omega}\big).
    \end{equation}
    \end{lemma}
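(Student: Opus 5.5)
We start from the solution $\bu\in\mathbf{X}_{\rm imp}(\Omega)$ given by Lemma \ref{lem:stability}, which already satisfies $\|\bu\|_{0,\Omega}+\|\curl\bu\|_{0,\Omega}+\|\bu_T\|_{0,\partial\Omega}\le C(\|\mathbf f\|_{0,\Omega}+\|\bg_T\|_{0,\partial\Omega})$. It then suffices to bound the three remaining terms in \eqref{eq:normU}: $\difdiv\bu$, $\curl(\mu^{-1}\curl\bu)$, and $\curl\bu\times\bn$. Uniqueness in $\bm{U}(\Omega)$ is immediate, since $\bm{U}(\Omega)\subset\mathbf{X}_{\rm imp}(\Omega)$ and the solution in $\mathbf{X}_{\rm imp}(\Omega)$ is unique.

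For the interior terms we use the PDE in the distributional sense. Testing \eqref{eqn:var_global} with $\bv\in C_c^\infty(\Omega)^3$ gives $\curl(\mu^{-1}\curl\bu)=\mathbf f+k^2\epsilon\bu\in\bL^2(\Omega)$, with the obvious bound. Testing with $\bv=\nabla\phi$, $\phi\in C_c^\infty(\Omega)$, gives $-k^2\difdiv(\epsilon\bu)=\difdiv\mathbf f$. This controls $\difdiv(\epsilon\bu)$ in $L^2$ by $k^{-2}\|\difdiv\mathbf f\|_{0,\Omega}$, but not $\difdiv\bu$ itself when $\epsilon$ is variable. For $\epsilon$ piecewise $W^{1,\infty}$ we expand $\difdiv(\epsilon\bu)=\epsilon\difdiv\bu+\nabla\epsilon\cdot\bu$ on each piece of the partition. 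This yields $\difdiv\bu\in L^2$ on each piece, with $\|\difdiv\bu\|\le\epsilon_{\min}^{-1}(k^{-2}\|\difdiv\mathbf f\|+\|\nabla\epsilon\|_{L^\infty}\|\bu\|)$. Across interfaces the distributional divergence of $\bu$ carries jump terms from $[\bu\cdot\bn]$, since only $\epsilon\bu\cdot\bn$ is continuous. So this step either requires interpreting $\difdiv$ piecewise, or exploiting the fact that the coefficient structure in \cite{BCT12} permits it. If $\epsilon$ is globally $W^{1,\infty}$ the argument is clean. I would adopt that reading, or work with $\difdiv(\epsilon\bu)$ and note the equivalence.

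The boundary term is the main obstacle. We need $\curl\bu\times\bn\in\bL^2_T(\partial\Omega)$ on a merely Lipschitz boundary. Formally the impedance condition gives $\mu^{-1}\curl\bu\times\bn=\bg_T+ik\lambda\bu_T\in\bL^2_T(\partial\Omega)$. The difficulty is giving this meaning: a priori $\curl\bu\times\bn$ exists only in $\bH^{-1/2}$, and $\mu^{-1}$ is nonsmooth. We set $\bw:=\mu^{-1}\curl\bu$. Then $\bw\in\bL^2$, and $\curl\bw=\mathbf f+k^2\epsilon\bu\in\bL^2$, so $\bw\in\bH(\curl,\Omega)$ and $\gamma_t\bw$ is well defined in $\bH^{-1/2}(\partial\Omega)$. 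Green's formula applied to \eqref{eqn:var_global} with general $\bv\in\mathbf{X}_{\rm imp}(\Omega)$ then identifies $\gamma_t\bw\times\bn$-type pairings with $\bg_T+ik\lambda\bu_T$ in the weak sense. We conclude $\bw\times\bn=\bg_T+ik\lambda\bu_T\in\bL^2_T(\partial\Omega)$, with norm at most $\|\bg_T\|+k\lambda_{\max}\|\bu_T\|$.

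Finally we pass from $\bw$ back to $\curl\bu\times\bn=\mu\,(\bw\times\bn)$. This needs $\mu$ to have a meaningful boundary trace, which holds when $\mu$ is piecewise $W^{1,\infty}$ up to the boundary and bounded by $\mu_{\max}$. Alternatively, we read the norm with $\mu^{-1}\curl\bu$, which suffices for later use. If a genuine $\bH^{1/2}$-type regularity of $\bu$ were needed, we would instead invoke the Lipschitz-domain results \cite{costabel1990remark,JeKe:95}: $\bu\in\bH(\curl)\cap\bH(\difdiv)$ with $\bu_T\in\bL^2_T$ implies $\bu\in\bH^{1/2}$. That is where I would expect technical effort. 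Summing the squared bounds gives \eqref{eq:stabilityU}.
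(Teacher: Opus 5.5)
Your proposal follows the paper's proof: you start from the $\mathbf{X}_{\rm imp}(\Omega)$ solution and bound of Lemma~\ref{lem:stability}, then read off $\curl(\mu^{-1}\curl\bu)$ from the PDE, $\mu^{-1}\curl\bu\times\bn$ from the impedance condition, and the divergence by taking the divergence of the PDE. The caveats you raise are genuine points that the paper's terse proof passes over, and your resolutions of them are sound: taking the divergence directly controls only $\difdiv(\epsilon\bu)$, which gives $\difdiv\bu\in L^2(\Omega)$ when $\epsilon\in W^{1,\infty}(\Omega)$ but not for merely piecewise-smooth $\epsilon$ (where $\bu\cdot\bn$ jumps across interfaces); the boundary identity must be justified weakly via Green's formula; and the quantity actually bounded is $\mu^{-1}\curl\bu\times\bn$.
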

    \begin{proof}
        By the Maxwell PDE \eqref{eq:iip} and the bound \eqref{eq:stability_imp},$\|\curl(\mu^{-1}\curl\bu)\|_{0,\Omega}\leq k^2\|\epsilon\bu\|_{0,\Omega}+\|{\bf f}\|_{0,\Omega}\leq C\|{\bf f}\|_{0,\Omega}$. Similarly, by the boundary condition in \eqref{eq:iip} and \eqref{eq:stability_imp},  $\|\mu^{-1}\curl\bu\times\bn\|_{0,\partial \Omega}\leq k\|\lambda\bu_T\|_{0,\partial \Omega}+\|\bg_T\|_{0,\partial \Omega}\leq C(\|\bg_T\|_{0,\partial \Omega}+\|{\bf f}\|_{0,\Omega})$.
By taking the divergence of the PDE in \eqref{eq:iip}, we see that
\begin{equation*}
k^2\|\difdiv(\epsilon\bu)\|_{0,\Omega}^2 = \|\difdiv {\bf f}\|_{0,\Omega}^2,
\end{equation*}
and combining these bounds with the definition of the norm $\|\cdot\|_{\bU(\Omega)}$ \eqref{eq:normU} gives the result \eqref{eq:stabilityU}.
    \end{proof}

\subsection{An alternative characterisation of the space \texorpdfstring{$\bm{U}(D)$}{U(D)}}\label{sec:W}
 
For $D$ a bounded Lipschitz open set, let
\begin{equation}\label{e:W}
\bm{W}(D):=\Big\{\bu\in \bH^{\frac12}(\curl,D)\cap \bH(\difdiv,D)\mid\curl(\mu^{-1}\curl\bu)\in \bL^2(D)\Big\}
\end{equation}
with norm 
\begin{equation*}
    \|\bu\|_{\bm{W}(D)}^2:=\|\bu\|^2_{\frac{1}{2},D}+\|\curl\bu\|^2_{\frac{1}{2},D}+\|\curl(\mu^{-1}\curl\bu)\|^2_{0, D}+\|\difdiv\bu\|^2_{0,D}.
\end{equation*}


\begin{theorem}\label{thm:equivalent space}
 Let $D$ be a bounded, simply connected, open set, with $\partial D$ Lipschitz and connected. If $\mu \in W^{1,\infty}(D)$ then $\bm{U}(D)=\bm{W}(D)$.
\end{theorem}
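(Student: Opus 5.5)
We prove the two inclusions separately. Both rest on one harmonic-analysis fact for Lipschitz domains, in the form of Costabel \cite{costabel1990remark} together with the Jerison--Kenig/Ne\v{c}as theory \cite{Ne:67,JeKe:95}. A field $\bw\in\bL^2(D)$ with $\curl\bw\in\bL^2(D)$ and $\difdiv\bw\in L^2(D)$ lies in $\bH^{1/2}(D)$ if either its tangential trace $\bw_T$ or its normal trace $\bw\cdot\bn$ lies in $L^2(\partial D)$. Moreover, the corresponding norm bound holds:
\[
\|\bw\|_{\frac12,D}\le C\big(\|\bw\|_{0,D}+\|\curl\bw\|_{0,D}+\|\difdiv\bw\|_{0,D}+\|\bw_T\|_{0,\partial D}\big),
\]
and similarly with $\bw\cdot\bn$ in place of $\bw_T$. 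Conversely, every $\bw\in\bH^{1/2}(D)$ with $\curl\bw,\difdiv\bw\in L^2$ has $\bw_T,\ \bw\cdot\bn\in L^2(\partial D)$. This converse follows from the Rellich-type identity relating $\|\bw_T\|_{0,\partial D}$ and $\|\bw\cdot\bn\|_{0,\partial D}$ modulo interior terms, combined with the $H^{1/2}$ trace theory of Jerison--Kenig for harmonic-type fields. The hypotheses on $D$ (simply connected, connected Lipschitz boundary) are used to avoid harmonic Dirichlet/Neumann fields. This is what lets the cited regular-decomposition/Helmholtz-decomposition arguments give the estimates without extra finite-dimensional terms.

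\textbf{Inclusion $\bU(D)\subset\bW(D)$.} Take $\bu\in\bU(D)$.
\begin{itemize}
\item Apply the fact to $\bw=\bu$. We have $\curl\bu\in\bL^2$, $\difdiv\bu\in L^2$ and $\bu_T\in\bL^2_T(\partial D)$, so $\bu\in\bH^{1/2}(D)$.
\item Apply the fact to $\bw=\curl\bu$. Its divergence vanishes, and its tangential trace is $(\curl\bu\times\bn)\times\bn$ up to sign, which lies in $L^2(\partial D)$. Its curl must be handled through the coefficient. Set $\bz=\mu^{-1}\curl\bu$, so $\curl\bz\in\bL^2$. Since $\mu\in W^{1,\infty}$,
\[
\curl\curl\bu=\curl(\mu\bz)=\mu\curl\bz+\nabla\mu\times\bz\in\bL^2 .
\]
The fact therefore gives $\curl\bu\in\bH^{1/2}(D)$.
\end{itemize}
Together with $\curl(\mu^{-1}\curl\bu)\in\bL^2$ and $\difdiv\bu\in L^2$, this shows $\bu\in\bW(D)$, with a norm bound.

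\textbf{Inclusion $\bW(D)\subset\bU(D)$.} Take $\bu\in\bW(D)$.
\begin{itemize}
\item Since $\bu\in\bH^{1/2}$ with $\curl\bu,\difdiv\bu\in L^2$, the converse half of the fact gives $\bu_T\in L^2(\partial D)$, so $\bu\in\bX_{\iimp}(D)$.
\item For $\bw=\curl\bu\in\bH^{1/2}$: it has zero divergence, and $\curl\bw\in\bL^2$ by the product rule above, now used in reverse ($\curl\curl\bu=\mu\curl\bz+\nabla\mu\times\bz$ with $\bz=\mu^{-1}\curl\bu$). The converse half then gives $\curl\bu\times\bn\in L^2(\partial D)$.
\end{itemize}
The remaining membership conditions of $\bU(D)$ hold by definition. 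Norm equivalence then follows from the estimates, or from the open mapping theorem once both spaces are known to be Banach and continuously embedded in $\bL^2$.

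\textbf{Main obstacle.} The hard step is the precise form of the trace fact, in particular its converse direction: that $H^{1/2}$ regularity with $L^2$ curl and div forces $L^2$ boundary traces. A general $H^{1/2}(D)$ function has no $L^2$ boundary trace, so the argument cannot use trace theorems alone. It must exploit the div/curl structure. My first attempt is the following.
\begin{itemize}
\item Decompose $\bw=\nabla p+\bw_0$ with $\bw_0\in\bH^1(D)$, whose traces lie in $H^{1/2}(\partial D)\subset L^2$.
\item The scalar $p$ then satisfies $\Delta p\in L^2$ with $\nabla p\in\bH^{1/2}$. Jerison--Kenig nontangential maximal-function estimates for the Laplacian on Lipschitz domains (Dahlberg/Rellich) then give $\nabla p|_{\partial D}\in L^2$.
\end{itemize}
The topological hypotheses on $D$ enter exactly in making this decomposition valid without cohomology corrections.
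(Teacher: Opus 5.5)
Your proposal is correct and follows the paper's route: both inclusions reduce to Costabel's equivalences (Theorem \ref{thm:Costabel}) applied to $\bu$ and to its curl, and Costabel's theorem is obtained, as in the paper, from a decomposition into an $\bH^1$ field plus a gradient $\nabla p$, followed by the Ne\v{c}as/Jerison--Kenig results for $p$. The one difference is minor: you apply the equivalence to $\curl\bu$ itself, getting $\curl\curl\bu\in\mathbf{L}^2$ from $\curl(\mu\mathbf{z})=\mu\curl\mathbf{z}+\nabla\mu\times\mathbf{z}$, whereas the paper applies it to $\mu^{-1}\curl\bu$ (whose divergence is in $L^2$) and then multiplies by the Lipschitz function $\mu$, so your variant avoids the $H^{1/2}$-multiplier lemma.
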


The key corollary of Theorem \ref{thm:equivalent space} for our purposes is the following.

\begin{cor}\label{cor:key1}
     Let $D$ be a bounded, simply connected, open set, with $\partial D$ Lipschitz and connected. 
    If  $\mu \in W^{1,\infty}(D)$, $\bu\in \bm{U}(D)$ and $D'$ is a Lipschitz subdomain of $D$, then the restriction of $\bu$ to $D'$ is in $\bm{U}(D')$.
\end{cor}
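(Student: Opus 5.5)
The idea is to pass through the space $\bm{W}$, where the membership conditions are all interior Sobolev regularity and therefore restrict trivially. Then we come back to $\bm{U}(D')$. Theorem~\ref{thm:equivalent space} cannot simply be applied on $D'$, because $D'$ need not be simply connected or have connected boundary. So the return from $\bm{W}$ to $\bm{U}$ has to be done directly, using the trace theorem for $\bH^{\frac12}$ functions on Lipschitz domains.

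\emph{Step 1: pass to $\bm{W}(D)$ and restrict.} Since $D$ is simply connected with connected Lipschitz boundary and $\mu\in W^{1,\infty}(D)$, Theorem~\ref{thm:equivalent space} gives $\bu\in\bm{W}(D)$. Thus $\bu\in\bH^{\frac12}(D)$, $\curl\bu\in\bH^{\frac12}(D)$, $\difdiv\bu\in L^2(D)$ and $\curl(\mu^{-1}\curl\bu)\in\bL^2(D)$. Restriction from $D$ to $D'\subset D$ is bounded on $H^s$ for $s\in\{0,\tfrac12\}$ (use the intrinsic Sobolev--Slobodeckij norm, which is monotone in the domain). Distributional derivatives also commute with restriction. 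Hence $\bu|_{D'}$, $\curl\bu|_{D'}$, $\difdiv\bu|_{D'}$ and $\curl(\mu^{-1}\curl\bu)|_{D'}$ lie in the corresponding spaces on $D'$, and $\bu|_{D'}\in\bm{W}(D')$ with $\|\bu|_{D'}\|_{\bm{W}(D')}\le\|\bu\|_{\bm{W}(D)}$.

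\emph{Step 2: show $\bm{W}(D')\subset\bm{U}(D')$ with no topological assumption on $D'$.} Here I use only the easy inclusion in Theorem~\ref{thm:equivalent space}, and I check that it needs no topology. The interior conditions in $\bm{U}(D')$ are immediate from Step~1: $\bu\in\bH(\curl,D')$, $\difdiv\bu\in L^2(D')$ and $\curl(\mu^{-1}\curl\bu)\in\bL^2(D')$. The remaining conditions concern boundary traces. For a Lipschitz domain, the trace map $H^{\frac12}(D')\to L^2(\partial D')$ is \emph{not} bounded, so I cannot conclude directly. Instead I use the regularity results already cited in the paper \cite{Ne:67,JeKe:95,costabel1990remark}. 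Costabel's result \cite{costabel1990remark} states that a field $\bv\in\bH(\curl,D')\cap\bH(\difdiv,D')$ whose normal or tangential trace lies in $L^2(\partial D')$ belongs to $\bH^{\frac12}(D')$ and has all traces in $L^2$. The converse direction, via Jerison--Kenig \cite{JeKe:95} and the Rellich/Nečas identities \cite{Ne:67}, gives the estimates that make the two descriptions equivalent. I would reproduce, locally on $D'$, whatever argument the proof of Theorem~\ref{thm:equivalent space} uses for $\bm{W}\subset\bm{U}$. The steps are:
\begin{enumerate}
\item Take a smooth partition of unity subordinate to Lipschitz coordinate charts of $\partial D'$, and localise $\bu$ and $\curl\bu$.
\item On each chart, write the localised field as a gradient plus a regular part. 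Then apply the Rellich identity for harmonic functions on Lipschitz graph domains to bound its tangential trace in $L^2$ by the $H^{\frac12}$ and divergence/curl norms.
\item Apply the same argument to $\bw=\curl\bu$, using $\difdiv\bw=0$ and $\curl(\mu^{-1}\bw)\in\bL^2(D')$. Since $\mu\in W^{1,\infty}$ and $\bw\in\bL^2$, we get $\curl\bw=\mu\curl(\mu^{-1}\bw)-\nabla\mu\times\ldots$, which lies in $\bL^2$. This yields $\curl\bu\times\bn\in\bL^2_T(\partial D')$.
\end{enumerate}
Localisation by cutoffs never sees global topology, so simple connectivity is not needed. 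That assumption enters Theorem~\ref{thm:equivalent space} only in the converse inclusion $\bm{U}\subset\bm{W}$, where one must exclude harmonic Neumann/Dirichlet fields.

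\emph{Main obstacle.} The delicate point is Step~2. If the paper's proof of $\bm{W}\subset\bm{U}$ already works for arbitrary bounded Lipschitz domains, the corollary is just Step~1 followed by that inclusion. If it is not, I would instead prove the boundary-trace statements for $\bu|_{D'}$ directly from the localised Rellich argument sketched above. In that route, the $H^{\frac12}$ interior regularity inherited from $D$ substitutes for the global Helmholtz decomposition.
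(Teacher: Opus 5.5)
Your proposal is correct. Step~1 is exactly the paper's proof, which is the single observation that $\bm{U}(D)=\bm{W}(D)$ by Theorem~\ref{thm:equivalent space}, and that membership in $\bm{W}$ involves only interior conditions and so passes to $D'$.

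Where you differ is the return from $\bm{W}(D')$ to $\bm{U}(D')$. The paper does this tacitly, in effect invoking Theorem~\ref{thm:equivalent space} on $D'$. That requires $D'$ to be simply connected with connected boundary, and the corollary places no such hypothesis on $D'$. You notice this and prove only $\bm{W}(D')\subset\bm{U}(D')$, by localisation. Your route gives the corollary as actually stated, at the cost of re-running the Costabel/Jerison--Kenig argument locally. The paper's route is a one-liner but really only covers topologically trivial $D'$.

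Your Step~2 can be made cleaner than your sketch. Cover $\partial D'$ by finitely many coordinate cylinders $C_i$. Each $D'\cap C_i$ is a bounded Lipschitz domain homeomorphic to a ball, so the ``if'' direction of Theorem~\ref{thm:Costabel}(b) applies directly to $\bu|_{D'\cap C_i}$. It also applies to $\curl\bu|_{D'\cap C_i}$: this field is divergence-free, lies in $\mathbf{H}^{1/2}$ by Step~1, and has $\curl\curl\bu=\mu\curl(\mu^{-1}\curl\bu)+\mu^{-1}\nabla\mu\times\curl\bu\in\mathbf{L}^2$. The resulting $L^2$ traces on the pieces $\partial D'\cap C_i$ patch together, and no cut-offs are needed for this direction.

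Two small corrections:

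\begin{itemize}
\item The step that turns $v\in H^{3/2}$ with $\Delta v\in L^2$ into an $L^2$ tangential gradient on the boundary is Jerison--Kenig (Theorem~\ref{thm:JKN}(b)), not a Rellich identity. Rellich/Ne\v{c}as only relates the Neumann trace to the tangential gradient.
\item Your remark that simple connectivity enters only the inclusion $\bm{U}\subset\bm{W}$ is inaccurate. The paper's proof of Theorem~\ref{thm:Costabel} uses the global decomposition $\bu=\nabla v+\bw$ in both directions. With cut-offs, both directions localise, so harmonic Neumann/Dirichlet fields are not an obstruction in either direction.
\end{itemize}
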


\begin{proof}
With $\bm{U}(D)$ characterised via $\bm{W}(D)$ by Theorem \ref{thm:equivalent space} -- importantly without any conditions on $\partial D$ -- the result follows immediately.
    \end{proof}

\cite[Theorem 2]{costabel1990remark} proves that $\bm{U}(D)\subset \bm{W}(D)$, but the reverse inclusion (and hence the result of Theorem \ref{thm:equivalent space}) follows by the same argument. Since this result is key to the present paper, and relies on the harmonic-analysis results of \cite{Ne:67,JeKe:95}, we recap the proof here.

\begin{theorem}\mythmname{\cite[Theorem 2]{costabel1990remark}}\label{thm:Costabel}
Let $D$ be a bounded, simply connected, open set, with $\partial D$ Lipschitz and connected.  
Suppose that $\bu\in \mathbf{L}^2(D)$, $\difdiv \bu \in L^2(D) $, and $\curl \bu \in \mathbf{L}^2(D)$.    
Then 

(a) $\bn \times \bu \in \mathbf{L}^2(\partial D)$ if and only if $\bn\cdot \bu\in \mathbf{L}^2(\partial D)$.

(b) $\bn \times \bu \in \mathbf{L}^2(\partial D)$ if and only if $\bu \in \mathbf{H}^{1/2}(D)$.
\end{theorem}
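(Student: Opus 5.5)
The plan is the classical Helmholtz-decomposition argument of Costabel, which reduces the vector statement to scalar harmonic-analysis results on Lipschitz domains.

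\textbf{Step 1: decomposition.} Since $D$ is bounded, simply connected and Lipschitz, I extend $\curl \bu$ (divergence-free, with $\curl\bu\in\mathbf{L}^2(D)$) by zero outside $D$ after a suitable correction. I then write
\[
\bu = \nabla p + \bw, \qquad \bw \in \mathbf{H}^1(D).
\]
Concretely, $\bw$ is the restriction of a vector potential $\bw=\curl \mathbf{A}$ of an $\mathbf{L}^2(\mathbb{R}^3)$ extension $\mathbf{z}$ of $\curl\bu$ with $\difdiv\mathbf{z}=0$. By the Newtonian potential, $\bw\in\mathbf{H}^1_{\rm loc}(\mathbb{R}^3)$ and $\curl\bw=\curl\bu$ in $D$. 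Because $D$ is simply connected, $\curl(\bu-\bw)=0$ gives $\bu-\bw=\nabla p$ with $p\in H^1(D)$. Taking divergences, $\Delta p = \difdiv\bu - \difdiv\bw \in L^2(D)$. Since $\bw\in \mathbf{H}^1(D)$, its traces lie in $\mathbf{H}^{1/2}(\partial D)\subset \mathbf{L}^2(\partial D)$ and $\bw\in\mathbf{H}^{1/2}(D)$. So every statement reduces to the gradient part $\nabla p$.

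\textbf{Step 2: removing the Laplacian.} I subtract from $p$ a function $q\in H^2(\mathbb{R}^3)$ solving $\Delta q = \Delta p$, obtained by extending $\Delta p$ by zero and convolving with the fundamental solution. This leaves $h=p-q$ harmonic in $D$, with all statements still unchanged modulo $\mathbf{H}^1$ terms. Now $\bn\times\nabla h$ is the tangential gradient $\nabla_T h$ (up to rotation), and $\bn\cdot\nabla h=\partial_n h$.

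\textbf{Step 3: harmonic analysis for harmonic $h$.} For harmonic $h$ on a Lipschitz domain, I use three results.
\begin{itemize}
\item The Rellich identity estimates of Ne\v{c}as \cite{Ne:67}, in the form of solvability of the regularity (Dirichlet with $H^1(\partial D)$ data) and Neumann problems with $L^2$ data, with nontangential maximal function control (Jerison–Kenig, Dahlberg–Kenig). These give $\|\nabla_T h\|_{0,\partial D}\simeq \|\partial_n h\|_{0,\partial D}$ modulo constants. Connectedness of $\partial D$ is used to handle the constant and to invert the Neumann problem. This proves (a).
\item For (b), the direction ``$\nabla_T h\in L^2$ implies $\nabla h\in \mathbf{H}^{1/2}(D)$'' follows from \cite{JeKe:95}: harmonic functions with $h|_{\partial D}\in H^1(\partial D)$ lie in $H^{3/2}(D)$.
\item Conversely, $\nabla h\in \mathbf{H}^{1/2}(D)$ with $\Delta h=0$ should force $h\in H^{3/2}(D)$, whose Dirichlet trace lies in $H^1(\partial D)$, again by the Jerison–Kenig characterization (the trace map is onto/injective at the endpoint $s=3/2$ for harmonic functions). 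Hence $\nabla_T h\in L^2$.
\end{itemize}

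\textbf{Main obstacle.} The main obstacle is Step 3, specifically the endpoint $s=1/2$ (equivalently $H^{3/2}$ for the potential). There the usual trace theorem fails on Lipschitz domains, and one must invoke the harmonic-function-specific theory: nontangential maximal estimates and the $H^{3/2}$ result of \cite{JeKe:95}, rather than general Sobolev arguments. I would cite those results as black boxes. I would check carefully that the traces of the correction terms from Steps 1–2 are genuinely in $L^2(\partial D)$, which holds since they are in $\mathbf{H}^1(D)$.
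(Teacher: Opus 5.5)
Your proposal is correct and follows essentially the same route as the paper. You write $\bu=\nabla p+\bw$ with $\bw\in\bH^1(D)$; the paper cites Girault--Raviart/Hiptmair for $\bw$ rather than building it from a divergence-free extension and vector potential, and both routes observe that $\bw$ contributes only $L^2(\partial D)$ traces and $\bH^{1/2}(D)$ terms before applying Ne\v{c}as for (a) and Jerison--Kenig for (b) to the scalar potential. Your extra step of subtracting a Newton potential $q$ to reduce to harmonic $h$ is harmless but unnecessary, since Theorem~\ref{thm:JKN} is stated directly for $v\in H^1(D)$ with $\Delta v\in L^2(D)$ (and $q$ lies in $H^2_{\rm loc}(\mathbb{R}^3)$ rather than $H^2(\mathbb{R}^3)$, which is all you need).
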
  

We first show how Theorem \ref{thm:equivalent space} follows from Theorem \ref{thm:Costabel}. 

\begin{proof}[Proof of Theorem \ref{thm:equivalent space} using Theorem \ref{thm:Costabel}]
It is sufficient to prove that, for $\bu$ such that
$\bu \in \mathbf{L}^2(D)$, $\curl \bu \in \bL^2(D)$, $\difdiv\bu \in L^2(D)$, and $\curl (\mu^{-1}\curl \bu)\in \bL^2(D)$,
both
\begin{equation}
    \label{e:equiv1}
    \bu_T \in \bL^2(\partial D)  \text{ if and only if } \bu \in \bH^{1/2}(D)
\end{equation}
and
\begin{equation}
    \label{e:equiv2}
    \curl\bu\times \bn \in \bL^2(\partial D)  \text{ if and only if } \curl\bu \in \bH^{1/2}(D).
\end{equation}
The equivalence \eqref{e:equiv1} follows directly from Theorem \ref{thm:Costabel}. The equivalence \eqref{e:equiv2} follows from applying Theorem \ref{thm:Costabel} to $\bv:= \mu^{-1}\curl \bu$. Indeed, $\bv \in \bL^2(D)$ and $\curl \bv \in \bL^2(D)$ by assumption, and $\difdiv\bv \in L^2(D)$ since $\mu\in W^{1,\infty}(D)$. Theorem \ref{thm:Costabel} then implies that 
\begin{equation*}
    (\mu^{-1}\curl\bu)\times \bn \in \bL^2(\partial D)  \text{ if and only if } \mu^{-1}\curl\bu \in \bH^{1/2}(D)
\end{equation*}
and then \eqref{e:equiv2} follows since the product of an $H^{1/2}$ function and a $W^{1,\infty}$ function is $H^{1/2}$ by, e.g., \cite[Theorem 1.4.1.1]{Gr:85} (where we have used that $W^{1,\infty}(D)= C^{0,1}(\overline{D})$ by, e.g, \cite[\S4.2.3, Theorem 5]{EvGa:92}).
\end{proof}

We now prove Theorem \ref{thm:Costabel}. This proof uses the following classic result.

\begin{theorem}\label{thm:JKN}
Let $D$ be a bounded, Lipschitz open set. Suppose that $v\in H^1(D)$ and $\Delta v \in L^2(D)$. Then

(a) $v\in H^1(\partial D)$ if and only if $\partial_n v \in L^2(\partial D)$, and 

(b) $v \in H^1(\partial D)$ if and only if $v\in H^{3/2}(D).$
\end{theorem}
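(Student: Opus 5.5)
The plan is to reduce both statements to the case of harmonic functions, where they are the classical results of Ne\v{c}as \cite{Ne:67} and Jerison--Kenig \cite{JeKe:95}. Let $B$ be a ball containing $\overline{D}$, and let $f:=\Delta v\in L^2(D)$, extended by zero to $B$. Let $N$ be the Newton potential of $f$ (or the solution of $\Delta N=f$ in $B$ with zero Dirichlet data). Interior elliptic regularity gives $N\in H^2(B')$ for a slightly smaller ball $B'\supset\overline{D}$. Hence $N|_D\in H^{3/2}(D)$ (in fact $H^2(D)$), and since $\nabla N\in \bH^1(B')$, the trace theorem on the Lipschitz surface $\partial D$ gives $N|_{\partial D}\in H^1(\partial D)$ and $\partial_n N\in L^2(\partial D)$. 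Setting $h:=v-N$, we have $h\in H^1(D)$ and $\Delta h=0$ in $D$. Every condition in (a) and (b) holds for $v$ if and only if it holds for $h$, so it suffices to treat harmonic $h\in H^1(D)$.

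For (a) I would use the Rellich identity. Because $\partial D$ is Lipschitz, there is a smooth vector field $\boldsymbol{\alpha}$ on $\mathbb{R}^3$ with $\boldsymbol{\alpha}\cdot\bn\ge c>0$ a.e.\ on $\partial D$. For harmonic $h$, the divergence theorem applied to $\boldsymbol{\alpha}|\nabla h|^2-2\mathrm{Re}\big((\boldsymbol{\alpha}\cdot\nabla h)\overline{\nabla h}\big)$ gives
\[
\int_{\partial D}(\boldsymbol{\alpha}\cdot\bn)|\nabla h|^2-2\,\mathrm{Re}\big((\boldsymbol{\alpha}\cdot\nabla h)\,\overline{\partial_n h}\big)\,ds=\int_D O(|\nabla h|^2)\,d\bx .
\]
We then split $\nabla h=(\partial_n h)\bn+\nabla_T h$ on $\partial D$. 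Using $\boldsymbol{\alpha}\cdot\bn\ge c$ and Cauchy--Schwarz, this yields the two-sided estimates
\[
\|\partial_n h\|_{0,\partial D}\le C\big(\|\nabla_T h\|_{0,\partial D}+\|\nabla h\|_{0,D}\big)
\quad\text{and}\quad
\|\nabla_T h\|_{0,\partial D}\le C\big(\|\partial_n h\|_{0,\partial D}+\|\nabla h\|_{0,D}\big).
\]
These give the equivalence in (a).

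The main obstacle is justifying the Rellich identity, since a priori $\nabla h$ has no boundary trace in $L^2(\partial D)$. I would handle this with Verchota's approximating smooth domains $D_j\uparrow D$. I would apply the identity on each $\partial D_j$, with constants uniform in $j$ thanks to the uniform Lipschitz character of the $D_j$. Two results of Dahlberg--Jerison--Kenig then let me pass to the limit. First, solvability of the $L^2$ Neumann and regularity problems with nontangential maximal function control $\|(\nabla h)^*\|_{0,\partial D}$ produces a harmonic function $\tilde h$ with the prescribed data. Second, uniqueness in $H^1(D)$ of that problem identifies $\tilde h$ with $h$.

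For (b), the direction ``$h\in H^1(\partial D)\Rightarrow h\in H^{3/2}(D)$'' uses the solvability of the regularity problem, which gives $(\nabla h)^*\in L^2(\partial D)$. Together with the Jerison--Kenig characterisation of harmonic functions in $H^{3/2}(D)$ \cite[Theorem 5.15]{JeKe:95} (equivalently, the square-function estimate $\int_D|\nabla^2 h|^2\,\mathrm{dist}(\bx,\partial D)\,d\bx\lesssim\|(\nabla h)^*\|^2_{0,\partial D}$), this gives $h\in H^{3/2}(D)$. For the converse, Jerison--Kenig show that for harmonic $h\in H^{3/2}(D)$ the nontangential maximal function of $\nabla h$ lies in $L^2(\partial D)$. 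Then $\nabla h$ has nontangential limits in $L^2(\partial D)$, so $h|_{\partial D}\in H^1(\partial D)$. Adding back $N$ completes the proof of both (a) and (b).
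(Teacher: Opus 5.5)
Your proposal is correct. For (b) it follows the route the paper cites, but for (a) it takes a genuinely different and much heavier route.

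\textbf{Part (a).} The paper cites Ne\v{c}as \cite{Ne:67} and \cite[Theorem 4.24]{mclean2000strongly}. That argument predates the harmonic-analysis theory of Lipschitz domains: it combines the Rellich identity for sufficiently regular functions with an approximation argument, and works directly with the hypothesis $\Delta v\in L^2(D)$. It needs no nontangential maximal functions, no harmonic measure and no Fatou-type theorems. You instead reduce to harmonic $h$ via a Newton potential, which is a clean step. You then invoke the Jerison--Kenig/Verchota/Dahlberg--Kenig solvability of the $L^2$ Neumann and regularity problems, together with uniqueness in $H^1(D)$ (up to locally constant functions in the Neumann case).

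Once you invoke those theorems, the Rellich identity you flag as ``the main obstacle'' is not needed for the qualitative equivalence. A harmonic $\tilde h$ with $(\nabla\tilde h)^*\in L^2(\partial D)$ has a.e.\ nontangential limits of $\nabla\tilde h$. So $\partial_n\tilde h$ and $\nabla_T\tilde h$ both lie in $L^2(\partial D)$ automatically. The Rellich identity only adds the quantitative two-sided bounds, and it is already used inside the proofs of the solvability results you cite.

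\textbf{What each approach buys.} The paper's references give (a) with far less machinery. Your route gives a single framework for both parts: the control of $(\nabla h)^*$ you obtain in (a) is exactly the input for the Jerison--Kenig characterisation of harmonic $H^{3/2}(D)$ functions.

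\textbf{Part (b).} Your argument coincides with the route the paper cites, namely \cite{JeKe:95} via \cite[Corollary B.5]{ChSp:24}. You rightly use harmonicity here, since the trace map $H^{3/2}(D)\to H^1(\partial D)$ fails for general Lipschitz domains.
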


\begin{proof}[References for the proof of Theorem \ref{thm:JKN}]
The result (a) is due to Ne\v{c}as; see \cite[\S5.1.2 and \S5.2.1]{Ne:67} and \cite[Theorem 4.24]{mclean2000strongly}.
The result (b) follows from the results of Jerison and Kenig \cite{JeKe:95}; see, e.g., \cite[Corollary B.5]{ChSp:24}.
\end{proof}

\begin{proof}[Proof of Theorem \ref{thm:Costabel}]
By, e.g., \cite[Chapter I, Theorem 3.4]{GiRa:86} or \cite[Lemma 2.4]{Hi:02}, there exists $\bw\in \bH^1(D)$ such that $\curl \bw= \curl \bu$ and $\difdiv \bw=0$ in $D$.
Set $\bz:=\bu-\bw$ and observe that $\bz \in \bL^2(D)$, $\curl \bz=\textbf{0}$, and $\difdiv\bz \in L^2(D)$. Since $\partial D$ is simply connected, there exists $v \in H^1(D)$ such that $\nabla v= \bm{z}$ (see, e.g., \cite[Lemma 2.2]{Hi:02}, \cite[\S3.7]{monk2003finite} and \cite[Lemma 3.1]{hiptmair2007nodal}). Observe that $\Delta v = \difdiv \bz \in L^2(D)$, so that the assumptions of Theorem \ref{thm:JKN} are satisfied.
Now
\[
\bn \times \bu = \bn \times \nabla v + \bn \times \bw 
\quad\text{ and }\quad
\bn \cdot\bu = \bn \cdot \nabla v + \bn \cdot \bw.
\]
Since $\bw \in \bH^1(D)$, both $\bn \times \bw $ and $\bn \cdot \bw$ are in $\bL^2(\partial D)$ by the standard trace theorem (see, e.g., \cite[Theorem 3.37]{mclean2000strongly})
and the fact that $\bn\in \bL^\infty(\partial D)$ (by, e.g., \cite[Page 96]{mclean2000strongly}). Part (a) of Theorem \ref{thm:JKN} therefore implies Part (a) of Theorem \ref{thm:Costabel}. 
Finally $v \in H^{3/2}(D)$ if and only if $v\in H^1(D)$ with $\nabla v \in H^{1/2}(D)$ if and only if $\bu \in \bH^{1/2}(D)$, so that (since $\bu-\nabla \phi =\bw \in \bH^1(D)$) Part (b) of Theorem \ref{thm:JKN} implies Part (b) of Theorem \ref{thm:Costabel}.
\end{proof}

\subsection{Boundedness of impedance maps}

The follow result is a consequence of Lemma \ref{lem:well pose} and  Corollary \ref{cor:key1}, and is needed for the well-posedness results in the next subsection.

\begin{lemma}[Boundedness of impedance maps]\label{lem:bound imp}
    Let $D$ be a bounded, simply connected, open set, with $\partial D$ Lipschitz and connected, and let $D'$ be a Lipschitz subdomain of $D$.
Let $\mu,\epsilon\in W^{1,\infty}(D)$ satisfy \eqref{eq:coe_bound}, and let  $\lambda\in L^\infty(D)$ satisfy \eqref{eq:coe_bound2}. Given $k>0$, there exists $C, C'>0$ such that the following is true.
 
(i) For all 
    $\bu\in \bm{U}(D)$,
\begin{equation}\label{eq:bullet1}
            \|\mu^{-1}\curl \bu\times\bn - \ri k\lambda \bu_T\|_{0,\partial D'}\leq C\|\bu\|_{\bm{U}(D')}\leq C'\|\bu\|_{\bm{U}(D)}.
        \end{equation}
         
  (ii) For all $\bu \in \bm{U}_0(D)$, 
    
        \begin{equation*}
            \|\mu^{-1}\curl \bu\times\bn - \ri k\lambda \bu_T\|_{0,\partial D'}\leq   C\|\mu^{-1}\curl \bu\times\bn - \ri k\lambda \bu_T\|_{0,\partial D
            }.
        \end{equation*}
\end{lemma}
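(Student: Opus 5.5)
Part (i) is a trace estimate for $\bm{U}(D')$, combined with the restriction result of Corollary \ref{cor:key1}. For the first inequality in \eqref{eq:bullet1}, I will use the triangle inequality together with $\mu^{-1}\le\mu_{\min}^{-1}$ and $\lambda\le\lambda_{\max}$:
\[
\|\mu^{-1}\curl \bu\times\bn - \ri k\lambda \bu_T\|_{0,\partial D'}\le \mu_{\min}^{-1}\|\curl\bu\times\bn\|_{0,\partial D'}+k\lambda_{\max}\|\bu_T\|_{0,\partial D'}.
\]
Both terms on the right are controlled by $\|\bu\|_{\bm{U}(D')}$ directly from the definition \eqref{eq:normU}, which gives the first inequality with $C=\max(\mu_{\min}^{-1},k\lambda_{\max})$. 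For the second inequality, I first apply Corollary \ref{cor:key1} to get $\bu|_{D'}\in\bm{U}(D')$. The membership statement alone does not yield a bound, so I will make it quantitative by tracing through the proof of Theorem \ref{thm:equivalent space}.

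\textbf{Key step: the norm equivalence $\|\cdot\|_{\bm{U}}\sim\|\cdot\|_{\bm{W}}$.} I will show that $\|\bu\|_{\bm{U}(D)}$ and $\|\bu\|_{\bm{W}(D)}$ are equivalent norms, and likewise on $D'$. Both are Banach space norms on the same space (by Theorem \ref{thm:equivalent space}). Moreover, convergence in either norm implies $L^2$ convergence, so the identity map has closed graph. The closed graph theorem then gives equivalence without having to track the constants in the Ne\v{c}as and Jerison--Kenig estimates. Completeness of $\bm{U}(D)$ needs a brief check: the traces pass to the limit because the tangential traces are continuous on $\bH(\curl)$ into $\bH^{-1/2}$. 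Completeness of $\bm{W}$ is immediate. On $D'$, the Theorem \ref{thm:equivalent space} hypotheses (simply connected, connected boundary) may fail. I will therefore use only the direction $\bm{W}(D')\hookrightarrow$ boundary traces in $L^2$, which follows from the standard $H^{1/2}$-type trace argument in Theorem \ref{thm:Costabel}(b) applied locally, or else add that hypothesis implicitly.

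\textbf{Chaining the estimates.} Restriction is trivially bounded $\bm{W}(D)\to\bm{W}(D')$, since each $H^{1/2}$ and $L^2$ norm on $D'$ is bounded by the corresponding norm on $D$. Combining the three maps
\[
\bm{U}(D)\to\bm{W}(D)\to\bm{W}(D')\to\bm{U}(D')
\]
gives the second inequality in \eqref{eq:bullet1}. I expect this step to be the main obstacle: justifying the bounded embedding $\bm{W}(D')\to\bm{U}(D')$ requires the quantitative forms of Theorem \ref{thm:JKN}, which do hold (Ne\v{c}as' estimates are quantitative).

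\textbf{Part (ii).} For $\bu\in\bm{U}_0(D)$, I view $\bu$ as the solution of the interior impedance problem on $D$ with $\mathbf{f}=\bm{0}$ and $\bg_T:=\mu^{-1}\curl\bu\times\bn-\ri k\lambda\bu_T$. Lemma \ref{lem:well pose} applies on $D$, since $W^{1,\infty}$ coefficients trivially satisfy the piecewise assumption. By uniqueness, $\bu$ is that solution, so $\|\bu\|_{\bm{U}(D)}\le C\|\bg_T\|_{0,\partial D}$. Combining this with part (i) gives (ii).
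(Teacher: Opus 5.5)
Your proposal is correct and follows the paper's proof. The first inequality in (i) comes from the definition \eqref{eq:normU} of $\|\cdot\|_{\bm{U}(D')}$, the second from Corollary \ref{cor:key1}, and (ii) from combining (i) with the stability bound \eqref{eq:stabilityU} applied to $\bu$ viewed as the solution with $\mathbf{f}=\bm{0}$. The paper simply cites Corollary \ref{cor:key1}, which gives only membership, so your closed-graph step usefully supplies the missing constant $C'$; you can also apply the closed graph theorem directly to the restriction map $\bm{U}(D)\to\bm{U}(D')$ (both spaces are complete and both norms dominate the $L^2$ norm), which avoids the detour through $\bm{W}$ and any need for quantitative Ne\v{c}as/Jerison--Kenig estimates.
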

    \begin{proof}
        For the first bullet point, the first inequality follows from the definition \eqref{eq:normU} of the norm on $\bm{U}(D)$ and the second inequality follows from Corollary \ref{cor:key1}. 
        For the second bullet point, by \eqref{eq:bullet1} we only need to show that, for $\bu\in \bm{U}_0(D)$ (defined by \eqref{eq:U0}), 
        \begin{equation*}
            \|\bu\|_{\bm{U}(D)}\leq C\|\mu^{-1}\curl \bu\times\bn - \ri k\lambda  \bu_T\|_{0,\partial D},
        \end{equation*}
        which follows from \eqref{eq:stabilityU}.
    \end{proof}

\subsection{The parallel Schwarz method and its wellposedness}

We decompose $\Omega$ into $N$ overlapping open Lipschitz subdomains $\Omega_\ell$, that is $\overline{\Omega}=\cup_{\ell=1}^N\overline{\Omega_\ell}$. 
Let $\{\chi_\ell\}_{\ell=1}^N$ be a partition of unity (PoU) of $\Omega$ subordinate to the open cover $\{\Omega_\ell\}_{\ell=1}^N$; i.e., 
\begin{equation}\label{eq:chi}
   \begin{gathered}
\operatorname{supp}\chi_\ell\subset \overline{\Omega_\ell}
        ,\quad 0\leq \chi_\ell(\bx)\leq 1\mbox{ for }\bx\in\overline{\Omega_\ell},\quad \sum_\ell \chi_\ell(\bx)=1 \mbox{ for }\bx\in\overline{\Omega},
       \end{gathered}
\end{equation}
and assume, additionally, that each $\chi_\ell \in C^{0,1}(\overline{\Omega_\ell})= W^{1,\infty}(\Omega_\ell)$.

\begin{definition}[Parallel Schwarz method with impedance boundary condition]
\label{def:Schwarz}
Let  $\mu, \epsilon, \lambda, \mathbf{f},$ and $\mathbf{g}$ be as in the Maxwell interior impedance problem of Definition \ref{def:IIP}.
Given $\{\Omega_\ell\}_{\ell=1}^N$ and $\lambda \in L^\infty(\partial \Omega)$, let $\widetilde{\lambda} \in L^\infty(\partial\Omega_\ell)$ for all $\ell=1,\ldots,N$, be such that $\widetilde{\lambda}= \lambda$ on $\partial\Omega$. 

Given 
$\bu^{n}$, define $\bu_\ell^{n+1}$ by 
\begin{equation}\label{eq:iterative-strong-local}
\left\{
\begin{aligned}
       & (\curl(\mu^{-1}\curl) - k^2\epsilon) \bu_\ell^{n+1}  = \mathbf{f} \quad \text{ in } \Omega_\ell,\\
       & (\mu^{-1}\curl\bu_{\ell}^{n+1}  -\ri k\lambda \bn_{\ell}\times \bu_\ell^{n+1})\times\bn_{\ell}  =  \mathbf{g}_T\quad \text{ on }  \partial\Omega_\ell \cap \partial \Omega,\\
       & (\mu^{-1}\curl\bu_{\ell}^{n+1}  -\ri k\widetilde{\lambda} \bn_{\ell}\times \bu_\ell^{n+1})\times\bn_{\ell}
        = (\mu^{-1}\curl\bu^{n} -\ri k\widetilde{\lambda} \bn_{\ell}\times \bu^{n})\times\bn_{\ell}\quad \text{ on } \partial\Omega_\ell\backslash\partial\Omega,
\end{aligned}
\right.
\end{equation}
and then define $\bu^{n+1}$ by
\begin{equation}\label{eq:global sol}
    \bu^{n+1}:=
\sum_{\ell=1}^N \chi_\ell\bu_\ell^{n+1}.
\end{equation}
\end{definition} 

Observe that the parallel Schwarz method can be viewed \emph{either} as taking $\bu^n$ to $\bu^{n+1}$
(as in Definition \ref{def:Schwarz})
 \emph{or} as taking 
$\bm{\mathfrak{u}}^n = (\bu_1^n,\bu_2^n,\cdots,\bu_N^n)$ to  
$\bm{\mathfrak{u}}^{n+1} = (\bu_1^{n+1},\bu_2^{n+1},\cdots,\bu_N^{n+1})$ 
 (with the boundary value problems defining $\bu_\ell^{n+1}$ involving $\bu^n = \sum_{\ell=1}^n \chi_\ell \bu_\ell^n$). 

We now give two well-posedness results for the parallel Schwarz method, corresponding to each of these two interpretations. The result involving $\bm{\mathfrak{u}}^n$ requires the following notation:~let 
\begin{equation}
    \label{eq:mathbbU}
\mathbb{U}:=\prod_{\ell=1}^N\bm{U}(\Omega_\ell)\quad\mbox{and}\quad \mathbb{U}_0:=\prod_{\ell=1}^N \bm{U}_0(\Omega_\ell)
\end{equation}
equipped with the norm $\|\cdot\|^2_{\mathbb{U}}=\sum_{\ell=1}^N \|\cdot\|^2_{\bm{U}(\Omega_\ell)}$.

\begin{theorem}[Well-posedness of the parallel  Schwarz method in $\mathbb{U}$]\label{thm:wellpose}
Let  $\mu, \epsilon, \lambda, \mathbf{f},$ and $\mathbf{g}$ be as in the Maxwell interior impedance problem of Definition \ref{def:IIP}. Suppose $\Omega_\ell,~\ell=1,2,\cdots, N$ are bounded Lipschitz open sets, and let $\widetilde{\lambda}$ be as in Definition \ref{def:Schwarz}.
Suppose further that each $\Omega_\ell$ is simply connected with connected boundary.
    Given $\bm{\mathfrak{u}}^n = (\bu_1^n,\bu_2^n,\cdots,\bu_N^n)\in \mathbb{U}(\Omega)$, let $\bu_\ell^{n+1} (\ell=1,2,\cdots,N)$ be defined by \eqref{eq:iterative-strong-local}. Then $\bm{\mathfrak{u}}^{n+1}\in \mathbb{U}(\Omega)$.
\end{theorem}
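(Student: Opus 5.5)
The plan is to apply Lemma \ref{lem:well pose} on each subdomain $\Omega_\ell$ separately. For this we must check that the local problem \eqref{eq:iterative-strong-local} is an interior impedance problem of the form in Definition \ref{def:IIP}, posed on $\Omega_\ell$, with admissible data. The domain equation has right-hand side $\mathbf{f}|_{\Omega_\ell}\in\bL^2(\Omega_\ell)$, and we need $\difdiv\mathbf{f}\in L^2(\Omega_\ell)$; this holds under the standing assumption of Lemma \ref{lem:well pose}, which we take as part of the hypotheses. The impedance coefficient on $\partial\Omega_\ell$ is $\widetilde\lambda$, equal to $\lambda$ on $\partial\Omega_\ell\cap\partial\Omega$; we assume it satisfies \eqref{eq:coe_bound2}. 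The domain $\Omega_\ell$ is simply connected and Lipschitz, and $\mu,\epsilon$ restricted to $\Omega_\ell$ inherit the regularity assumptions. It therefore remains to show that the boundary datum
\[
\bg_{T,\ell}:=\begin{cases}\bg_T & \text{on }\partial\Omega_\ell\cap\partial\Omega,\\ (\mu^{-1}\curl\bu^{n} -\ri k\widetilde{\lambda}\, \bn_{\ell}\times \bu^{n})\times\bn_{\ell} & \text{on }\partial\Omega_\ell\setminus\partial\Omega\end{cases}
\]
lies in $\bL^2_T(\partial\Omega_\ell)$. Once that is shown, Lemma \ref{lem:well pose} gives a unique $\bu_\ell^{n+1}\in\bm{U}(\Omega_\ell)$ with a norm bound, and hence $\bm{\mathfrak{u}}^{n+1}\in\mathbb{U}$.

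The main step is to show that $\bu^n=\sum_j\chi_j\bu_j^n$ has $L^2$ impedance trace on $\partial\Omega_\ell\setminus\partial\Omega$. I would show that each product $\chi_j\bu_j^n$, extended by zero outside $\Omega_j$, lies in $\bm{U}(D)$ for a suitable Lipschitz set $D$ around the interface, and then use Corollary \ref{cor:key1} and Lemma \ref{lem:bound imp}(i) to restrict to $\Omega_\ell\cap\Omega_j$ and take traces on the part of $\partial\Omega_\ell$ lying inside $\Omega_j$. Concretely, I would work with the characterisation $\bm{U}=\bm{W}$ from Theorem \ref{thm:equivalent space}. Since $\bu_j^n\in\bm{W}(\Omega_j)$, the functions $\bu_j^n$ and $\curl\bu_j^n$ are in $\bH^{1/2}(\Omega_j)$. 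Multiplying by the Lipschitz function $\chi_j$ preserves $H^{1/2}$ (by \cite[Theorem 1.4.1.1]{Gr:85}), and $\curl(\chi_j\bu_j^n)=\chi_j\curl\bu_j^n+\nabla\chi_j\times\bu_j^n$.

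The obstacle is that $\nabla\chi_j\in L^\infty$ only, so $\nabla\chi_j\times\bu_j^n$ need not be $H^{1/2}$. For the same reason, $\curl(\mu^{-1}\curl(\chi_j\bu_j^n))$ need not be $L^2$. To handle this, I would avoid working with the product as a whole. On the relevant part $\Gamma:=\partial\Omega_\ell\cap\Omega_j$, I would take the trace of $\chi_j$ and of $\bu_j^n$ separately: $\bu_j^n$ and $\curl\bu_j^n$ have $L^2$ tangential traces on $\Gamma$ by Corollary \ref{cor:key1} applied to the Lipschitz subdomain $\Omega_\ell\cap\Omega_j$ (assuming it is Lipschitz). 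Then I would write
\[
\curl(\chi_j\bu_j^n)\times\bn_\ell=\chi_j\,\curl\bu_j^n\times\bn_\ell+(\nabla\chi_j\times\bu_j^n)\times\bn_\ell .
\]
Here the second term needs $\bu_j^n\in L^2(\Gamma)$ in full, not only tangentially. This holds since Theorem \ref{thm:Costabel}(a) gives $\bn\cdot\bu_j^n\in L^2$ on the boundary of $\Omega_\ell\cap\Omega_j$. It also needs a well-defined bounded trace of $\nabla\chi_j$, which requires mild extra regularity of $\chi_j$ near $\Gamma$ (for instance, $\chi_j$ piecewise $C^1$ up to $\Gamma$); I expect to have to assume this or observe that it holds for standard partitions of unity. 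The parts of $\partial\Omega_\ell$ where $\chi_j$ vanishes contribute nothing, by the support property \eqref{eq:chi}.

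Summing over $j$ then shows $\bg_{T,\ell}\in\bL^2_T(\partial\Omega_\ell)$, with norm bounded by $C\|\bm{\mathfrak{u}}^n\|_{\mathbb{U}}+\|\bg_T\|_{0,\partial\Omega}$. Lemma \ref{lem:well pose} then completes the argument. The delicate point I would check first is the treatment of the $\nabla\chi_j$ terms at the interface.
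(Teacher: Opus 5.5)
Your final route is the paper's proof: expand the impedance trace of $\sum_j\chi_j\bu_j^n$ by the product rule, control $\chi_j(\mu^{-1}\curl\bu_j^n-\ri k\widetilde\lambda\,\bn_\ell\times\bu_j^n)\times\bn_\ell$ via Lemma \ref{lem:bound imp}(i)/Corollary \ref{cor:key1}, control $(\nabla\chi_j\times\bu_j^n)\times\bn_\ell$ via the full $L^2$ trace of $\bu_j^n$ from Theorem \ref{thm:Costabel}, and then apply Lemma \ref{lem:well pose} on each $\Omega_\ell$. The extra hypotheses you flag are used tacitly in the paper as well: $\difdiv\mathbf f\in L^2$ is needed for Lemma \ref{lem:well pose}, and a trace of $\nabla\chi_j$ on the interface is needed at the step where the paper writes $\chi_j\in C^{1,1}$, so your caution there is justified rather than a gap in your argument.
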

\begin{proof}
By the definition of $\mathbb{U}$ \eqref{eq:mathbbU}, 
we need to show that $\bu_\ell^{n+1}\in \bm{U}(\Omega_\ell)$ for all $\ell=1,\ldots, N$. Since $\bu_\ell^{n+1}$ satisfies the PDE \eqref{eq:iterative-strong-local}, by Lemma \ref{lem:well pose}, it is sufficient to show that 
$(\mu^{-1}\curl\bu^{n} -\ri k\widetilde{\lambda} \bn_{\ell}\times \bu^{n})\times\bn_{\ell} \in \bL^2_T(\partial \Omega_\ell\setminus \partial\Omega)$.
Since $\bu^n:= \sum_{j=1}^N \chi_j \bu_j^n$,
    \begin{align}\nonumber
        &(\mu^{-1}\curl\bu^{n} -\ri k\widetilde{\lambda} \bn_{\ell}\times \bu^{n})\times\bn_{\ell}\\ \nonumber
        &\qquad=\bigg(\mu^{-1}\curl\bigg(\sum_{j=1}^N \chi_j \bu_j^n\bigg) -\ri k\widetilde{\lambda} \bn_{\ell}\times \bigg(\sum_{j=1}^N \chi_j \bu_j^n\big)\big)\times\bn_{\ell}\\  \nonumber
        &\qquad= \sum_{j=1}^N \bigg(\mu^{-1}(\nabla\chi_j\times\bu_j^n + \chi_j\curl\bu_j^n) -\ri k\widetilde{\lambda} \bn_{\ell}\times (\chi_j \bu_j^n)\bigg)\times\bn_{\ell}\\
        &\qquad= \sum_{j=1}^N \chi_j(\mu^{-1}\curl\bu_j^n -\ri k\widetilde{\lambda} \bn_{\ell}\times \bu_j^n)\times\bn_{\ell} + \sum_{j=1}^N \mu^{-1}(\nabla\chi_j\times\bu_j^n)\times\bn_{\ell}.
        \label{e:lastDay1}
    \end{align}
By Lemma \ref{lem:bound imp}(i),
$\bu_j^n \in \bm{U}(\Omega_j)$ has an $L^2$ impedance trace on any Lipschitz interface in $\Omega_j$; therefore $(\mu^{-1}\curl\bu_j^n -\ri k\widetilde{\lambda} \bn_{\ell}\times \bu_j^n)\times\bn_{\ell}\in \bL^2_T(\partial \Omega_\ell\setminus \partial\Omega).$

To deal with the second term on the right-hand side of \eqref{e:lastDay1}, first observe that, by assumption, $\bu_j^n \in \bU(\Omega_j)= \bW(\Omega_j)$ by Theorem \ref{thm:equivalent space}, so that $\bu_j^n \in \bH^{1/2}(\Omega_j)$ by the definition of $\bW(\Omega_j)$ \eqref{e:W}. Thus, for any $\ell$ such that $\partial \Omega_\ell \cap \Omega_j\neq \emptyset$, Theorem \ref{thm:Costabel} implies that $\bu_j^n \in \bL^2_T(\partial\Omega_\ell\setminus \partial\Omega)$ (since  
$\bu_j^n=(\bn_\ell\times \bu_j^n)\times \bn_\ell + (\bn_\ell\cdot \bu_j^n) \bn_\ell$ on $\partial \Omega_\ell\setminus \partial\Omega$).
Since $\chi_j\in C^{1,1}(\overline{\Omega_j})= W^{1,\infty}(\Omega_j)$, $\nabla\chi_j\in L^\infty$ and thus 
$(\nabla\chi_j\times\bu_j^n)\times\bn_{\ell}\in \bL^2_T(\partial \Omega_\ell\setminus \partial\Omega)$.

 We have therefore shown that 
$(\mu^{-1}\curl\bu^{n} -\ri k\widetilde{\lambda} \bn_{\ell}\times \bu^{n})\times\bn_{\ell} \in \bL^2(\partial \Omega_\ell\setminus \partial\Omega)$ and the proof is complete.
\end{proof}

The second well-posedness result requires a regularity assumption.

\begin{assumption}[Regularity of $\bu_\ell^{n+1}$]\label{ass:regularity}
$\{\Omega_\ell\}_{\ell=1}^N, \mu, \epsilon, \lambda,$ $\bg_T$, and $\{\chi_\ell\}_{\ell=1}^N$ are such that the solution $\bu_\ell^{n+1}$ of \eqref{eq:iterative-strong-local} is such that $(\nabla \chi_\ell \cdot \nabla) \bu_\ell^{n+1} \in \bL^2(\Omega_\ell)$.  
    \end{assumption}

We first prove a well-posedness result under Assumption \ref{ass:regularity}, and then discuss why Assumption \ref{ass:regularity} is required and when we expect it to hold (see Remark \ref{rem:commutator1}, 
Conjecture \ref{con:regularity}, and the associated discussion below). 

\begin{theorem}[Well-posedness of the parallel Schwarz method in $\bU(\Omega)$ under Assumption \ref{ass:regularity}]
\label{thm:wellpose2}
Let  $\mu, \epsilon, \mathbf{f},$ and $\mathbf{g}$ be as in the Maxwell interior impedance problem of Definition \ref{def:IIP}.
Suppose, additionally, that
\begin{itemize}
\item each $\chi_\ell \in C^{1,1}(\overline{\Omega_\ell})$ for all $\ell=1,\ldots, N$,
\item $\mu,\epsilon\in W^{1,\infty}(\Omega)$, 
\item $\mathbf{f}\in \bH(\difdiv,\Omega)$, 
\item
$\{\Omega_\ell\}_{\ell=1}^N, \mu, \epsilon, \lambda,$ $\bg_T$, and $\{\chi_\ell\}_{\ell=1}^N$ are such that Assumption \ref{ass:regularity} holds, \item each $\Omega_\ell$ is simply connected with connected boundary, and
\item $\widetilde{\lambda}$ is as in Definition \ref{def:Schwarz}.
\end{itemize}
Then, given $\bu^n\in \bU(\Omega)$, $\bu^{n+1}$ defined by Definition \ref{def:Schwarz} is in $ \bU(\Omega)$.
\end{theorem}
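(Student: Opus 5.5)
We need to show that $\bu^{n+1}=\sum_\ell \chi_\ell \bu_\ell^{n+1}$ lies in $\bU(\Omega)$, i.e.\ that $\bu^{n+1}$ and its curl, divergence, $\curl(\mu^{-1}\curl\,\cdot)$, tangential trace and $\curl\bu^{n+1}\times\bn$ on $\partial\Omega$ are all in $L^2$. The first step is to observe that, since $\bu^n\in\bU(\Omega)$, Corollary \ref{cor:key1} gives $\bu^n|_{\Omega_\ell}\in\bU(\Omega_\ell)$. Lemma \ref{lem:bound imp}(i) then shows that the impedance data $(\mu^{-1}\curl\bu^n-\ri k\widetilde\lambda\,\bn_\ell\times\bu^n)\times\bn_\ell$ lies in $\bL^2_T(\partial\Omega_\ell\setminus\partial\Omega)$. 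Since $\difdiv\mathbf f\in L^2$, Lemma \ref{lem:well pose}, applied on $\Omega_\ell$ with this boundary data, yields $\bu_\ell^{n+1}\in\bU(\Omega_\ell)$. By Theorem \ref{thm:equivalent space} we then also have $\bu_\ell^{n+1}\in\bW(\Omega_\ell)$, so $\bu_\ell^{n+1},\curl\bu_\ell^{n+1}\in\bH^{1/2}(\Omega_\ell)$.

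Next we verify each component of the $\bU(\Omega)$-norm term by term, using $\chi_\ell\in C^{1,1}$ and $\operatorname{supp}\chi_\ell\subset\overline{\Omega_\ell}$ to extend each $\chi_\ell\bu_\ell^{n+1}$ by zero. The zeroth-order terms are straightforward:
\begin{itemize}
\item $\chi_\ell\bu_\ell^{n+1}\in\bL^2$;
\item $\curl(\chi_\ell\bu_\ell^{n+1})=\nabla\chi_\ell\times\bu_\ell^{n+1}+\chi_\ell\curl\bu_\ell^{n+1}\in\bL^2$;
\item $\difdiv(\chi_\ell\bu_\ell^{n+1})=\nabla\chi_\ell\cdot\bu_\ell^{n+1}+\chi_\ell\difdiv\bu_\ell^{n+1}\in L^2$.
\end{itemize}
For the boundary terms on $\partial\Omega\cap\partial\Omega_\ell$, the trace of $\chi_\ell\bu_\ell^{n+1}$ is $\chi_\ell$ times an $\bL^2$ trace. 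The term $\curl(\chi_\ell\bu_\ell^{n+1})\times\bn$ equals $\chi_\ell\curl\bu_\ell^{n+1}\times\bn+(\nabla\chi_\ell\times\bu_\ell^{n+1})\times\bn$. The second piece is in $\bL^2$ because $\bu_\ell^{n+1}\in\bH^{1/2}$ has full $\bL^2$ boundary trace (Theorem \ref{thm:Costabel}, as in the proof of Theorem \ref{thm:wellpose}).

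The main obstacle is the second-order term $\curl(\mu^{-1}\curl(\chi_\ell\bu_\ell^{n+1}))$. Expanding gives
\[
\curl(\mu^{-1}\curl(\chi_\ell\bu_\ell^{n+1}))=\chi_\ell\curl(\mu^{-1}\curl\bu_\ell^{n+1})+\nabla\chi_\ell\times(\mu^{-1}\curl\bu_\ell^{n+1})+\curl\big(\mu^{-1}\nabla\chi_\ell\times\bu_\ell^{n+1}\big).
\]
The first two terms are in $\bL^2$. For the last, write $\curl(\mu^{-1}\bw)=\nabla\mu^{-1}\times\bw+\mu^{-1}\curl\bw$ with $\bw=\nabla\chi_\ell\times\bu_\ell^{n+1}$, and use the identity
\[
\curl(\mathbf a\times\bu)=\mathbf a\,\difdiv\bu-\bu\,\difdiv\mathbf a+(\bu\cdot\nabla)\mathbf a-(\mathbf a\cdot\nabla)\bu,\qquad \mathbf a=\nabla\chi_\ell .
\]
Here $\mathbf a\in W^{1,\infty}$ because $\chi_\ell\in C^{1,1}$, and $\mu\in W^{1,\infty}$. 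The terms $\mathbf a\,\difdiv\bu_\ell^{n+1}$, $\bu_\ell^{n+1}\difdiv\mathbf a$ and $(\bu_\ell^{n+1}\cdot\nabla)\mathbf a$ are therefore in $\bL^2$, using $\difdiv\bu_\ell^{n+1}\in L^2$ from $\bU(\Omega_\ell)$. The only remaining term is $(\nabla\chi_\ell\cdot\nabla)\bu_\ell^{n+1}$, which is in $\bL^2(\Omega_\ell)$ precisely by Assumption \ref{ass:regularity}. This term is the obstruction, since only $\bu_\ell^{n+1}\in\bH^{1/2}$ is known in general.

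Finally, summing over $\ell$ gives every term of \eqref{eq:normU} on $\Omega$ finite, so $\bu^{n+1}\in\bU(\Omega)$. One remaining point is that the extension by zero of $\chi_\ell\bu_\ell^{n+1}$ must have no jump contributions across $\partial\Omega_\ell\setminus\partial\Omega$. This follows if $\chi_\ell$ and $\nabla\chi_\ell$ vanish there, which is implicit in $\chi_\ell\in C^{1,1}$ with support in $\overline{\Omega_\ell}$ (interpreted as the $C^{1,1}$ zero extension). I would state this interpretation explicitly in the proof.
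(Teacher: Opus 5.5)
Your proposal is correct and follows the paper's proof in all essentials. The shared steps are: well-posedness of the local problem in $\bm{U}(\Omega_\ell)$ via Lemma~\ref{lem:bound imp}(i) and Lemma~\ref{lem:well pose}; reduction by linearity to each $\chi_\ell\bu_\ell^{n+1}$; and the expansion of $\curl(\mu^{-1}\curl(\chi_\ell\bu_\ell^{n+1}))$ through the identity for $\curl(\nabla\chi_\ell\times\bu_\ell^{n+1})$, where Assumption~\ref{ass:regularity} controls exactly the term $(\nabla\chi_\ell\cdot\nabla)\bu_\ell^{n+1}$.

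The only difference is how you handle the boundary-trace conditions of $\bm{U}(\Omega)$. The paper shows that $\chi_\ell\bu_\ell^{n+1}$ and its curl lie in $\bH^{1/2}(\Omega)$ and then invokes $\bm{U}(\Omega)=\bm{W}(\Omega)$ (Theorem~\ref{thm:equivalent space}), so the $L^2$ traces come for free. You instead check them directly on $\partial\Omega\cap\partial\Omega_\ell$ via the product rule and Theorem~\ref{thm:Costabel}(a). Your explicit requirement that $\chi_\ell$ and $\nabla\chi_\ell$ vanish on $\partial\Omega_\ell\setminus\partial\Omega$ is one the paper also needs but uses only implicitly (``by extension by zero'').
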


\begin{proof}
By Lemma \ref{lem:bound imp}(i) and the fact that $\bu^n\in \bU(\Omega)$ (by assumption), the impedance data in the problem \eqref{eq:iterative-strong-local} defining $\bu_\ell^{n+1}$ is in $\bL^2(\partial \Omega_j)$.
We now apply Lemma \ref{lem:well pose} to obtain that $\bu_\ell^{n+1}\in \bU(\Omega_j)$; the assumptions of Lemma \ref{lem:well pose} are satisfied with $\Omega$ replaced by $\Omega_j$ by the fact about the impedance data noted above, along with the assumptions that $\difdiv {\bf f}\in L^2(\Omega)$,
$\mu,\epsilon\in W^{1,\infty}(\Omega)$, and $\widetilde{\lambda}$ is as in Definition \ref{def:Schwarz}.

We now need to show that $\bu_\ell^{n+1}:= \sum_{\ell=1}^N \chi_\ell \bu_\ell^{n+1}$ is in $\bU(\Omega)$. 
By linearity, it is sufficient to show that $\chi_\ell \bu_\ell^{n+1}\in \bU(\Omega)$ for all $\ell=1,\ldots,N$.

Since $\mu\in W^{1,\infty}(\Omega_j)$, $\bU(\Omega_j)=\bW(\Omega_j)$ by Theorem \ref{thm:equivalent space}, and thus $\bu_\ell^{n+1}$ and $\curl \bu_\ell^{n+1}$ are both in $\bH^{1/2}(\Omega_j)$. 
Recall that the product of a $C^{0,1}$ function and an $H^{1/2}$ function is in $H^{1/2}$ by, 
e.g., \cite[Theorem 1.4.1.1]{Gr:85}.
Thus, since $\chi$ and $\nabla\chi$ are both $C^{0,1}$, $\chi_\ell \bu_\ell^{n+1}$ and $\curl (\chi_\ell \bu_\ell^{n+1}) = \nabla \chi_\ell \times \bu_\ell^{n+1} + \chi_\ell \curl \bu_\ell^{n+1}$ are both in $\bH^{1/2}(\Omega_\ell)$ and hence (by extension by zero) in $\bH^{1/2}(\Omega)$. By  
 Theorem \ref{thm:equivalent space} and the definition of $\bW(\Omega)$ \eqref{e:W}, it therefore remains to show that 
$$
\curl\bigl(\mu^{-1}\curl(\chi_\ell \bu_\ell^{n+1})\bigr)\in \bL^2(\Omega)\quad\text{ and } \quad
\difdiv(\chi_\ell \bu_\ell^{n+1}) \in L^2(\Omega).
$$
Since $\chi_\ell$ is supported in $\Omega_\ell$, 
it is sufficient to show that 
$$
\curl\bigl(\mu^{-1}\curl(\chi_\ell \bu_\ell^{n+1})\bigr)\in \bL^2(\Omega_\ell)\quad\text{ and } \quad
\difdiv(\chi_\ell \bu_\ell^{n+1}) \in L^2(\Omega_\ell).
$$
For the condition on the divergence, 
taking the divergence of the PDE in \eqref{eq:iterative-strong-local} and using
the assumption that $\difdiv\mathbf{f}\in L^2(\Omega)$, we see that  
$\difdiv(\epsilon \bu_\ell^{n+1}) \in L^2(\Omega)$; since $\epsilon \geq \epsilon_{\min}>0$ by \eqref{eq:coe_bound}, this implies that  $\difdiv \bu_\ell^{n+1} \in L^2(\Omega)$
and then $\difdiv(\chi_\ell \bu_\ell^{n+1}) \in L^2(\Omega)$ by the product rule for differentiation and the fact that $\chi \in C^{0,1}(\overline{\Omega})=W^{1,\infty}(\Omega)$. 
Now 
\begin{align*}
\quad\curl\bigl(\mu^{-1}\curl(\chi_\ell\bu_\ell^{n+1})\bigr)
&=\curl\bigl(\chi_\ell\mu^{-1}\curl\bu_\ell^{n+1}\bigr)
+
\curl\bigl(\mu^{-1}\nabla\chi_\ell\times\bu_\ell^{n+1}\bigr)  \\
 &\hspace{-0.5cm}=
\nabla\chi_\ell\times\bigl(\mu^{-1}\curl\bu_\ell^{n+1}\bigr)
+
\chi_\ell\curl\bigl(\mu^{-1}\curl\bu_\ell^{n+1}\bigr) 
+
\nabla\mu^{-1}\times
\bigl(\nabla\chi_\ell\times\bu_\ell^{n+1}\bigr)
\\
&\hspace{3cm}+
\mu^{-1}\curl\bigl(\nabla\chi_\ell\times\bu_\ell^{n+1}\bigr).
\end{align*}
The PDE \eqref{eq:iterative-strong-local} implies that $\curl\bigl(\mu^{-1}\curl\bu_\ell^{n+1}\bigr)\in \bL^2(\Omega_\ell)$. Since $\bu_\ell^{n+1}, \curl \bu_\ell^{n+1}\in \bL^2(\Omega)$, to show that 
$\curl(\mu^{-1}\curl(\chi_\ell\bu_\ell^{n+1}))\in \bL^2(\Omega)$ we only need to show that 
$$\curl\bigl(\nabla\chi_\ell\times\bu_\ell^{n+1}\big)
=
(\bu_\ell^{n+1}\cdot\nabla) \nabla\chi_\ell 
- (\nabla\chi_\ell\cdot\nabla)\bu_\ell^{n+1} + (\nabla \cdot \bu_{\ell}^{n+1})\nabla\chi_\ell - (\Delta\chi_\ell)\bu_\ell^{n+1}
$$
is in $\bL^2(\Omega)$. 
However, 
since $\nabla \chi_\ell \in C^{0,1}(\overline{\Omega})= W^{1,\infty}(\Omega)$, 
and by Assumption \ref{ass:regularity},
all the terms in the last displayed equation are in $\bL^2(\Omega)$ and the proof is complete.
\end{proof}

\begin{remark}[The origin of Assumption \ref{ass:regularity}]
\label{rem:commutator1}
In the proof of Theorem \ref{thm:wellpose2}, we saw that 
\begin{equation}\label{e:commutator}
\curl\bigl(\mu^{-1}\curl(\chi_\ell\bu_\ell^{n+1})\bigr)- \chi_\ell \curl \big(\mu^{-1}\curl \bu_\ell^{n+1}\big)
\end{equation}
involved $(\nabla\chi_\ell \cdot\nabla) \bu_\ell^{n+1}$ --  i.e., first derivatives of $\bu_\ell^{n+1}$ -- and such derivatives are not controlled by $\bu_\ell^{n+1}$ belonging to $\bU(\Omega_\ell)$.

We highlight that this issue is not present in the Helmholtz setting, since the Helmholtz analogue of $\bU(\Omega)$, $U(\Omega):=\{ u\in H^1(\Omega): \Delta u\in L^2(\Omega), \partial u/\partial n\in L^2(\partial\Omega)\}$, contains $H^1(\Omega)$, and so controls the first derivatives in  
$\Delta (\chi_\ell u_\ell^{n+1})- \chi_\ell \Delta u_\ell^{n+1}$ (see \cite[Lemma 2.11]{gong2022convergence}, with the well-posedness result in $\bm{U}(\Omega)$ then \cite[Theorem 2.12]{gong2022convergence}).
\end{remark}

\paragraph{Discussion on when we expect Assumption \ref{ass:regularity} to hold.}
Without specific assumptions on the geometry of the subdomains and the partition of unity $\{\chi_\ell\}_{\ell=1}^N$, the term  
$(\nabla\chi_\ell \cdot\nabla) \bu_\ell^{n+1}$ 
involves arbitrary first derivatives of $\bu_\ell^{n+1}$.
However, $\bH^1$ regularity of Maxwell solutions requires that the impedance data be $\bH^{1/2}$ 
(since the impedance boundary condition involves the tangential trace, which is in $\bH^{1/2}$ if the function is in $\bH^1$) and this will not be the case for $\bu_\ell^{n+1}$ because of the lack of compatibility 
between $\bg_T$ and the impedance trace of $\bu^n$ 
in \eqref{eq:iterative-strong-local} when $\partial \Omega_\ell\setminus \partial\Omega$ meets $\partial\Omega_\ell \cap \partial\Omega$.

\begin{conjecture}\label{con:regularity}
Suppose that
(i) 
$$\Omega :=\big\{ (x_1,x_2,x_3) \in \mathbb{R}^3 \,|\, x_1 \in [0,L],\,(x_2,x_3)\in \Omega_p \big\},
$$
where $\Omega_p\subset \mathbb{R}^2$ is a Lipschitz polygon, (ii) each $\chi_\ell$ is a function of only the $x_1$ variable, and (iii) each $\chi_\ell$ is zero in a neighbourhood of $\partial\Omega_\ell\setminus \partial\Omega$.
Then Assumption \ref{ass:regularity} holds.
\end{conjecture}

The assumption in 
Conjecture \ref{con:regularity} that each $\chi_\ell$ is a function of only the $x_1$ variable essentially restricts attention to the strip decompositions studied from \S\ref{sec:strip} onward, where $\Omega_\ell$ is only overlapped by $\Omega_{\ell-1}$ and $\Omega_{\ell+1}$ (strictly speaking, Conjecture \ref{con:regularity} allows more overlaps, but in practice these would be redundant). 

Proving Conjecture \ref{con:regularity} is beyond the scope of this paper, but we now outline why we expect it to hold. 

In the set up of Conjecture \ref{con:regularity}, 
each 
${\rm supp}(\nabla\chi_\ell)$ is an annular-type region whose boundary intersects $\partial\Omega_\ell$ only on $\partial\Omega_\ell\cap\partial\Omega$; i.e., in this region (where we care about the regularity of $\bu_\ell^{n+1}$) the impedance data for $\bu_\ell^{n+1}$ only comes from $\bg_T$ (and so there is not the matching issue described above).

The Maxwell solution $\bu_\ell^{n+1} \in \bL^2(\Omega_\ell)$ by wellposedness of the interior impedance problem (Lemma \ref{lem:stability}). $\bu_\ell^{n+1}$ has weak singularities near the edges of $\Omega_\ell$ created by the corners of the 2-d polygon $\Omega_p$; i.e., $\bu_\ell^{n+1} \sim r^\alpha$ for $\alpha$ depending on $\Omega_p$, where $r$ is the distance to the corner of $\Omega_p$ measured in polar coordinates in the $(x_2,x_3)$ plane. 
However, one expects the derivatives of $\bu_\ell^{n+1}$ in the $x_1$ direction to have the same regularity as $\bu_\ell^{n+1}$ itself (since differentiating in the $x_1$ direction will not affect the $r^\alpha$ asymptotics); i.e., we expect $(\nabla \chi_\ell \cdot\nabla)\bu_\ell^{n+1} \in \bL^2(\Omega_\ell)$.

For perfect electric conductor boundary conditions, singularities of Maxwell solutions were famously characterised in terms of singularities of Laplace solutions in \cite{costabel2000singularities, costabel1999singularities}, and global Maxwell regularity results were obtained from corresponding Laplace ones in, e.g., \cite[\S4.5d]{CoDaNi:10}, \cite[Theorem 5.5.5]{moiola2011trefftz}. We expect that these type of arguments can also be used to prove Conjecture \ref{con:regularity}.

\subsection{A norm on the space \texorpdfstring{$\bm{U}_0(D)$}{U_0(D)}}

The following result assumes that the impedance parameter $\lambda$ is constant; recall that this is natural when the impedance boundary condition is considered as a first-order approximation to the radiation condition. Without loss of generality, we set $\lambda$ to one.

\begin{theorem}[Norm on $\bm{U}_0(D)$]\label{thm:equivalent norm}
   Suppose that $\mu$ and $\epsilon$ satisfy the assumptions of Lemma \ref{lem:stability}, and, additionally, $\lambda=1$. Then
\begin{equation*}
    \|\bu\|^2_{1,k,\partial D}:=\|\mu^{-1}\curl\bu\times\bn\|^2_{0,\partial D}+k^2\|\bu_T\|^2_{0,\partial D}
\end{equation*}
is  a norm on $\bm{U}_0(D)$
equivalent to $\|\cdot\|_{\bm{U}(D)}$ \eqref{eq:normU}.
Furthermore,
\begin{equation}\label{eq:=boundary}
    \|\bu\|_{1,k,\partial D}^2=\|\pm\mu^{-1}\curl\bu\times\bn-ik\bu_T\|_{0,\partial D}^2.
\end{equation}
\end{theorem}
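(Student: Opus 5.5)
The plan has three parts. First, the identity \eqref{eq:=boundary} is a pointwise computation plus one integration by parts. Second, the bound of $\|\cdot\|_{1,k,\partial D}$ by $\|\cdot\|_{\bm{U}(D)}$ is immediate. Third, the reverse bound comes from the identity together with the stability estimate \eqref{eq:stabilityU}.

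For the identity, write $\mathbf{a}:=\mu^{-1}\curl\bu\times\bn$ and $\mathbf{b}:=\bu_T$, both in $\bL^2_T(\partial D)$. Expanding gives
\[
\|\pm\mathbf{a}-\ri k\mathbf{b}\|_{0,\partial D}^2=\|\mathbf{a}\|_{0,\partial D}^2+k^2\|\mathbf{b}\|_{0,\partial D}^2\mp 2k\,\Re\big(\ri\langle \mathbf{b},\mathbf{a}\rangle_{\partial D}\big).
\]
It therefore suffices to show that $\langle \mathbf{a},\mathbf{b}\rangle_{\partial D}$ is real for $\bu\in\bm{U}_0(D)$. To see this, apply the Green formula for $\curl$ with $\bv=\bu$:
\[
(\mu^{-1}\curl\bu,\curl\bu)_D-(\curl(\mu^{-1}\curl\bu),\bu)_D=\pm\langle \mu^{-1}\curl\bu\times\bn,\bu_T\rangle_{\partial D},
\]
with the sign fixed by the orientation convention. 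Since $\bu\in\bm{U}_0(D)$, we may substitute $\curl(\mu^{-1}\curl\bu)=k^2\epsilon\bu$. The left-hand side then becomes $(\mu^{-1}\curl\bu,\curl\bu)_D-k^2(\epsilon\bu,\bu)_D$, which is real because $\mu,\epsilon$ are real. This proves \eqref{eq:=boundary} for both choices of sign.

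The Green formula needs justification here, because $\bu$ only has $L^2$ tangential traces and we are not working with smooth functions. The traces $\curl\bu\times\bn$ and $\bu_T$ lie in $\bL^2_T(\partial D)$ by the definition of $\bm{U}(D)$, and by Theorem \ref{thm:equivalent space} both $\bu$ and $\curl\bu$ lie in $\bH^{1/2}(D)$. This regularity is enough to interpret the boundary pairing in the $H^{-1/2}$--$H^{1/2}$ duality of $\bH(\curl)$ traces, and that pairing agrees with the $L^2$ pairing whenever both traces are in $L^2$. We will argue this by density of smooth fields, or by citing \cite[\S3.5]{monk2003finite}. Note that this step uses $\mu\in W^{1,\infty}$ through Theorem \ref{thm:equivalent space}.

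For the upper bound $\|\bu\|_{1,k,\partial D}\le C\|\bu\|_{\bm{U}(D)}$, use $\mu\ge\mu_{\min}$: both boundary terms appear directly in \eqref{eq:normU}.

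For the lower bound, apply Lemma \ref{lem:well pose} on $D$ with $\mathbf{f}=\bm{0}$, impedance parameter $1$, and $\bg_T=\mu^{-1}\curl\bu\times\bn-\ri k\bu_T$. The function $\bu$ solves this problem, and by uniqueness it is the solution, so \eqref{eq:stabilityU} gives
\[
\|\bu\|_{\bm{U}(D)}\le C\|\mu^{-1}\curl\bu\times\bn-\ri k\bu_T\|_{0,\partial D}=C\|\bu\|_{1,k,\partial D},
\]
where the equality is \eqref{eq:=boundary}. Together with the upper bound this gives the norm equivalence, and definiteness follows from it.

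I expect the main obstacle to be the rigorous justification of the Green identity for functions with only $L^2$ boundary traces on a Lipschitz boundary. A secondary point is that Lemma \ref{lem:well pose} must be applicable on $D$ itself, which requires $D$ to satisfy the hypotheses of Lemma \ref{lem:stability}; I will assume this, as the statement implicitly does.
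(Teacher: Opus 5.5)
Your proposal is correct and follows the paper's proof. The paper also tests the equation against $\overline{\bu}$ and integrates by parts to show that the boundary pairing $\langle \mu^{-1}\curl\bu\times\bn,\bu_T\rangle_{\partial D}$ is real, expands the square to get \eqref{eq:=boundary}, and then takes the upper bound from the definition of $\|\cdot\|_{\bm{U}(D)}$ and the lower bound from \eqref{eq:stabilityU} with $\mathbf{f}=\bm{0}$.

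Your one addition is justifying the Green formula, which the paper simply assumes. If you justify it through Theorem~\ref{thm:equivalent space}, you need $\mu\in W^{1,\infty}(D)$, which is more than the piecewise regularity in the hypotheses. The $\bH(\curl)$ trace-duality and density argument you mention as an alternative avoids that extra assumption, so it is the better choice.
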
    \begin{proof}
        For $\bu\in \bm{U}_0(D)$ and $\lambda=1$, 
\begin{equation*}
            \begin{aligned}
                0=&\int_D(\curl(\mu^{-1}\curl\bu)-k^2\epsilon\bu)\cdot\overline{\bu}\\ 
                =&\int_D(\mu^{-1}\curl\bu\cdot\curl\overline{\bu}-k^2\epsilon|\bu|^2)+\int_{\partial D} \bn\times\mu^{-1}\curl\bu\cdot\overline{\bu}_T\\ 
=&\int_D(\mu^{-1}|\curl\bu|^2-k^2\epsilon|\bu|^2)+\int_{\partial D} \bn\times\mu^{-1}\curl\bu\cdot\overline{\bu}_T,
            \end{aligned}
         \end{equation*}
         so that
         \begin{equation*}
            \Im\left(\int_{\partial D} \bn\times\mu^{-1}\curl\bu\cdot\overline{\bu}_T\right)=0.
         \end{equation*}
         Thus, 
         \begin{equation*}
            \begin{aligned}
                \|\pm\mu^{-1}\curl\bu\times\bn-ik\bu_T\|_{0,\partial D}^2&=\int_{\partial D}(|\mu^{-1}\curl\bu\times\bn|^2+k^2|\bu_T|^2)\\ 
                &\quad\quad\mp2k\Im\left(\int_{\partial D} \mu^{-1}\curl\bu\times\bn\cdot\overline{\bu}_T\right)\\
                &=\|\mu^{-1}\curl\bu\times\bn\|^2_{0,\partial D}+k^2\|\bu_T\|^2_{0,\partial D}.
            \end{aligned}
         \end{equation*}
         For the norm equivalence, by the triangle inequality and the definition of $\|\cdot\|_{\bm{U}(D)}$, 
         \begin{equation*}
            \begin{aligned}
\|\bu\|_{1,k,\partial D}^2&=\|\mu^{-1}\curl\bu\times\bn-ik\bu_T\|_{0,\partial D}^2\\&\leq\|\mu^{-1}\curl\bu\times\bn\|^2_{0,\partial D}+k^2\|\bu_T\|^2_{0,\partial D}\leq C \|\bu\|_{\bm{U}(D)}^2.
            \end{aligned}
         \end{equation*}
         Furthermore, for $\bu \in \bm{U}_0(D)$, by \eqref{eq:stabilityU},
         \begin{equation*}
    \|\bu\|_{\bm{U}(D)}^2\leq C\|\mu^{-1}\curl\bu\times\bn-ik\bu_T\|_{0,\partial D}^2=C\|\bu\|_{1,k,\partial D}^2.
         \end{equation*}
     \end{proof}
Given the norm $\|\cdot\|_{1,k,\partial D}$ on $\bm{U}_0(D)$,
     we define a norm on $\mathbb{\bm{U}}_0$ by
     \begin{equation}\label{eq:normU0}
\|\mathfrak{u}\|_{\mathbb{U}_0}^2:=\sum_{\ell=1}^N\|\bu_\ell\|_{1,k,\partial\Omega_\ell}^2.
     \end{equation}

\section{The error propagation operator and impedance-to-impedance map}\label{sec:errorprop}

\subsection{The error propagation operator}

Let $\bu$ be the solution to \eqref{eq:iip} and $\bu_\ell^n$ be
defined by \eqref{eq:iterative-strong-local} (i.e., $\bu_\ell^n$ is 
the local solution on $\Omega_\ell$ at the $n$-th iteration). Let 
\begin{equation*}
	\be_\ell^{n}:=\bu|_{\Omega_\ell} - \bu_\ell^{n};
\end{equation*}
i.e., 
$\be_\ell^{n}$ is the local error on $\Omega_\ell$. 
Then the global error is given by 
\begin{equation*}
    \be^n:=\bu-\bu^n=\sum_{\ell=1}^N\chi_\ell\bu_\ell-\sum_{\ell=1}^N\chi_\ell\bu_\ell^n=\sum_{\ell=1}^N\chi_\ell\be_\ell^n.
\end{equation*}
We denote the impedance data for $\bv\in \bm{U}(\Omega_\ell)$ by
\begin{equation}\label{eq:imp_map}
	\imp{\ell}{\bv}:=(\mu^{-1}\curl\bv-\ri k\lambda \bn_\ell\times\bv)\times\bn_\ell \quad\text{on }
    \partial\Omega_\ell
\end{equation}
With this definition, 
\eqref{eq:iip} and \eqref{eq:iterative-strong-local} imply that 
$\be_\ell^{n+1}$ satisfies
\begin{equation} \label{eq:format1}                       
            \left\{\begin{array}{ll}    
                            (\curl(\mu^{-1}\curl) - k^2\epsilon)\be_\ell^{n+1} = \bm{0} &\text{ in }\Omega_\ell,\\
                            \imp{\ell}{\be_\ell^{n+1}} =\imp{\ell}{\be^{n}}=\sum_{j=1}^N \imp{\ell}{\chi_j \be_j^n} &\text{ on }\partial\Omega_\ell\backslash\partial\Omega,\\
                            \imp{\ell}{\be_\ell^{n+1}} = \bm{0} &\text{ on }\partial\Omega_\ell\cap\partial\Omega.
          \end{array}\right.                         
\end{equation}
This motivates us to define the local error propagation operator $\mathcal{T}_{\ell,j}:\bm{U}_0(\Omega_j)\rightarrow \bm{U}_0(\Omega_\ell)$ such that for all $\bv_j\in \bm{U}_0(\Omega_j)$, $\mathcal{T}_{\ell,j}\bv_j\in \bm{U}_0(\Omega_\ell)$ is the solution to  
\begin{equation}\label{eqn:local_error}
            \left\{\begin{array}{ll}
                            (\curl(\mu^{-1}\curl) - k^2\epsilon)\mathcal{T}_{\ell,j}\bv_j = \bm{0} &\text{ in }\Omega_\ell,\\
			    \imp{\ell}{\mathcal{T}_{\ell,j}\bv_j} =\imp{\ell}{\chi_j\bv_j} &\text{ on } \partial\Omega_\ell\setminus\partial\Omega,\\
                            \imp{\ell}{\mathcal{T}_{\ell,j}\bv_j} = \bm{0} &\text{ on }\partial\Omega_\ell\cap\partial\Omega
          \end{array}\right.
\end{equation}
Comparing \eqref{eq:format1} and \eqref{eqn:local_error}, we see that
\begin{equation}\label{eq:errorsum}
    \be_{\ell}^{n+1} = \sum_{j=1}^N \cT_{\ell,j} \be_j^{n}.
\end{equation}
Observe that, from the definition \eqref{eqn:local_error} and the uniqueness of the interior impedance problem, for $\mathcal{T}_{\ell,j}$ to be non-zero, ${\rm supp}\chi_j$ must intersect $\partial\Omega_\ell$ (we use this fact below to simplify $\boldsymbol{\mathcal{T}}$ in the case of strip  decompositions; see \S\ref{sec:strip}).

We introduce the vector of errors $\mathfrak{e}^n\in \mathbb{U}_0$ defined by
  $$
  \mathfrak{e}^n := (\be_1^n, \be_2^n, \dotsc, \be_N^n)^\top,
  \quad\text{ 
  so that }\quad
  \mathfrak{e}^n = \boldsymbol{\mathcal{T}}\mathfrak{e}^{n-1},
  $$
where $\boldsymbol{\mathcal{T}}:\mathbb{U}_0\rightarrow \mathbb{U}_0$ is the error propagation matrix.




\subsection{Relationship between the error propagation operator and impedance-to-impedance maps}
\label{sec:strip}

We now show how the product of local error propagation operators is related to   the \emph{impedance-to-impedance map} between interior boundaries of neighboring subdomains.
For simplicity, we restrict attention to strip domain decompositions, defined below. 
 but we emphasise that analogous results hold for general decompositions, at the cost of more complicated interface geometries and interactions between the subdomains (see \cite[\S3.2]{gong2022convergence} and the discussion in \cite[\S6.2]{gong2022convergence}).


\begin{definition}\label{def:strip}
We say that $\{\Omega_\ell\}_{\ell=1}^N$ is a \emph{strip decomposition} if 

(i) $\overline{\Omega}=\cup_{\ell=1}^N\overline{\Omega_\ell}$ and $N\geq 2$,

(ii) each $\Omega_\ell, \ell=1,\ldots,N$,
is simply connected with connected boundary 

(iii) 
for $\ell=1,\ldots,N$, the two intersections 
$\Omega_\ell \cap \Omega_{\ell \pm 1}$ are open subsets of $\mathbb{R}^3$ and nonempty, and 

(iv) $\Omega_\ell\cap \Omega_j =\emptyset$ if $|j-\ell|>1$.

\end{definition}

Recall from the discussion under \eqref{eq:errorsum} that for $\mathcal{T}_{\ell,j}$ to be non-zero, ${\rm supp}\chi_j$ must intersect $\partial\Omega_\ell$. This implies that, for a strip decomposition, all entries of the error transfer operator 
$\boldsymbol{\mathcal{T}}$ are zero apart from $\mathcal{T}_{\ell, \ell-1}$ and $\mathcal{T}_{\ell, \ell+1}$; i.e.,
\eqref{eq:errorsum} simplifies to
\begin{equation*}
	\be_{\ell}^n = \mathcal{T}_{\ell,\ell-1}\be_{\ell-1}^{n-1} + \mathcal{T}_{\ell, \ell+1}\be_{\ell+1}^{n-1}
\end{equation*}
and  $\boldsymbol{\mathcal{T}}$ has the sparse representation
   \begin{equation}\label{eq:splitT}
  \boldsymbol{\mathcal{T}} =\begin{pmatrix}
  0               &       \mathcal{T}_{1,2} \\
  \mathcal{T}_{2,1}       &       0               &\mathcal{T}_{2,3}\\
                  &       \mathcal{T}_{3,2}       &0              &\mathcal{T}_{3,4}\\
                  &                       &       \ddots  &       \ddots& \ddots\\
                  &                       &       \mathcal{T}_{N-1,N-2}&          0       &\mathcal{T}_{N-1,N}\\
                  &                       &                       &       \mathcal{T}_{N,N-1}&0
  \end{pmatrix} :  = \boldsymbol{\mathcal{L}} +\boldsymbol{\mathcal{U}},
  \end{equation}
where $\boldsymbol{\mathcal{L}}$ and $\boldsymbol{\mathcal{U}}$ denote the lower and upper parts of $\boldsymbol{\cT}$, respectively. 

We now relate products 
$\mathcal{T}_{\ell',j}\mathcal{T}_{j,\ell}$ to an appropriate impedance-to-impedance map.
For this definition, it is convenient to use the notation that
$$\Gamma_{\ell,j}: = \partial\Omega_\ell\cap\Omega_j.$$
With this notation, 
observe that for a strip  domain decomposition (as shown in Figure \ref{fig:DD_bdry}),
\begin{equation}
    \label{eq:chi-strip}
\chi_\ell|_{\Gamma_{\ell+1,\ell}}=\chi_\ell|_{\Gamma_{\ell-1,\ell}}=1.
\end{equation}

\begin{figure}[h!]
    \centering
    \includegraphics[width=0.6\linewidth]{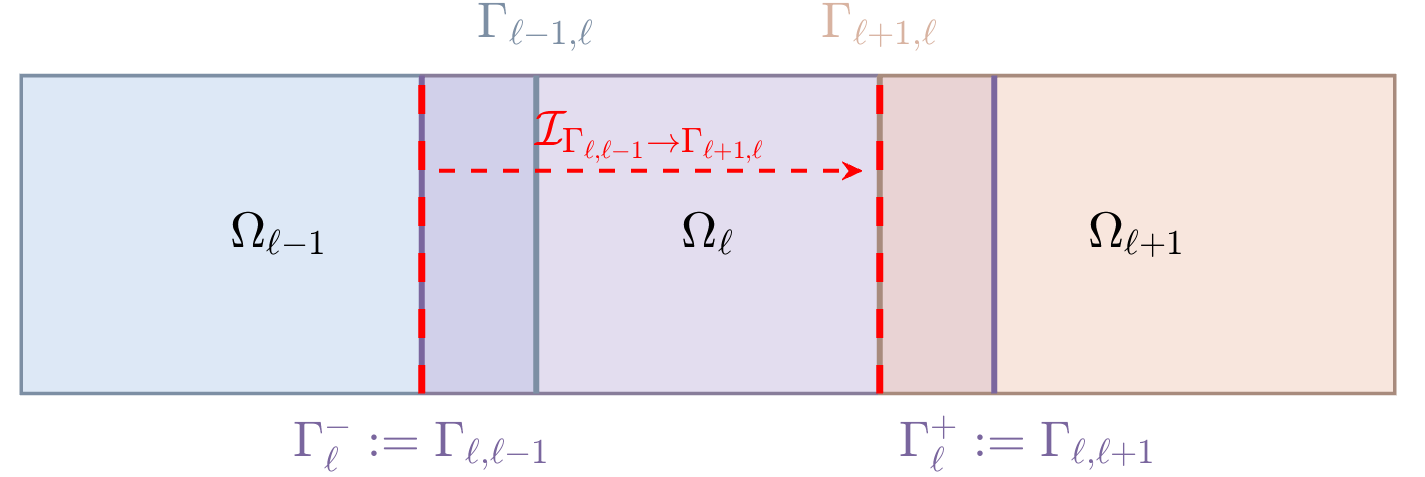}
\caption{Cross-sectional illustration of a strip decomposition (where $\Omega$ and each $\Omega_\ell$ are rectangular cuboids), showing the boundary notation and the impedance-to-impedance map.}
    \label{fig:DD_bdry}
\end{figure}


\begin{definition}\label{def:imp2imp}
	The \emph{impedance-to-impedance map} $\mathcal{I}_{\Gamma_{j,\ell}\rightarrow\Gamma_{\ell',j}}:\bL_{T}^2(\Gamma_{j,\ell})\rightarrow \bL_{T}^2(\Gamma_{\ell',j})$ is defined by 
	\begin{equation}\label{eq:imp2imp}
\mathcal{I}_{\Gamma_{j,\ell}\rightarrow\Gamma_{\ell',j}}\mathbf{g}_T:=\imp{\ell'}{\bv_j} \quad\text{on }\Gamma_{\ell',j},
	\end{equation}
	where $\bv_j\in \bm{U}_0(\Omega_j)$ is the solution of
	  \begin{equation*}
              \left\{\begin{array}{ll}
			      (\curl(\mu^{-1}\curl) - k^2\epsilon)\bv_j = \bm{0} &\text{ in }\Omega_{j},\\
			      \imp{j}{\bv_j} =\mathbf{g}_T &\text{ on }\Gamma_{j,\ell},\\
			      \imp{j}{\bv_j} = \bm{0} &\text{ on }\partial\Omega_{j}\setminus\Gamma_{j,\ell}.
            \end{array}\right.
  \end{equation*}
\end{definition}

The red dashed lines in 
in Figure~\ref{fig:DD_bdry} illustrate the map $\mathcal{I}_{\Gamma_{\ell,\ell-1}\rightarrow\Gamma_{\ell+1,\ell}}$ for a strip decomposition.

\begin{lemma}\label{lem:bdd_I}
If $\bu\in \bm{U}_0(D)$, then the impedance-to-impedance map \eqref{eq:imp2imp} is bounded as an operator from $\bL^2(\Gamma_{j,\ell})$ to $\bL^2(\Gamma_{\ell',j})$.
\end{lemma}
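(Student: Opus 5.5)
The plan is to write the impedance-to-impedance map as the composition of a solution operator with an impedance trace operator, and to bound each factor by an earlier result. Given $\mathbf{g}_T\in\bL^2_T(\Gamma_{j,\ell})$, extend it by zero to all of $\partial\Omega_j$; call the extension $\widetilde{\mathbf{g}}_T\in\bL^2_T(\partial\Omega_j)$, so that $\|\widetilde{\mathbf{g}}_T\|_{0,\partial\Omega_j}=\|\mathbf{g}_T\|_{0,\Gamma_{j,\ell}}$. Then $\bv_j$ in Definition \ref{def:imp2imp} is the solution of the interior impedance problem on $D=\Omega_j$ with $\mathbf{f}=\bm{0}$ and boundary data $\widetilde{\mathbf{g}}_T$, with the impedance coefficient $\widetilde{\lambda}$ playing the role of $\lambda$. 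Since $\mathbf{f}=\bm{0}$, we trivially have $\difdiv\mathbf{f}\in L^2$. Lemma \ref{lem:well pose}, applied with $\Omega$ replaced by $\Omega_j$ (which is simply connected with connected Lipschitz boundary by Definition \ref{def:strip}(ii)), gives a unique $\bv_j\in\bm{U}_0(\Omega_j)$ with
\begin{equation*}
\|\bv_j\|_{\bm{U}(\Omega_j)}\leq C\|\mathbf{g}_T\|_{0,\Gamma_{j,\ell}}.
\end{equation*}
In particular the map is well defined and linear.

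Next we bound the output trace. The target $\Gamma_{\ell',j}=\partial\Omega_{\ell'}\cap\Omega_j$ is part of the boundary of the Lipschitz subdomain $D':=\Omega_{\ell'}\cap\Omega_j\subset\Omega_j$. For a strip decomposition $D'$ is a nonempty open set by Definition \ref{def:strip}(iii), and we will assume it is Lipschitz, as the paper implicitly does. Also $\bn_{\ell'}$ agrees with the outward normal of $D'$ up to sign on $\Gamma_{\ell',j}$, and the sign does not affect the $L^2$ norms of $\curl\bv\times\bn$ and $\bv_T$. Lemma \ref{lem:bound imp}(i) with $D=\Omega_j$ then gives
\begin{equation*}
\|\imp{\ell'}{\bv_j}\|_{0,\Gamma_{\ell',j}}\leq\|\mu^{-1}\curl\bv_j\times\bn-\ri k\widetilde{\lambda}\,(\bv_j)_T\|_{0,\partial D'}\leq C'\|\bv_j\|_{\bm{U}(\Omega_j)}.
\end{equation*}
Combining this with the first step yields $\|\mathcal{I}_{\Gamma_{j,\ell}\rightarrow\Gamma_{\ell',j}}\mathbf{g}_T\|_{0,\Gamma_{\ell',j}}\leq CC'\|\mathbf{g}_T\|_{0,\Gamma_{j,\ell}}$. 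Equivalently, since $\bv_j\in\bm{U}_0(\Omega_j)$, Lemma \ref{lem:bound imp}(ii) gives the bound directly in terms of the full impedance trace on $\partial\Omega_j$, which equals $\|\mathbf{g}_T\|_{0,\Gamma_{j,\ell}}$.

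The main obstacle is verifying the hypotheses of Lemma \ref{lem:bound imp}, namely the regularity of the coefficients and of the subdomain. For the coefficients we need $\mu\in W^{1,\infty}(\Omega_j)$, so that Theorem \ref{thm:equivalent space} and Corollary \ref{cor:key1} apply; this is the standing assumption of Lemma \ref{lem:bound imp}, and we will impose it here. For the subdomain, $\Gamma_{\ell',j}$ is an interior interface of $\Omega_j$, so its trace is only controlled through the restriction property of Corollary \ref{cor:key1}. That corollary relies on the characterisation $\bm{U}=\bm{W}$, which carries no boundary condition, together with the Costabel/Jerison--Kenig trace results of Theorem \ref{thm:Costabel}. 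Once $D'$ is Lipschitz, these give the required $L^2$ control of $\curl\bv_j\times\bn$ and $(\bv_j)_T$ on $\Gamma_{\ell',j}$.
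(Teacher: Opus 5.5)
Your proposal is correct and is essentially the paper's proof. The paper applies Lemma \ref{lem:bound imp}(ii) directly to $\bv_j\in\bm{U}_0(\Omega_j)$ and uses $\|\imp{j}{\bv_j}\|_{0,\partial\Omega_j}=\|\mathbf{g}_T\|_{0,\Gamma_{j,\ell}}$. Your chain (Lemma \ref{lem:well pose} followed by Lemma \ref{lem:bound imp}(i)) is just the proof of part (ii) unpacked. One minor point: since $D'=\Omega_{\ell'}\cap\Omega_j$ coincides locally with $\Omega_{\ell'}$ near $\Gamma_{\ell',j}$, its outward normal there is exactly $\bn_{\ell'}$, so the ``up to sign'' caveat is not needed.
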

\begin{proof}
The result follows immediately from Lemma \ref{lem:bound imp}:
\begin{equation}\label{eq:imp2imp_bound}
\|\mathcal{I}_{\Gamma_{j,\ell}\rightarrow\Gamma_{\ell',j}}\mathbf{g}_T\|_{0,\Gamma_{\ell',j}}=\|\imp{\ell'}{\bv_j}\|_{0,\Gamma_{\ell',j}}\leq C \|\imp{j}{\bv_j}\|_{0,\partial\Omega_j}=C\|\bg_T\|_{0,\Gamma_{j,\ell}}.
\end{equation}
\end{proof}
\begin{theorem}[Relationship between products of $\cT$ and imp-to-imp maps] \label{thm:error_imp2imp}
Let $\{\Omega_\ell\}_{\ell=1}^N$ be a strip decomposition (in the sense of Definition \ref{def:strip}). Then $\cT_{j,\ell}: \bm{U}_0(\Omega_\ell)\to \bm{U}_0(\Omega_j)$ is a bounded operator and 
	\begin{equation}\label{eq:==}
		\imp{\ell'}{\cT_{\ell',j}\cT_{j, \ell}\bv_\ell} = \mathcal{I}_{\Gamma_{j,\ell}\rightarrow\Gamma_{\ell',j}}\imp{j}{\cT_{j,\ell}\bv_\ell}
        =\mathcal{I}_{\Gamma_{j,\ell}\rightarrow\Gamma_{\ell',j}}\imp{j}{\bv_\ell}
        \quad \text{ for } \bv_\ell\in \bm{U}(\Omega_\ell).
	\end{equation}
\end{theorem}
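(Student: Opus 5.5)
We first establish boundedness of $\cT_{j,\ell}$. Given $\bv_\ell\in\bm{U}_0(\Omega_\ell)$, the function $\cT_{j,\ell}\bv_\ell$ solves an interior impedance problem on $\Omega_j$ with zero volume source and impedance data $\imp{j}{\chi_\ell\bv_\ell}$ on $\partial\Omega_j\setminus\partial\Omega$ and zero on $\partial\Omega_j\cap\partial\Omega$. Following the computation \eqref{e:lastDay1} in the proof of Theorem \ref{thm:wellpose}, we split
\[
\imp{j}{\chi_\ell\bv_\ell}=\chi_\ell\,\imp{j}{\bv_\ell}+\mu^{-1}(\nabla\chi_\ell\times\bv_\ell)\times\bn_j .
\]
The first term is bounded in $\bL^2(\partial\Omega_j\setminus\partial\Omega)$ by $C\|\bv_\ell\|_{\bm U(\Omega_\ell)}$ using Lemma \ref{lem:bound imp}(i). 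For the second term, Theorem \ref{thm:equivalent space} gives $\bv_\ell\in\bH^{1/2}(\Omega_\ell)$, and Theorem \ref{thm:Costabel} gives a full $L^2$ trace on the interior Lipschitz interface, so the term is bounded as well. For quantitative bounds we apply the closed-graph theorem (or track the constants in Theorem \ref{thm:Costabel}). Lemma \ref{lem:well pose} then yields $\|\cT_{j,\ell}\bv_\ell\|_{\bm U(\Omega_j)}\le C\|\bv_\ell\|_{\bm U(\Omega_\ell)}$. By Theorem \ref{thm:equivalent norm}, the same bound holds in the $\|\cdot\|_{1,k}$ norms.

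Next we prove the second equality in \eqref{eq:==}, namely $\imp{j}{\cT_{j,\ell}\bv_\ell}=\imp{j}{\bv_\ell}$ on $\Gamma_{j,\ell}$. For a strip decomposition, $\partial\Omega_j\setminus\partial\Omega$ consists of $\Gamma_{j,j-1}\cup\Gamma_{j,j+1}$, and $\ell=j\pm1$. By \eqref{eq:chi-strip}, $\chi_\ell\equiv1$ on $\Gamma_{j,\ell}$. We need more than this: we need $\chi_\ell\equiv 1$ in a neighbourhood of $\Gamma_{j,\ell}$ relative to the overlap, so that $\nabla\chi_\ell=0$ there and hence $\imp{j}{\chi_\ell\bv_\ell}=\imp{j}{\bv_\ell}$ on $\Gamma_{j,\ell}$. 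If only the pointwise value $\chi_\ell=1$ were available, we would argue that on $\Gamma_{j,\ell}$ the gradient $\nabla\chi_\ell$ is normal to the level set; this requires care and is a point to check. On the other piece $\Gamma_{j,\ell'}$ with $\ell'\neq\ell$, $\chi_\ell$ vanishes near it, because $\Omega_\ell\cap\Omega_{\ell'}=\emptyset$ when $|\ell-\ell'|=2$. Therefore $\cT_{j,\ell}\bv_\ell$ solves exactly the problem in Definition \ref{def:imp2imp} with $\bg_T=\imp{j}{\bv_\ell}|_{\Gamma_{j,\ell}}$.

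Finally we prove the first equality. By the same reasoning applied with $(\ell',j)$ in place of $(j,\ell)$, the function $\cT_{\ell',j}\bw$ with $\bw:=\cT_{j,\ell}\bv_\ell$ satisfies $\imp{\ell'}{\cT_{\ell',j}\bw}=\imp{\ell'}{\bw}$ on $\Gamma_{\ell',j}$. By the definition \eqref{eq:imp2imp}, $\imp{\ell'}{\bw}|_{\Gamma_{\ell',j}}=\mathcal I_{\Gamma_{j,\ell}\to\Gamma_{\ell',j}}\imp{j}{\bw}$, because $\bw$ is the unique solution of the defining problem (uniqueness follows from Lemma \ref{lem:stability}). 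Combining this with the previous step gives \eqref{eq:==}.

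The main obstacle is the partition-of-unity step, i.e.\ justifying $\imp{j}{\chi_\ell\bv}=\imp{j}{\bv}$ on $\Gamma_{j,\ell}$, since this needs the gradient term to vanish in the trace sense. The plan is to assume, as is standard for strip decompositions, that $\chi_\ell\equiv1$ on a neighbourhood of $\Gamma_{j,\ell}$ within $\Omega_j$, and to note that a merely Lipschitz $\chi_\ell$ would otherwise require a finer trace argument. A smaller technical point is the boundedness of the $\nabla\chi_\ell$ term in Step 1, which rests on Theorem \ref{thm:Costabel}.
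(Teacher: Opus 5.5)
Your proof is correct, and for the two equalities it follows the paper's route. You are in fact more explicit than the paper about why $\cT_{j,\ell}\bv_\ell$ has zero impedance data on $\partial\Omega_j\setminus\Gamma_{j,\ell}$, which is what makes it the solution of the problem in Definition~\ref{def:imp2imp}.

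Where you differ is the boundedness. The paper proves the trace identity first and then gets boundedness almost for free. Since $\imp{j}{\cT_{j,\ell}\bv_\ell}$ vanishes off $\Gamma_{j,\ell}$ and equals $\imp{j}{\bv_\ell}$ on it, \eqref{eq:=boundary} gives the exact identity $\|\cT_{j,\ell}\bv_\ell\|_{1,k,\partial\Omega_j}=\|\imp{j}{\bv_\ell}\|_{0,\Gamma_{j,\ell}}$. The right-hand side is then controlled by the impedance data of $\bv_\ell$ on $\partial\Omega_\ell$; the justification is Lemma~\ref{lem:bound imp}(ii).

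You instead bound the full data $\imp{j}{\chi_\ell\bv_\ell}$, including the $\nabla\chi_\ell$ term, via Theorem~\ref{thm:Costabel} and a closed-graph argument. You then pass through Lemma~\ref{lem:well pose} and the norm equivalence of Theorem~\ref{thm:equivalent norm}. This is valid, and it buys generality: it does not use the strip structure, so it would give boundedness of $\cT_{j,\ell}$ for general decompositions and partitions of unity. In the present setting, however, your own Step~2 shows that the $\nabla\chi_\ell$ term is zero on $\Gamma_{j,\ell}$ and that $\chi_\ell$ vanishes near the other interface. So the Costabel/closed-graph machinery is unnecessary if you simply do Step~2 first. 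It also leaves you with a non-explicit, $k$-dependent constant where the paper has an identity, and that identity is what is reused in Lemma~\ref{assump:TLU}.

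The point you flag as the main obstacle has a one-line resolution. This is what the paper uses implicitly when it deduces $\nabla\chi_\ell=\bm{0}$ on $\partial\Omega_j\setminus\partial\Omega$ from \eqref{eq:chi-strip}. Since $0\le\chi_\ell\le 1$ and $\chi_\ell=1$ on $\Gamma_{j,\ell}\subset\Omega_\ell$, every point of $\Gamma_{j,\ell}$ is an interior maximum of $\chi_\ell$. Hence, if $\chi_\ell$ is $C^1$ near $\Gamma_{j,\ell}$ (e.g.\ $C^{1,1}$ as in Theorem~\ref{thm:wellpose2}), the whole gradient vanishes there, normal component included, not just the tangential part. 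So no neighbourhood assumption is needed. For merely Lipschitz $\chi_\ell$ the pointwise statement on a surface is not meaningful, so some such regularity is implicit in the paper's proof as well.
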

\begin{proof}
The first equality in \eqref{eq:==} follows from \eqref{eq:imp2imp}. For the second equality, observe that, 
by \eqref{eq:chi-strip}, 
$\chi_\ell$ satisfies $\nabla\chi_\ell = \bm{0}  \mbox{ on }\partial\Omega_j\backslash\partial\Omega,~j\neq \ell$. Then, by the definition of $\cT_{j,\ell}$ \eqref{eqn:local_error}, the definition of ${\rm imp}_j$ \eqref{eq:imp_map}, on $\Gamma_{j,\ell}$,
\begin{equation*}
    \begin{aligned}
        \imp{j}{\cT_{j,\ell}\bv_\ell}=\imp{j}{\chi_\ell\bv_\ell} 
        &=\mu^{-1}\curl(\chi_\ell \bv_\ell)\times\bn_j-\ri k\chi_\ell(\bv_{\ell})_T\\ 
        &=\mu^{-1}\nabla\chi_\ell\times\bv_\ell\times\bn_j+\mu^{-1}\chi_\ell\curl\bv_\ell\times\bn_j-\ri k \chi_\ell(\bv_\ell)_T\\ 
        &=\mu^{-1}\curl\bv_\ell\times\bn_j-\ri k(\bv_\ell)_T
        \\
        &=\imp{j}{\bv_\ell}.
    \end{aligned}
\end{equation*}
Finally, for the boundedness of $\cT_{j,\ell}$, 
by the definition of the local error propagation operator \eqref{eqn:local_error} and the support of $\chi_\ell$, $\imp{j}{\cT_{j,\ell\bv_\ell}}$ vanishes on $\partial\Omega_j\setminus \Gamma_{j,\ell}$. 
Thus, from the 
expression \eqref{eq:=boundary} for $\|\cdot\|_{1,k,\partial\Omega_j}$ and \eqref{eq:chi-strip}, we obtain
\begin{equation*}
\|\cT_{j,\ell}\bv_\ell\|_{1,k,\partial\Omega_j}=\|\imp{j}{\cT_{j,\ell}\bv_\ell}\|_{0,\Gamma_{j,\ell}}= \|\imp{j}{\bv_\ell}\|_{0,\Gamma_{j,\ell}}\leq \|\bv_\ell\|_{1,k,\partial\Omega_j}.
\end{equation*}
  \end{proof}

\section{Convergence of the iterative methods for strip decompositions}\label{sec:4}

In this section we use 
Theorem~\ref{thm:error_imp2imp}
to estimate powers of the error propagation operator $\boldsymbol{\mathcal{T}}$ in terms of norms of certain impedance-to-impedance maps, focusing on strip decompositions, in which case we see that there are essentially two different impedance-to-impedance maps.

We emphasise again that  one could obtain an appropriate analogue of Theorem~\ref{thm:error_imp2imp} for general decompositions (i.e., without any assumption on the subdomains $\{\Omega\}_{\ell=1}^N$) to then, in principle,  understand convergence of the parallel Schwarz method in general. For example, \cite[\S6.3]{gong2022convergence} sketches how one could use this theory in the case of a checkerboard decomposition. However, 
the interactions between the impedance-to-impedance maps are already much more complicated for the checkerboard decomposition compared to the strip decomposition.

Recall that for strip decompositions the partition of unity functions satisfy 
\eqref{eq:chi-strip} and $\boldsymbol{\mathcal{T}}=\boldsymbol{\mathcal{L}}+\boldsymbol{\mathcal{U}}$ \eqref{eq:splitT}.
We introduce the notation (also in Figure \ref{fig:DD_bdry}) that
\begin{equation}\label{eq:Gamma+_}
    \Gamma_\ell^-:= \Gamma_{\ell,\ell-1} \quad \text{ and } \quad \Gamma_\ell^+:=\Gamma_{\ell,\ell+1}.
\end{equation}

Therefore, in the strip decomposition, there four different classes of maps:
$$
\{\mathcal{I}_{\Gamma_\ell^-\to\Gamma_{\ell-1}^+}\}_{\ell=2}^N,
\quad
\{\mathcal{I}_{\Gamma_\ell^+\to\Gamma_{\ell+1}^-}
\}_{\ell=1}^{N-1},
\quad
\{\mathcal{I}_{\Gamma_\ell^-\to\Gamma_{\ell+1}^-}
\}_{\ell=1}^{N-1},
\quad
\{\mathcal{I}_{\Gamma_\ell^+\to\Gamma_{\ell-1}^+}\}_{\ell=2}^N.
$$
By symmetry, we expect 
$\mathcal{I}_{\Gamma_\ell^-\to\Gamma_{\ell-1}^+}$
to behave like
$\mathcal{I}_{\Gamma_\ell^+\to\Gamma_{\ell+1}^-}$, and 
$\mathcal{I}_{\Gamma_\ell^-\to\Gamma_{\ell+1}^-}$
to behave like
$\mathcal{I}_{\Gamma_\ell^+\to\Gamma_{\ell-1}^+}$; indeed, we see in \S\ref{sec:num4} that the norms of the first pair can be written in terms of the norm of a ``left-to-right" (``right-to-left") impedance map, and the norms of the second pair can be written in terms of the norm of a ``left-to-left" (``right-to-right") impedance map (where the adjectives ``left" and ``right" refer to the direction of the normal on the interfaces). 
We therefore let 
\begin{align}
    \rho &= \max_{\ell}\left\{ \left\|\mathcal{I}_{\Gamma_\ell^-\to\Gamma_{\ell-1}^+}\right\|_{L^2\to L^2}, \left\|\mathcal{I}_{\Gamma_\ell^+\to\Gamma_{\ell+1}^-}\right\|_{L^2\to L^2} \right\}, \label{eqn:rho} \\
    \gamma &= \max_{\ell}\left\{ \left\|\mathcal{I}_{\Gamma_\ell^-\to\Gamma_{\ell+1}^-}\right\|_{L^2\to L^2}, \left\|\mathcal{I}_{\Gamma_\ell^+\to\Gamma_{\ell-1}^+}\right\|_{L^2\to L^2} \right\},\label{eqn:gamma}
\end{align}
with these quantities finite by  Lemma \ref{lem:bdd_I}.

Since
\[
\boldsymbol{\mathcal{T}}^2=(\boldsymbol{\mathcal{L}}+\boldsymbol{\mathcal{U}})^2
=\boldsymbol{\mathcal{L}}^2+\boldsymbol{\mathcal{L}}\boldsymbol{\mathcal{U}}+\boldsymbol{\mathcal{U}}\boldsymbol{\mathcal{L}}+\boldsymbol{\mathcal{U}}^2,
\]
we see that to bound powers of $\boldsymbol{\mathcal{T}}$ it is sufficient to bound 
$\boldsymbol{\mathcal{L}}\boldsymbol{\mathcal{U}}$, $\boldsymbol{\mathcal{U}}\boldsymbol{\mathcal{L}}$, $\boldsymbol{\mathcal{L}}^2$, $\boldsymbol{\mathcal{U}}^2$.
$\boldsymbol{\mathcal{L}}$, and $\boldsymbol{\mathcal{U}}$.

  \begin{lemma}[Bounding products of $\bm{\mathcal{L}}$ and $\bm{\mathcal{U}}$ via $\rho$ and $\gamma$]\label{assump:TLU}
 Let $\{\Omega_\ell\}_{\ell=1}^N$ be a strip decomposition (in the sense of Definition \ref{def:strip})
 andw let $\boldsymbol{\mathcal{L}}$ and $\boldsymbol{\mathcal{U}}$ be defined by 
 \eqref{eq:splitT}. Then
  \begin{align}
\|\boldsymbol{\mathcal{L}}\boldsymbol{\mathcal{U}} \mathfrak{u}\|_{\mathbb{U}_0} \le \rho \|\boldsymbol{\mathcal{U}} \mathfrak{u} \|_{\mathbb{U}_0},\quad &\text{and}\quad \|\boldsymbol{\mathcal{U}}\boldsymbol  {\mathcal{L}}\mathfrak{u}\|_{\mathbb{U}_0} \le \rho\|\boldsymbol{\mathcal{L}} \mathfrak{u} \|_{\mathbb{U}_0},\label{eq:1}\\
\|\boldsymbol{\mathcal{L}}^2 \mathfrak{u}\|_{\mathbb{U}_0} \le \gamma  \|\boldsymbol{\mathcal{L}}\mathfrak{u} \|_{\mathbb{U}_0},\quad &\text{and}\quad \|\boldsymbol{\mathcal{U}}^2\mathfrak{u}\|_{\mathbb{U}_0} \le \gamma \|\boldsymbol{\mathcal{U}} \mathfrak{u} \|_{\mathbb{U}_0},\label{eq:2}
\end{align}
  for any $\mathfrak{u} \in \mathbb{U}_0 $, and 
\begin{align}
\|\boldsymbol{\mathcal{L}}\mathfrak{u}\|_{\mathbb{U}_0} \le \sqrt{\rho^2+\gamma^2}\|\mathfrak{u}\|_{\mathbb{U}_0},\quad &\text{and}\quad \|\boldsymbol{\mathcal{U}}\mathfrak{u}\|_{\mathbb{U}_0} \le \sqrt{\rho^2+\gamma^2}\|\mathfrak{u}\|_{\mathbb{U}_0},\label{eq:3}
  \end{align}
    for any $\mathfrak{u} \in \mathbb{U}_0 $ with zero impedance data on $\partial\Omega_\ell\cap\partial\Omega$. 
    \end{lemma}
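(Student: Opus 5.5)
The plan is to reduce every estimate to the identity \eqref{eq:==} of Theorem~\ref{thm:error_imp2imp}, combined with the boundary norm \eqref{eq:=boundary} of Theorem~\ref{thm:equivalent norm}. First I record a basic fact. If $\bw\in\bm{U}_0(\Omega_\ell)$ has impedance data supported on a single interface $\Gamma\subset\partial\Omega_\ell\setminus\partial\Omega$, then $\|\bw\|_{1,k,\partial\Omega_\ell}=\|\imp{\ell}{\bw}\|_{0,\Gamma}$. By \eqref{eqn:local_error} and the support of $\chi_j$, the function $\cT_{\ell,j}\bv_j$ has impedance data $\imp{\ell}{\chi_j\bv_j}$ on $\Gamma_{\ell,j}$ and zero elsewhere. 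Writing out components, $(\boldsymbol{\mathcal{U}}\mathfrak{u})_\ell=\cT_{\ell,\ell+1}\bu_{\ell+1}$ and $(\boldsymbol{\mathcal{L}}\mathfrak{u})_\ell=\cT_{\ell,\ell-1}\bu_{\ell-1}$. The function $\cT_{\ell,\ell+1}\bu_{\ell+1}$ has impedance data only on $\Gamma_\ell^+$, and $\cT_{\ell,\ell-1}\bu_{\ell-1}$ only on $\Gamma_\ell^-$.

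For \eqref{eq:1}, I compute $(\boldsymbol{\mathcal{L}}\boldsymbol{\mathcal{U}}\mathfrak{u})_\ell=\cT_{\ell,\ell-1}\cT_{\ell-1,\ell}\bu_\ell$. By the basic fact and \eqref{eq:==} with $j=\ell-1$ and $\ell'=\ell$, its $\|\cdot\|_{1,k}$ norm equals
\[
\bigl\|\mathcal{I}_{\Gamma_{\ell-1}^+\to\Gamma_\ell^-}\imp{\ell-1}{\cT_{\ell-1,\ell}\bu_\ell}\bigr\|_{0,\Gamma_\ell^-}.
\]
This is at most $\rho\,\|\imp{\ell-1}{\cT_{\ell-1,\ell}\bu_\ell}\|_{0,\Gamma_{\ell-1}^+}$, which in turn equals $\rho\,\|(\boldsymbol{\mathcal{U}}\mathfrak{u})_{\ell-1}\|_{1,k,\partial\Omega_{\ell-1}}$. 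Squaring and summing over $\ell$ gives the first bound, since the map $\ell\mapsto\ell-1$ is injective. The bound for $\boldsymbol{\mathcal{U}}\boldsymbol{\mathcal{L}}$ is symmetric.

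For \eqref{eq:2}, I compute $(\boldsymbol{\mathcal{L}}^2\mathfrak{u})_\ell=\cT_{\ell,\ell-1}\cT_{\ell-1,\ell-2}\bu_{\ell-2}$. The inner factor has data on $\Gamma_{\ell-1}^-$, and it is mapped to data on $\Gamma_\ell^-$ through $\mathcal{I}_{\Gamma_{\ell-1}^-\to\Gamma_\ell^-}$. That map has norm at most $\gamma$, which gives the $\gamma$ bounds in the same way.

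For \eqref{eq:3}, I have $\|(\boldsymbol{\mathcal{L}}\mathfrak{u})_\ell\|_{1,k}=\|\imp{\ell}{\bu_{\ell-1}}\|_{0,\Gamma_\ell^-}$ by the computation in Theorem~\ref{thm:error_imp2imp}, which uses $\chi_{\ell-1}=1$ on $\Gamma_\ell^-$. The hypothesis that $\bu_{\ell-1}$ has zero impedance data on $\partial\Omega_{\ell-1}\cap\partial\Omega$ means its data lives only on $\Gamma_{\ell-1}^-\cup\Gamma_{\ell-1}^+$. By linearity of the interior impedance problem, I split $\bu_{\ell-1}=\bw^-+\bw^+$, each piece carrying one part of the data. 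Then $\imp{\ell}{\bw^-}$ on $\Gamma_\ell^-$ is the image under $\mathcal{I}_{\Gamma_{\ell-1}^-\to\Gamma_\ell^-}$, bounded by $\gamma$, and $\imp{\ell}{\bw^+}$ is the image under $\mathcal{I}_{\Gamma_{\ell-1}^+\to\Gamma_\ell^-}$, bounded by $\rho$.

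The obstacle is obtaining $\sqrt{\rho^2+\gamma^2}$ rather than $\rho+\gamma$. I would apply Cauchy--Schwarz:
\[
\gamma a+\rho b\le\sqrt{\gamma^2+\rho^2}\,\sqrt{a^2+b^2},\qquad a=\|\imp{}{\bu}\|_{0,\Gamma_{\ell-1}^-},\quad b=\|\imp{}{\bu}\|_{0,\Gamma_{\ell-1}^+}.
\]
Then I use $a^2+b^2=\|\bu_{\ell-1}\|_{1,k}^2$, which holds because the two interfaces are disjoint and the data vanishes elsewhere. Summing over $\ell$ completes the argument. A subtle point to check is that the boundary pieces are genuinely disjoint and that the endpoint subdomains $\ell=1,N$ contribute trivially.
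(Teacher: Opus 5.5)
Your proposal is correct and follows essentially the same route as the paper. You reduce each bound, via \eqref{eq:==} and the boundary-norm identity \eqref{eq:=boundary}, to the norm of a single impedance-to-impedance map; for \eqref{eq:3} you split $\bu_{\ell-1}$ into one-sided pieces and apply Cauchy--Schwarz to $\gamma a+\rho b$, exactly as the paper does with $(\gamma\|\bu_\ell^-\|+\rho\|\bu_\ell^+\|)^2$ (up to an index shift).
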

  \begin{proof}
  By \eqref{eqn:local_error} and \eqref{eq:chi}, $\imp{\ell+1}{\cT_{\ell+1,\ell}\cT_{\ell,\ell+1}\bu_{\ell+1}}=0$ on $\partial\Omega_{\ell+1}\setminus\Gamma_{\ell+1,\ell}$. Combining this with \eqref{eq:normU0}, Theorem \ref{thm:error_imp2imp} and \eqref{eqn:rho}, implies that
            \begin{align*}
\|\boldsymbol{\mathcal{L}}\boldsymbol{\mathcal{U}} \mathfrak{u}\|_{\mathbb{U}_0}^2 = \sum_{\ell=1}^{N-1} \left\| \mathcal{T}_{\ell+1, \ell} \mathcal{T}_{\ell, \ell+1} \bu_{\ell+1} \right\|_{1, k, \partial \Omega_{\ell+1}}^2
&=\sum_{\ell=1}^{N-1}\|\imp{\ell+1}{\cT_{\ell+1,\ell}\cT_{\ell,\ell+1}\bu_{\ell+1}}\|_{0,\Gamma_{\ell+1,\ell}}\\
&\leq \max_{\ell} \| \mathcal{I}_{\Gamma_\ell^+ \to \Gamma_{\ell+1}^-} \|^2 \sum_{\ell=1}^{N-1} \left\| \mathcal{T}_{\ell, \ell+1} \bu_{\ell+1} \right\|_{1, k, \partial \Omega_{\ell}}^2\\&
\leq \rho^2 \|\boldsymbol{\mathcal{U}} \mathfrak{u}\|_{\mathbb{U}_0}^2.
\end{align*}
The proofs of the other three inequalities in \eqref{eq:1} and \eqref{eq:2} are similar.

Using the fact that \(\chi_\ell=1\) on \(\Gamma_{\ell+1}^-\) and
\(\chi_\ell=0\) on \(\Gamma_{\ell+1}^+\), together with the definition of
\(\boldsymbol{\mathcal L}\), Definition~\ref{def:imp2imp}, and the estimates
\eqref{eqn:rho}--\eqref{eqn:gamma}, we obtain \eqref{eq:3}. More precisely,
for any \(\bu_\ell\in\bm U_0(\Omega_\ell)\) 
with zero impedance data on $\partial\Omega_\ell\cap\partial\Omega$, 
\[
    \bu_\ell=\bu_\ell^-+\bu_\ell^+,
\]
where \(\bu_\ell^-\) and \(\bu_\ell^+\) have zero impedance data on
\(\Gamma_\ell^+\) and \(\Gamma_\ell^-\), respectively. By \eqref{eq:=boundary},
\[
    \|\bu_\ell\|_{1,k,\partial\Omega_\ell}^2
    =
    \|\bu_\ell^-\|_{1,k,\partial\Omega_\ell}^2
    +
    \|\bu_\ell^+\|_{1,k,\partial\Omega_\ell}^2.
\]
Using \eqref{eq:normU0}, \eqref{eqn:local_error}, \eqref{eq:chi-strip}, Definition \ref{def:imp2imp}, \eqref{eqn:rho} and \eqref{eqn:gamma}, in that order, we have
\begin{align*}
    \|\boldsymbol{\mathcal L}\mathfrak u\|_{\mathbb U_0}^2
    =
    \sum_{\ell=1}^{N-1}
    \|\mathcal T_{\ell+1,\ell}\bu_\ell\|^2_{1,k,\partial\Omega_{\ell+1}}
    &=\sum_{\ell=1}^{N-1}\|\imp{\ell+1}{\cT_{\ell+1,\ell}\bu_\ell}\|_{0,\Gamma_{\ell+1}^-}^2\\
    &=\sum_{\ell=1}^{N-1}\|\imp{\ell+1}{\bu_\ell}\|_{0,\Gamma_{\ell+1}^-}^2\\
    &\le
    \sum_{\ell=1}^{N-1}
    \left(
    \gamma\|\bu_\ell^-\|_{1,k,\partial\Omega_\ell}
    +
    \rho\|\bu_\ell^+\|_{1,k,\partial\Omega_\ell}
    \right)^2
    \\
    &\le
    (\rho^2+\gamma^2)
    \sum_{\ell=1}^{N-1}
    \|\bu_\ell\|_{1,k,\partial\Omega_\ell}^2
    =
    (\rho^2+\gamma^2)
    \|\mathfrak u\|_{\mathbb U_0}^2 .
\end{align*}
The proof of the second inequality in \eqref{eq:3} is similar.
\end{proof}
 \begin{theorem}[Bound on $\boldsymbol{\mathcal{T}}^N$ ]\label{thm:TLU}
Let the error propagation operator
\(\boldsymbol{\mathcal{T}}:\mathbb{U}_0\to\mathbb{U}_0\)  defined by
\eqref{eq:splitT} and 
\(\rho\) defined by 
\eqref{eqn:rho}
and 
\(\gamma\) defined by \eqref{eqn:gamma}. Then
 $$
\|\boldsymbol{\mathcal{T}}^N\|_{\mathbb{U}_0\to \mathbb{U}_0} \le 2\sqrt{\gamma^2+\rho^2}\left((\gamma+\rho)^{N-1} -\gamma^{N-1}\right).
  $$
 \end{theorem}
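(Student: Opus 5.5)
We expand $\boldsymbol{\mathcal{T}}^N=(\boldsymbol{\mathcal{L}}+\boldsymbol{\mathcal{U}})^N$ as a sum of $2^N$ words $W=A_N A_{N-1}\cdots A_1$ with each $A_i\in\{\boldsymbol{\mathcal{L}},\boldsymbol{\mathcal{U}}\}$, and bound each word using Lemma~\ref{assump:TLU}.

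\textbf{Bounding a single word.} Fix a word $W$ and apply it to $\mathfrak{u}\in\mathbb{U}_0$.
\begin{itemize}
\item The first factor $A_1$ is bounded using \eqref{eq:3}. This gives $\|A_1\mathfrak{u}\|_{\mathbb{U}_0}\le\sqrt{\rho^2+\gamma^2}\|\mathfrak{u}\|_{\mathbb{U}_0}$. This needs $\mathfrak{u}$ to have zero impedance data on $\partial\Omega_\ell\cap\partial\Omega$. That is automatic for error vectors by \eqref{eq:format1}, and more generally holds for anything in the range of $\boldsymbol{\mathcal{T}}$ by \eqref{eqn:local_error}. If needed, we will restrict to the natural invariant subspace of $\mathbb{U}_0$ with zero data on $\partial\Omega$, where the norm bound is meant.
\item Each later factor $A_{i+1}$ acts on $A_i\cdots A_1\mathfrak{u}$, which lies in the range of $A_i$. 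By \eqref{eq:1}--\eqref{eq:2}, passing from $A_i\cdots A_1\mathfrak{u}$ to $A_{i+1}A_i\cdots A_1\mathfrak{u}$ multiplies the norm by at most $\rho$ if $A_{i+1}\ne A_i$ (a ``switch''), and by at most $\gamma$ if $A_{i+1}=A_i$ (a ``repeat'').
\item To apply \eqref{eq:1}--\eqref{eq:2} to $\mathfrak{v}=A_{i-1}\cdots A_1\mathfrak{u}$ rather than to $\mathfrak{u}$, we note that those inequalities are stated for arbitrary $\mathfrak{u}\in\mathbb{U}_0$.
\end{itemize}
It follows that a word with $s$ switches satisfies
\[
\|W\|\le \sqrt{\rho^2+\gamma^2}\,\rho^{s}\gamma^{N-1-s}.
\]

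\textbf{Killing the switch-free words.} The key structural input is that words with $s=0$, namely $\boldsymbol{\mathcal{L}}^N$ and $\boldsymbol{\mathcal{U}}^N$, vanish. Since $\boldsymbol{\mathcal{L}}$ is strictly lower block-triangular in \eqref{eq:splitT}, $\boldsymbol{\mathcal{L}}^N$ maps component $j$ to component $j+N>N$, so $\boldsymbol{\mathcal{L}}^N=0$. The same argument gives $\boldsymbol{\mathcal{U}}^N=0$.

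\textbf{Counting.} For each $s=1,\dots,N-1$ there are $2\binom{N-1}{s}$ words with $s$ switches: choose the first letter, then the switch positions among the $N-1$ gaps. Summing over $s$,
\[
\|\boldsymbol{\mathcal{T}}^N\|\le 2\sqrt{\rho^2+\gamma^2}\sum_{s=1}^{N-1}\binom{N-1}{s}\rho^s\gamma^{N-1-s}=2\sqrt{\rho^2+\gamma^2}\big((\gamma+\rho)^{N-1}-\gamma^{N-1}\big),
\]
by the binomial theorem after removing the $s=0$ term.

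\textbf{Main obstacle.} The main difficulty is justifying the chaining step cleanly. Each inequality in Lemma~\ref{assump:TLU} bounds $A_{i+1}A_i\mathfrak{v}$ by $\|A_i\mathfrak{v}\|$, with $\mathfrak{v}=A_{i-1}\cdots A_1\mathfrak{u}$, and this telescopes correctly. We also need the zero-boundary-data hypothesis of \eqref{eq:3} for the first factor. We would address this by noting that $\boldsymbol{\mathcal{T}}^N\mathfrak{u}=\boldsymbol{\mathcal{T}}^{N}(\text{projection})$ on that subspace, or simply by stating the bound on that subspace.
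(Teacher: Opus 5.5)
Your core argument matches the paper's, which defers to \cite[Theorem 4.13]{gong2022convergence}. You expand $(\boldsymbol{\mathcal{L}}+\boldsymbol{\mathcal{U}})^N$ into words and discard $\boldsymbol{\mathcal{L}}^N=\boldsymbol{\mathcal{U}}^N=0$ by block nilpotency. You then bound a word with $s$ switches by $\sqrt{\rho^2+\gamma^2}\,\rho^s\gamma^{N-1-s}$, using \eqref{eq:3} once and chaining \eqref{eq:1}--\eqref{eq:2}, and count the $2\binom{N-1}{s}$ such words. The chaining, the nilpotency and the counting are all correct.

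The one step to drop is the projection remedy for the zero-data hypothesis in \eqref{eq:3}. Suppose $\mathfrak{u}_0$ keeps the interface impedance data of $\mathfrak{u}$ and zeroes the data on $\partial\Omega_\ell\cap\partial\Omega$. It is not true in general that $\boldsymbol{\mathcal{T}}\mathfrak{u}=\boldsymbol{\mathcal{T}}\mathfrak{u}_0$. By \eqref{eqn:local_error} and \eqref{eq:chi-strip}, $\boldsymbol{\mathcal{T}}$ sees $\bu_{\ell\pm1}$ only through $\imp{\ell}{\bu_{\ell\pm1}}$ on $\Gamma_\ell^\pm$, and $\Gamma_\ell^\pm$ lies inside $\Omega_{\ell\pm1}$. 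That interior trace depends on all of the boundary data of $\bu_{\ell\pm1}$, including its data on $\partial\Omega_{\ell\pm1}\cap\partial\Omega$. The proof of Theorem~\ref{thm:norm_equiv_hc} makes the same identification, and it fails there for the same reason.

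Your second option is the right one. Prove the bound on $\widetilde{\mathbb{U}}_0:=\{\mathfrak{u}\in\mathbb{U}_0:\ \text{zero impedance data on }\partial\Omega\}$. This subspace contains the range of $\boldsymbol{\mathcal{T}}$, hence every error $\mathfrak{e}^n$ with $n\ge 1$, and it is exactly where Lemma~\ref{assump:TLU} provides \eqref{eq:3}. On all of $\mathbb{U}_0$, the first-factor constant $\sqrt{\rho^2+\gamma^2}$ would have to be enlarged. It would need to include the norm of the map from data on $\partial\Omega_\ell\cap\partial\Omega$ to impedance data on $\Gamma_{\ell\pm1}^\mp$. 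So the operator norm in the statement should be read as the norm on $\widetilde{\mathbb{U}}_0$, which is all the paper's argument delivers as well.
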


\begin{proof}
    Given the bounds in Lemma \ref{assump:TLU}, the proof of this result is then identical to that of 
\cite[Theorem 4.13]{gong2022convergence}; we therefore omit the details.
\end{proof}

Taylor's theorem then implies the following corollary.

\begin{corollary}\label{cor:key}
    Under the assumptions of Theorem \ref{thm:TLU}, 
if $\rho$ is sufficiently small relative to $\gamma$ and $N$ then
$\|\boldsymbol{\mathcal{T}}^N\|_{\mathbb{U}_0\to \mathbb{U}_0}$ is small 
\end{corollary}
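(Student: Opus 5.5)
The corollary follows from the explicit bound in Theorem \ref{thm:TLU} together with a first-order Taylor expansion of $(\gamma+\rho)^{N-1}$ in $\rho$ about $\rho=0$. Write $f(\rho):=(\gamma+\rho)^{N-1}$. Then $f(0)=\gamma^{N-1}$, so the bracket in Theorem \ref{thm:TLU} is $f(\rho)-f(0)$. By the mean value form of Taylor's theorem there is $\xi\in(0,\rho)$ with
\begin{equation*}
(\gamma+\rho)^{N-1}-\gamma^{N-1}=(N-1)(\gamma+\xi)^{N-2}\rho\le (N-1)(\gamma+\rho)^{N-2}\rho .
\end{equation*}
This holds trivially if $N=2$. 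We use $\rho,\gamma\ge 0$, which is immediate because they are operator norms in \eqref{eqn:rho}--\eqref{eqn:gamma}.

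Substituting this into Theorem \ref{thm:TLU} gives
\begin{equation*}
\|\boldsymbol{\mathcal{T}}^N\|_{\mathbb{U}_0\to\mathbb{U}_0}\le 2(N-1)\sqrt{\gamma^2+\rho^2}\,(\gamma+\rho)^{N-2}\,\rho .
\end{equation*}
To make ``sufficiently small relative to $\gamma$ and $N$'' precise, assume $\rho\le\gamma$, or more generally $\rho\le c\gamma$ for a fixed $c$. Then $\sqrt{\gamma^2+\rho^2}\le\sqrt2\,\gamma$ and $(\gamma+\rho)^{N-2}\le (2\gamma)^{N-2}$. The bound becomes
\begin{equation*}
\|\boldsymbol{\mathcal{T}}^N\|\le 2^{N-1/2}(N-1)\gamma^{N-1}\rho ,
\end{equation*}
which is a fixed multiple (depending on $\gamma$ and $N$) of $\rho$. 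Consequently, for any prescribed $\delta>0$,
\begin{equation*}
\rho\le \min\bigl\{\gamma,\ \delta\big/\bigl(2^{N-1/2}(N-1)\gamma^{N-1}\bigr)\bigr\}
\end{equation*}
gives $\|\boldsymbol{\mathcal{T}}^N\|\le\delta$. In particular $\delta<1$ makes $\boldsymbol{\mathcal{T}}^N$ a contraction, so the parallel Schwarz iteration converges.

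The degenerate case $\gamma=0$ needs separate treatment, since then the bound in Theorem \ref{thm:TLU} reduces to $2\rho^N$, which is small whenever $\rho$ is small. The argument has no real obstacle. The only point needing care is stating the smallness condition so that it depends only on $\gamma$ and $N$, as above, rather than leaving it asymptotic.
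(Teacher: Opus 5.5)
Your proposal is correct and follows the paper's approach: the paper's entire justification is the remark that Taylor's theorem applied to $(\gamma+\rho)^{N-1}-\gamma^{N-1}$ in the bound of Theorem~\ref{thm:TLU} yields the corollary, and your mean-value estimate $(\gamma+\rho)^{N-1}-\gamma^{N-1}\le (N-1)(\gamma+\rho)^{N-2}\rho$ with an explicit threshold on $\rho$ makes that remark quantitative. If you want a less pessimistic constant, you can impose $\rho\le\gamma/(N-2)$ for $N\ge 3$ instead of $\rho\le\gamma$; this replaces the factor $(2\gamma)^{N-2}$ by $e\,\gamma^{N-2}$.
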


 

\section{From the parallel Schwarz method to the discrete RAS-imp method}\label{sec:5}

\subsection{Variational formulation of the parallel Schwarz method}\label{sec:5.1}

Define  the extension-by-zero operator $\mathcal{R}_\ell^\top$ 
by
\begin{equation}\label{eq:dis_ext}
    \mathcal{R}_\ell^\top \bv_\ell 
:=
\begin{cases}
\bv_\ell \text{ in } \Omega_\ell, \\
\bm{0} \text{ in } \Omega \setminus \Omega_\ell.
\end{cases}
\end{equation}
Let $\mathcal{R}$ be defined so that
$ (\mathcal{R}_\ell F)( \bw_\ell) =  F( \mathcal{R}_\ell^\top \bw_\ell)$ for $F$  a bounded linear functional on $L^2(\Omega)$ and $\bw_\ell\in L^2(\Omega)$.
Note that, in general, $\mathcal{R}_\ell^\top \bv_\ell \notin \mathbf{X}_{\iimp}(\Omega)$, but it always belongs to
\[
\bm{Y}(\Omega) := \{ \bv \in \mathbf{L}^2(\Omega) \mid \bv_T \in \mathbf{L}_T^2(\partial\Omega) \}.
\]
Define the sesquilinear form $\alpha:\bm{U}(\Omega)\times \bm{Y}(\Omega)\rightarrow\mathbb{C}$ by
\begin{equation}\label{e:alpha}
	\alpha(\bw, \bv):=\int_\Omega (\curl(\mu^{-1}\curl\bw)-k^2\epsilon\bw)\cdot\overline{\bv} + \int_{\partial\Omega}(\mu^{-1}\curl\bw\times\bn-\ri k \lambda\bw_T)\cdot\overline{\bv}_T.
\end{equation}
Let $\mathcal{A}:\bm{U}(\Omega)\rightarrow \bm{Y}'$ be the linear operator associated to $\alpha$; i.e.,
\begin{equation*}
	(\mathcal{A}\bw, \bv) := \alpha(\bw,\bv)\quad  \text{ for all }\bw\in \bm{U}(\Omega), \bv\in \bm{Y}(\Omega).
\end{equation*}
By integration by parts/Green's identity (see, e.g., \cite[Theorem 3.31]{monk2003finite}).
\begin{equation*}
\alpha(\bu,\bv)=a(\bu,\bv)\quad \text{for all } \bu\in \bU(\Omega)\mbox{ and } \bv\in \bX_{\iimp}(\Omega).
\end{equation*}

We now use the notation that $a_D(\bu,\bv)$ denotes the sesquilinear form $a(\bu,\bv)$ \eqref{eq:bilinear_form} with $\Omega$ replaced by $D$ and $\partial\Omega$ replaced by $\partial D$.

\begin{lemma}\label{lem:boundary_imp}
For $\bw\in \bm{U}(\Omega)$ and all $\bv_\ell\in \mathbf{X}_{\iimp}(\Omega_\ell)$,
	\begin{equation}
	\langle\imp{\ell}{\bw},(\bv_\ell)_T\rangle_{\partial\Omega_\ell\setminus\partial\Omega} = a_{\Omega_\ell}(\bw, \bv_\ell) - \alpha(\bw, \mathcal{R}_\ell^\top\bv_\ell):=b_\ell(\bw,\bv_\ell).
	\label{eqn:boundary_integral}
\end{equation}
\end{lemma}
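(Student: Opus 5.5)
The identity follows from a Green's formula on $\Omega_\ell$; we compare the two terms on the right-hand side of \eqref{eqn:boundary_integral} directly. First, $\bw\in\bm{U}(\Omega)$ restricts to $\bm{U}(\Omega_\ell)$ by Corollary \ref{cor:key1}, since each $\Omega_\ell$ is Lipschitz. This makes $\curl(\mu^{-1}\curl\bw)\in\bL^2(\Omega_\ell)$ and makes the impedance trace $\imp{\ell}{\bw}$ an element of $\bL^2_T(\partial\Omega_\ell)$ (Lemma \ref{lem:bound imp}(i)). Hence the left-hand side is a well-defined $L^2$ pairing against $(\bv_\ell)_T\in\bL^2_T(\partial\Omega_\ell)$.

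Next we compute $\alpha(\bw,\mathcal{R}_\ell^\top\bv_\ell)$ from \eqref{e:alpha}. Because $\mathcal{R}_\ell^\top\bv_\ell$ vanishes outside $\Omega_\ell$, the volume integral reduces to $\int_{\Omega_\ell}(\curl(\mu^{-1}\curl\bw)-k^2\epsilon\bw)\cdot\overline{\bv_\ell}$. The tangential trace of $\mathcal{R}_\ell^\top\bv_\ell$ on $\partial\Omega$ is $(\bv_\ell)_T$ on $\partial\Omega_\ell\cap\partial\Omega$ and zero elsewhere, where $\bn=\bn_\ell$. The boundary integral is therefore $\langle\mu^{-1}\curl\bw\times\bn_\ell-\ri k\lambda\bw_T,(\bv_\ell)_T\rangle_{\partial\Omega_\ell\cap\partial\Omega}$.

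We then apply the Green identity for $\bH(\curl)$ on $\Omega_\ell$ (e.g.\ \cite[Theorem 3.31]{monk2003finite}) with $\mu^{-1}\curl\bw$ and $\bv_\ell$. This gives
\[
(\mu^{-1}\curl\bw,\curl\bv_\ell)_{\Omega_\ell}=(\curl(\mu^{-1}\curl\bw),\bv_\ell)_{\Omega_\ell}+\langle\mu^{-1}\curl\bw\times\bn_\ell,(\bv_\ell)_T\rangle_{\partial\Omega_\ell},
\]
with the same sign convention used in the proof of Theorem \ref{thm:equivalent norm}. Substituting into $a_{\Omega_\ell}(\bw,\bv_\ell)$, whose boundary term is $-\ri k\langle\lambda\bw_T,(\bv_\ell)_T\rangle_{\partial\Omega_\ell}$ (with $\lambda$ understood as $\widetilde\lambda$ on $\partial\Omega_\ell$), yields
\[
a_{\Omega_\ell}(\bw,\bv_\ell)=\int_{\Omega_\ell}(\curl(\mu^{-1}\curl\bw)-k^2\epsilon\bw)\cdot\overline{\bv_\ell}+\langle\imp{\ell}{\bw},(\bv_\ell)_T\rangle_{\partial\Omega_\ell}.
\]
Subtracting the expression for $\alpha(\bw,\mathcal{R}_\ell^\top\bv_\ell)$ cancels the volume terms and the part of the boundary pairing on $\partial\Omega_\ell\cap\partial\Omega$. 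What remains is exactly $\langle\imp{\ell}{\bw},(\bv_\ell)_T\rangle_{\partial\Omega_\ell\setminus\partial\Omega}$, as claimed.

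The main obstacle is justifying the Green identity when the boundary pairing is an $L^2$ integral rather than an $\bH^{-1/2}$ duality. I would handle this by approximating $\bv_\ell$ by smooth fields in $\mathbf{X}_{\iimp}(\Omega_\ell)$; density of smooth functions holds in this space. Since $\mu^{-1}\curl\bw\times\bn_\ell\in\bL^2(\partial\Omega_\ell)$, both sides are continuous in the $\mathbf{X}_{\iimp}$ norm, so the identity extends from smooth $\bv_\ell$. A secondary point is checking that the sign and orientation conventions between $\curl\bw\times\bn$ and $\bn\times\curl\bw\cdot\overline{\bv}_T$ match. I would confirm this using $(\bA\times\bn)\cdot\overline{\bv}_T=(\bA\times\bn)\cdot\overline{\bv}$ for tangential $\bA\times\bn$.
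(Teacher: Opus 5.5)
Your proposal is correct and takes the same route as the paper, whose proof simply says the identity follows from integration by parts/Green's identity on $\Omega_\ell$. You apply Green's formula on $\Omega_\ell$ and then cancel the volume terms and the $\partial\Omega_\ell\cap\partial\Omega$ boundary pairing against $\alpha(\bw,\mathcal{R}_\ell^\top\bv_\ell)$; this fills in the details the paper omits, and your justification of the $L^2$ trace pairing (via Corollary \ref{cor:key1} and density in $\mathbf{X}_{\iimp}(\Omega_\ell)$) is consistent with the paper's framework.
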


\begin{proof}
This follows from integration by parts/Green's identity.
\end{proof}

The next lemma gives the variational formulation of the parallel Schwarz  method of Definition \ref{def:Schwarz}. This lemma holds under the assumption that $\bu^n \in \bU(\Omega)$; recall that this is true by Theorem \ref{thm:wellpose2} and the regularity assumption of Assumption \ref{ass:regularity}. 

\begin{lemma}[Variational formulation of the parallel Schwarz method]
If $\bu^n \in \bU(\Omega)$, then the parallel Schwarz method of Definition \ref{def:Schwarz} is equivalent to the following.

Let  $\mu, \epsilon, \mathbf{f},$ and $\mathbf{g}$ be as in the Maxwell interior impedance problem of Definition \ref{def:IIP}. Let $F$ be defined by \eqref{e:F}. 
Given $\bu^n \in \bU(\Omega)$, define  $\bu_\ell^{n+1}\in \bm{X}_{\iimp}(\Omega_\ell)$ by
\begin{equation}\label{e:parallelVar}
    a_{\Omega_\ell}(\bu^{n+1}_\ell,\bv_\ell) = F(\mathcal{R}_\ell^\top\bv_\ell) + a_{\Omega_\ell}(\bu^{n}, \bv_\ell) - \alpha(\bu^{n}, \mathcal{R}_\ell^\top\bv_\ell)\quad \text{ for all } \bv_\ell\in \mathbf{X}_{\iimp}(\Omega_\ell), 
\end{equation}
and set $\bu^{n+1}:= \sum_{\ell=1}^N \chi_\ell \bu_\ell^{n+1}$. 
\end{lemma}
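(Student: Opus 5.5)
We will show that, for $\bu^n\in\bU(\Omega)$, the function $\bu_\ell^{n+1}$ solves the boundary-value problem \eqref{eq:iterative-strong-local} if and only if it solves the variational problem \eqref{e:parallelVar}. The link between the two is Lemma \ref{lem:boundary_imp}. Since the global update $\bu^{n+1}=\sum_\ell\chi_\ell\bu_\ell^{n+1}$ is the same in both formulations, only the local problems need comparing.

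The first step is to rewrite the right-hand side of \eqref{e:parallelVar}. By Lemma \ref{lem:boundary_imp} with $\bw=\bu^n\in\bU(\Omega)$,
\begin{equation*}
a_{\Omega_\ell}(\bu^n,\bv_\ell)-\alpha(\bu^n,\mathcal{R}_\ell^\top\bv_\ell)=\langle \imp{\ell}{\bu^n},(\bv_\ell)_T\rangle_{\partial\Omega_\ell\setminus\partial\Omega}.
\end{equation*}
Using the definition \eqref{e:F} of $F$ and the fact that $\mathcal{R}_\ell^\top\bv_\ell$ vanishes outside $\Omega_\ell$, we get
\begin{equation*}
F(\mathcal{R}_\ell^\top\bv_\ell)=(\mathbf{f},\bv_\ell)_{\Omega_\ell}+\langle \mathbf{g}_T,(\bv_\ell)_T\rangle_{\partial\Omega_\ell\cap\partial\Omega}.
\end{equation*}
Here the traces of $\mathcal{R}_\ell^\top\bv_\ell$ on $\partial\Omega$ coincide with those of $\bv_\ell$ on $\partial\Omega_\ell\cap\partial\Omega$. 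Hence \eqref{e:parallelVar} reads
\begin{equation*}
a_{\Omega_\ell}(\bu_\ell^{n+1},\bv_\ell)=(\mathbf{f},\bv_\ell)_{\Omega_\ell}+\langle \mathbf{g}_T,(\bv_\ell)_T\rangle_{\partial\Omega_\ell\cap\partial\Omega}+\langle \imp{\ell}{\bu^n},(\bv_\ell)_T\rangle_{\partial\Omega_\ell\setminus\partial\Omega}.
\end{equation*}
This is exactly the standard weak form of an interior impedance problem on $\Omega_\ell$. Its impedance data is $\mathbf{g}_T$ on $\partial\Omega_\ell\cap\partial\Omega$ and $\imp{\ell}{\bu^n}$ on $\partial\Omega_\ell\setminus\partial\Omega$, with $\widetilde\lambda$ playing the role of $\lambda$ on $\partial\Omega_\ell$. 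The data lie in $\bL^2_T$ by Lemma \ref{lem:bound imp}(i).

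For the direction strong $\Rightarrow$ weak, take $\bu_\ell^{n+1}$ solving \eqref{eq:iterative-strong-local}. By Lemma \ref{lem:well pose} it lies in $\bU(\Omega_\ell)$. Multiply the PDE by $\overline{\bv_\ell}$ and integrate by parts using Green's formula \cite[Theorem 3.31]{monk2003finite}. The boundary term is $\langle\mu^{-1}\curl\bu_\ell^{n+1}\times\bn_\ell,(\bv_\ell)_T\rangle_{\partial\Omega_\ell}$. Split it into its impedance part $\imp{\ell}{\bu_\ell^{n+1}}$ and the term $-\ri k\widetilde\lambda$ absorbed into $a_{\Omega_\ell}$. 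Then substitute the two boundary conditions; this gives the displayed weak form.

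For the direction weak $\Rightarrow$ strong, first note that the weak problem has at most one solution in $\bX_{\iimp}(\Omega_\ell)$, by Lemma \ref{lem:stability} applied on $\Omega_\ell$. Then Lemma \ref{lem:well pose} produces a strong solution, which by the previous paragraph is also a weak solution. So the two solutions coincide.

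The only delicate point is justifying the integration by parts when the test function is only in $\bX_{\iimp}(\Omega_\ell)$ and the boundary traces are only $L^2$. This is handled by the density and duality argument already implicit in Lemma \ref{lem:boundary_imp}, applied on $\Omega_\ell$. A second point to check is that restrictions of $\bu^n$ lie in $\bU(\Omega_\ell)$, which is guaranteed by Corollary \ref{cor:key1}.
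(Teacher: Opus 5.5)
Your proposal is correct and follows the paper's (implicit) route: Lemma~\ref{lem:boundary_imp} converts $a_{\Omega_\ell}(\bu^n,\bv_\ell)-\alpha(\bu^n,\mathcal{R}_\ell^\top\bv_\ell)$ into $\langle\imp{\ell}{\bu^n},(\bv_\ell)_T\rangle_{\partial\Omega_\ell\setminus\partial\Omega}$, so \eqref{e:parallelVar} is just the weak form of the local impedance problem \eqref{eq:iterative-strong-local}. The paper states the lemma with no proof beyond this, and your existence/uniqueness step makes the equivalence explicit; one small caveat is that invoking Lemma~\ref{lem:well pose} tacitly uses $\difdiv\mathbf{f}\in L^2(\Omega)$, which the Green's-formula step itself does not need.
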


Using the operator $\mathcal{A}$, \eqref{e:parallelVar} is equivalent to 
\begin{equation*}
    \bu_\ell^{n+1} = \bu^{n}|_{\Omega_\ell} + A_\ell^{-1}\mathcal{R}_\ell(F-\mathcal{A}\bu^{n}).
\end{equation*}
With the extension operator 
related to the PoU \eqref{eq:chi} defined by
\begin{equation}\label{eq:tildeR}
\widetilde{\mathcal{R}}_\ell^\top\bv_\ell:=
    \begin{cases}
    \chi_\ell \bv_\ell  
    \quad&\mbox{on}~\Omega_\ell,\\
    \bm{0}\quad\quad&\mbox{elsewhere},
    \end{cases}
\end{equation}
the parallel Schwarz method of Definition \ref{def:Schwarz} 
is then equivalent to
	$$
	\bu^{n+1}=\sum_\ell \widetilde{\mathcal{R}}_\ell^\top\bu^{n+1}_\ell = \bu^{n} + \sum_\ell\widetilde{\mathcal{R}}_\ell^\top A_\ell^{-1}\mathcal{R}_\ell(F-\mathcal{A}\bu^{n}).
$$

\subsection{RAS-imp is a discrete version of the parallel Schwarz method}\label{sec:equivalence}

\paragraph{Finite element spaces and discrete operators.}
  Let $T^h$ be the triangulation of $\Omega$. Suppose each element $K\in T^h$ is associated with an element map which is a $C^1$-diffeomorphism $F_K:\overline{\hat{K}}\rightarrow\overline{K}$ from the reference tetrahedron $\hat{K}$, and $dF_K$ is the Jacobian matrix of the transformation. 
  Consider the N\'ed\'elec finite element space of the first type: 
	\begin{equation}
	\bm{U}_h(\Omega):=\Big\{\bu_h\in \bH(\curl;\Omega)\mid \bu_h|_K\in \mathbb{N}_{p,d} := \mathbb{P}_{p-1,d}\bigoplus\mathbb{S}_{p,d}, \quad \text{ for all } K\in T^h\Big\},~d=2,3,
	\end{equation}
where  
$$
\mathbb{S}_{p,d} := \big\{q(\mathbf{x})\in \widetilde{\mathbb{P}}_{p,d}\mid q(\mathbf{x})\cdot\mathbf{x}=0\big\}$$
and $\widetilde{\mathbb{P}}_{p,d}$ denotes the space of homogeneous polynomials of degree exactly $p$ in $d$ variables, while $\mathbb{P}_{p-1,d}$ denotes the space of polynomials of total degree at most $p-1$ in $d$ variables.

For simplicity of notation, we restrict our presentation to the lowest order
Nédélec element in this paper, although the construction can be extended to
higher-order elements. 
We denote by \(\mathcal E_h\) the set of all oriented edges of the mesh
\(T_h\). 
The local edge set associated with \(\Omega_\ell\) is defined by
\[
    \mathcal E_{h,\ell}
    :=
    \{e\in\mathcal E_h:\ e \text{ is an edge of an element }
    K\subset \Omega_\ell\}.
\]
The orientation of each local edge is inherited from the corresponding global
edge orientation. For each \(e\in\mathcal E_h\), \(M_e(\cdot)\) denotes the
corresponding edge degree of freedom:
\begin{equation*}
M_{e}(\bm{u})=\frac{1}{\text{length}(e)}\int_{e}\bm{u}\cdot\bm{\tau}  d s.
\end{equation*}

The Galerkin discretization of the Maxwell impedance problem on a general domain \(D\) is given by: find
\(\bu_h\in\bm U_h(D)\) such that
\[
    a_D(\bu_h,\bv_h)=F(\bv_h),
    \quad \quad \text{ for all } \bv_h\in\bm U_h(D).
\]

 For simplicity of notation, we denote $\bm{U}_h:=\bm{U}_h(\Omega)$, $\bm{U}_{h,j}:=\bm{U}_h(\Omega_j)$. We introduce the discrete operators  $A_h:\bm{U}_h\rightarrow \bm{U}'_h$, and  $A_{h,j}:\bm{U}_{h,j}\rightarrow \bm{U}_{h,j}'$ by
  \begin{align}
    (A_{h,j}\bv_{h,j}, \bw_{h,j}):&=a_{\Omega_j}(\bv_{h,j},\bw_{h,j}) \quad \text{ for all }\bv_{h,j},\bw_{h,j}\in \bm{U}_{h,j},\label{eq:dis_local}\\
    (A_h\bv_{h},\bw_{h}):&=a(\bv_h,\bw_h)\quad\qquad \text{ for all }\bv_h,\bw_h\in \bm{U}_h.\label{eq:dis_global}
\end{align}
Let $F_h\in \bm{U}_{h}'$ be defined by 
\begin{equation*}
    F_h(\bv_h) := F(\bv_h)\quad\text{for all }\bv_h\in \bm{U}_h.
\end{equation*}

\paragraph{Restriction, prolongation, and weighted prolongation.}
Let the prolongation operator $\mathcal{R}^\textrm{T}_{h,D}:\bm{U}_h(D)\rightarrow \bm{U}_h(\Omega)$ satisfy
\begin{equation*}
M_e(\mathcal{R}^\textrm{T}_{h,\ell}\bv_{h,\ell}) = \begin{cases}
        M_e(\bv_{h,\ell}), &\text{if } e\in \mathcal E_{h,\ell},\\
        0, &\text{if } e\not\in  \mathcal E_{h,\ell},
    \end{cases} 
\end{equation*}
and let the corresponding restriction operator $\mathcal{R}_{h, \ell}: \bm{U}_h'\rightarrow \bm{U}_{h,\ell}'$ be defined by
\begin{equation}
        (\mathcal{R}_{h, \ell}G)(\bv_{h,\ell}) := G(\mathcal{R}^{\textrm{T}}_{h, \ell}\bv_{h,\ell}),\quad G\in \bm{U}_h', \bv_{h,\ell}\in \bm{U}_{h,j}.
        \label{eqn:discrete-r}
\end{equation}

Let the weighted prolongation operator $\widetilde{\mathcal{R}}_{h,\ell}^\textrm{T}:\bm{U}_{h,\ell}\rightarrow \bm{U}_h(\Omega)$ be defined by
\begin{equation*}
M_e(\widetilde{\mathcal{R}}^\textrm{T}_{h,\ell}\bv_{h,\ell}) =
        \chi_{h,\ell,e}M_e(\bv_{h,\ell}).
\end{equation*}
where for each edge \(e\in\mathcal E_h\), $\chi_{h,\ell,e}$ satisfy
\begin{equation*}
    \sum_{\ell=1}^N \chi_{h,\ell,e} = 1;
\end{equation*}
one can set, e.g., $\chi_{h,\ell,e} = \chi_\ell(m_e)$ with $m_e$ being the midpoint of the edge $e$. For a global function \(\bv_h\), we use \(\bv_{h,\ell}\) in what follows to denote its
restriction to \(\Omega_\ell\). Then the edge-based partition-of-unity weights
imply
\begin{equation}\label{eq:tildeRh}
    \sum_{\ell=1}^N 
    \widetilde{\mathcal{R}}_{h,\ell}^{\textrm T}(\bv_{h,\ell}) 
    = \bv_h ,
\end{equation}

\paragraph{Discrete parallel Schwarz iteration.}
For any $\bv\in \mathbf{X}_{\text{imp}}(\Omega)$ and $\bw_{h,\ell}\in \bm{U}_{h,\ell}$, let
\begin{equation}\label{eqn:def-b}
	b_{h,\ell}(\bv,\bw_{h,\ell}) := a_{\Omega_\ell}(\bv,\bw_{h,\ell}) - a(\bv, \mathcal{R}^{\top}_{h,\ell}\bw_{h,\ell}).
\end{equation}
$b_{h,\ell}$ is indeed a boundary integral by the following proposition.
\begin{proposition}
\label{prop:b}
For $(\bv,\bw_{h,\ell})\in \mathbf{X}_{\text{imp}}(\Omega)  \times \bm{U}_{h,\ell}$,
          \begin{equation}\label{eqn:prop-b}
     b_{h,\ell}(\bv, \bw_{h,\ell}) = -\widetilde{a}_{\Omega\setminus\Omega_\ell}(\bv, \mathcal{R}^\top_{h,\ell}\bw_{h,\ell}) + ik\langle \lambda\bv_T, (\mathcal{R}^\top_{h,\ell}\bw_{h,\ell})_T\rangle_{\partial\Omega\setminus\partial\Omega_\ell} - ik\langle\lambda\bv_T, (\bw_{h,\ell})_T  \rangle_{\Gamma_\ell},
          \end{equation}
          where 
\begin{equation*}
\widetilde a_D(\bu,\bv)
:=
(\mu^{-1}\curl\bu,\curl\bv)_D
-
k^2(\epsilon\bu,\bv)_D.
\end{equation*}
	  Furthermore, if $\bw_{h,\ell}\in \bH_{0,\Gamma_\ell}({\rm curl},\Omega_\ell)$ where
\begin{equation}\label{eq:H0G}
          \bH_{0,\Gamma_\ell}({\rm curl},\Omega_\ell):=\{\bv\in \bH(\curl,\Omega_\ell)\mid \bv_T=\bm{0}\text{ on }\Gamma_\ell\},
      \end{equation}
      then 
	  \begin{equation*}
	  	b_{h,\ell}(\bv,\bw_{h,\ell}) = 0.
	  \end{equation*}
\end{proposition}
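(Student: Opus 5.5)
The identity \eqref{eqn:prop-b} follows by expanding both terms in the definition \eqref{eqn:def-b} of $b_{h,\ell}$ using \eqref{eq:bilinear_form}, and then exploiting that $\mathcal{R}^\top_{h,\ell}\bw_{h,\ell}$ coincides with $\bw_{h,\ell}$ on $\Omega_\ell$. Throughout, $\Gamma_\ell$ denotes the artificial interface $\partial\Omega_\ell\setminus\partial\Omega$.

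First, I would record what the prolongation looks like. Since $\mathcal{R}^\top_{h,\ell}\bw_{h,\ell}$ keeps the edge degrees of freedom on $\mathcal E_{h,\ell}$, it equals $\bw_{h,\ell}$ on every element $K\subset\Omega_\ell$. This assumes the mesh resolves $\Omega_\ell$, so that $\Omega_\ell$ is a union of elements. Consequently the volume integrals over $\Omega$ split as $\widetilde a_\Omega(\bv,\mathcal{R}^\top_{h,\ell}\bw_{h,\ell}) = \widetilde a_{\Omega_\ell}(\bv,\bw_{h,\ell}) + \widetilde a_{\Omega\setminus\Omega_\ell}(\bv,\mathcal{R}^\top_{h,\ell}\bw_{h,\ell})$. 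Outside $\Omega_\ell$ the prolongation is nonzero only on elements with an edge in $\mathcal E_{h,\ell}$, but no further structure is needed there.

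Second, I would split the boundary terms. In $a_{\Omega_\ell}(\bv,\bw_{h,\ell})$ the boundary is $\partial\Omega_\ell = (\partial\Omega_\ell\cap\partial\Omega)\cup\Gamma_\ell$. In $a(\bv,\mathcal{R}^\top_{h,\ell}\bw_{h,\ell})$ the boundary is $\partial\Omega = (\partial\Omega\cap\partial\Omega_\ell)\cup(\partial\Omega\setminus\partial\Omega_\ell)$. On $\partial\Omega\cap\partial\Omega_\ell$ the tangential traces of $\mathcal{R}^\top_{h,\ell}\bw_{h,\ell}$ and $\bw_{h,\ell}$ agree, since the trace is taken from elements inside $\Omega_\ell$. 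Those contributions therefore cancel in the difference, as do the $\widetilde a_{\Omega_\ell}$ volume parts. What survives is $-\widetilde a_{\Omega\setminus\Omega_\ell}$, $+ik\langle\lambda\bv_T,(\mathcal{R}^\top_{h,\ell}\bw_{h,\ell})_T\rangle_{\partial\Omega\setminus\partial\Omega_\ell}$, and $-ik\langle\lambda\bv_T,(\bw_{h,\ell})_T\rangle_{\Gamma_\ell}$. These are exactly the three terms of \eqref{eqn:prop-b}, with $\lambda$ on $\Gamma_\ell$ read as $\widetilde\lambda$.

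For the second claim, I would use that $(\bw_{h,\ell})_T=\bm 0$ on $\Gamma_\ell$ means every edge degree of freedom of $\bw_{h,\ell}$ on $\Gamma_\ell$ vanishes. Hence $\mathcal{R}^\top_{h,\ell}\bw_{h,\ell}$ is supported in $\overline{\Omega_\ell}$. To see this, note that the nonzero degrees of freedom sit on edges interior to $\Omega_\ell$ or on $\partial\Omega\cap\partial\Omega_\ell$, and any element outside $\Omega_\ell$ has edges only on $\Gamma_\ell$ or outside $\overline{\Omega_\ell}$. Such an element also has no edges in $\partial\Omega\cap\partial\Omega_\ell$ except those lying on $\overline{\Gamma_\ell}$. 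Then all three terms vanish: the first two because the prolongation is zero on $\Omega\setminus\Omega_\ell$ and on $\partial\Omega\setminus\partial\Omega_\ell$, and the third by assumption.

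The main obstacle is this support argument, namely making precise that a Nédélec function on an element outside $\Omega_\ell$ is zero when all its Nédélec degrees of freedom vanish. This relies on unisolvence and on the fact that the only edges of $\mathcal E_{h,\ell}$ belonging to outside elements lie on $\overline{\Gamma_\ell}$. I would state the mesh-conformity assumption explicitly to make this step rigorous.
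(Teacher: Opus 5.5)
Your proposal is correct and follows the paper's proof. You expand $b_{h,\ell}$ via \eqref{eqn:def-b} and cancel the contributions on $\Omega_\ell$ and on $\partial\Omega\cap\partial\Omega_\ell$, using $\mathcal{R}^\top_{h,\ell}\bw_{h,\ell}=\bw_{h,\ell}$ there; for the second claim you use that $(\bw_{h,\ell})_T=\bm{0}$ on $\Gamma_\ell$ forces $\mathcal{R}^\top_{h,\ell}\bw_{h,\ell}=\bm{0}$ outside $\overline{\Omega_\ell}$, and your unisolvence and mesh-conformity argument simply spells out the support step that the paper asserts in one line.
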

\begin{proof}
\eqref{eqn:prop-b} follows from the definition of $a_{\Omega_\ell}$ and $a$, and the fact that $\mathcal{R}_{h,\ell}^\top\bw_{h,\ell}$ coincides with $\bw_{h,\ell}$ in $\Omega_\ell$.
If $\bw_{h,\ell}\in \bH_{0,\Gamma_\ell}({\rm curl},\Omega_\ell)$ then $(\bw_{h,\ell})_T=\bm{0}$ on $\Gamma_\ell$.
We thus have $\mathcal{R}_{h,\ell}^\top\bw_{h,\ell}=\bm{0}$ in $\overline{\Omega}\setminus\overline{\Omega_\ell}$, 
all three terms in \eqref{eqn:prop-b}
 vanish.
 \end{proof}

\begin{assumption}[Local well-posedness]\label{assump:local-well-posedness}
For each subdomain \(\Omega_\ell\), the local discrete operator
\(A_{h,\ell}:\bm U_{h,\ell}\to \bm U_{h,\ell}'\)
is invertible.
\end{assumption}
 
\begin{definition}[Discrete parallel Schwarz method]\label{def:discrete}
          Given $\bu^n_{h}\in \bm{U}_h(\Omega)$, 
          let $\bu_{h,\ell}^{n+1}\in \bm{U}_{h,\ell}$ be the solution of the variational problem
\begin{equation}
	a_{\Omega_\ell}(\bu_{h,\ell}^{n+1}, \bv_{h,\ell}) = F_h(\mathcal{R}^\textrm{T}_{h, \ell}\bv_{h,\ell}) + b_{h,\ell}(\bu_h^n,\bv_{h,\ell}), \quad\text{ for all } \bv_{h,\ell}\in \bm{U}_{h,\ell},
          \label{eqn:discrete-local}
\end{equation}
and then set
\begin{equation}
	  \bu_h^{n+1} := \sum_{\ell=1}^N \widetilde{\mathcal{R}}^\textrm{T}_{h, \ell}\bu_{h,\ell}^{n+1}.\label{eqn:discrete-combination}
\end{equation}
\end{definition}

\begin{lemma}[Equivalence between discrete parallel Schwarz and RAS-imp Richardson]
\label{lem:discrete-schwarz-ras}
Assume that the local problems are well-posed (Assumption \ref{assump:local-well-posedness}). Then the discrete parallel
Schwarz iteration defined by \eqref{eqn:discrete-local}--\eqref{eqn:discrete-combination}
coincides with the one-level RAS-imp preconditioned Richardson iteration
\begin{equation}\label{eq:Richardson}
\bu_h^{n+1}
=
\bu_h^n
+
B_h^{-1}(F_h-A_h\bu_h^n),
\quad \text{ where } \quad
B_h^{-1}
=
\sum_{\ell=1}^N
\widetilde{\mathcal R}_{h,\ell}^{\top}
A_{h,\ell}^{-1}
\mathcal R_{h,\ell}.
\end{equation}
\end{lemma}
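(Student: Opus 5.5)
The proof is a direct algebraic rewriting of \eqref{eqn:discrete-local}--\eqref{eqn:discrete-combination}, mirroring the continuous computation at the end of \S\ref{sec:5.1}. The plan is to show that each local iterate can be written as
\[
\bu_{h,\ell}^{n+1} = \bu_{h,\ell}^n + A_{h,\ell}^{-1}\mathcal{R}_{h,\ell}(F_h - A_h\bu_h^n),
\]
where $\bu_{h,\ell}^n$ denotes the restriction of $\bu_h^n$ to $\Omega_\ell$. One then sums with the weighted prolongations and uses the partition-of-unity identity \eqref{eq:tildeRh}.

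\textbf{Step 1.} Note that $\bu_h^n\in\bm U_h\subset \mathbf X_{\iimp}(\Omega)$, because N\'ed\'elec functions are piecewise polynomial and so have $L^2$ tangential traces. Hence $b_{h,\ell}(\bu_h^n,\cdot)$ is well defined by \eqref{eqn:def-b}. Moreover, the restriction $\bu_{h,\ell}^n$ lies in $\bm U_{h,\ell}$, since $\Omega_\ell$ is assumed to be resolved by the mesh. Substituting \eqref{eqn:def-b} into \eqref{eqn:discrete-local} gives, for all $\bv_{h,\ell}\in\bm U_{h,\ell}$,
\[
a_{\Omega_\ell}(\bu_{h,\ell}^{n+1}-\bu_{h,\ell}^n,\bv_{h,\ell}) = F_h(\mathcal R_{h,\ell}^\top\bv_{h,\ell}) - a(\bu_h^n,\mathcal R_{h,\ell}^\top\bv_{h,\ell}).
\]
Here we use that $a_{\Omega_\ell}(\bu_h^n,\bv_{h,\ell})$ depends only on $\bu_h^n|_{\Omega_\ell}=\bu_{h,\ell}^n$.

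\textbf{Step 2.} Identify both sides operator-wise. By \eqref{eq:dis_local}, the left-hand side is $(A_{h,\ell}(\bu_{h,\ell}^{n+1}-\bu_{h,\ell}^n),\bv_{h,\ell})$. By \eqref{eq:dis_global} and \eqref{eqn:discrete-r}, the right-hand side is $(\mathcal R_{h,\ell}(F_h-A_h\bu_h^n))(\bv_{h,\ell})$. Assumption \ref{assump:local-well-posedness} then gives the displayed local update formula.

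\textbf{Step 3.} Apply $\widetilde{\mathcal R}_{h,\ell}^\top$ and sum over $\ell$. By \eqref{eq:tildeRh}, $\sum_\ell\widetilde{\mathcal R}_{h,\ell}^\top\bu_{h,\ell}^n=\bu_h^n$, which yields \eqref{eq:Richardson}. Each step is reversible given local invertibility, so the two iterations coincide.

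The main point requiring care is Step 1. It needs $\mathcal R_{h,\ell}^\top\bv_{h,\ell}$ to be a genuine element of $\bm U_h$ (a conforming N\'ed\'elec function, obtained by zeroing the degrees of freedom of edges outside $\mathcal E_{h,\ell}$) that agrees with $\bv_{h,\ell}$ on $\Omega_\ell$. It also needs $\bu_h^n$ to lie in the domain of $b_{h,\ell}$. Both follow from the edge-DOF definitions, provided the subdomains are unions of mesh elements.
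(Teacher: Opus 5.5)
Your proposal is correct and follows the paper's proof. Both rewrite the local problem \eqref{eqn:discrete-local} as $A_{h,\ell}\bigl(\bu_{h,\ell}^{n+1}-\bu_h^n|_{\Omega_\ell}\bigr)=\mathcal R_{h,\ell}(F_h-A_h\bu_h^n)$, invert using Assumption~\ref{assump:local-well-posedness}, and sum with the weighted prolongations via the partition-of-unity identity (your \eqref{eq:tildeRh} is the paper's \eqref{e:discretePOU}, where the paper writes $\mathcal R_{h,\ell}\bu_h^n$ for the restriction). One caution on notation: writing $\bu_{h,\ell}^n$ for $\bu_h^n|_{\Omega_\ell}$ clashes with the local iterate $\bu_{h,\ell}^n$ of Definition~\ref{def:discrete}, which in general differs from it on the overlaps, so a separate symbol such as $\bu_h^n|_{\Omega_\ell}$ is safer.
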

\begin{proof}
Using the definitions of \(A_{h,\ell}\) and \(A_h\) in
\eqref{eq:dis_local} and \eqref{eq:dis_global}, respectively, together with
the extension \eqref{eq:dis_ext} and restriction operators and the identity for
\(b_{h,\ell}\) in \eqref{eqn:def-b}, the local Schwarz problem
\eqref{eqn:discrete-local} can be written in operator form as
\[
    A_{h,\ell}\bu_{h,\ell}^{n+1}
    =
    \mathcal R_{h,\ell}F_h
    +
    A_{h,\ell}\mathcal R_{h,\ell}\bu_h^n
    -
    \mathcal R_{h,\ell}A_h\bu_h^n .
\]
By Assumption \ref{assump:local-well-posedness}, \(A_{h,\ell}\) is invertible, so that 
\[
    \bu_{h,\ell}^{n+1}
    =
    \mathcal R_{h,\ell}\bu_h^n
    +
    A_{h,\ell}^{-1}
    \mathcal R_{h,\ell}
    (F_h-A_h\bu_h^n).
\]
Substituting this expression into the combination formula
\eqref{eqn:discrete-combination}, we obtain
\[
\begin{aligned}
    \bu_h^{n+1}
    &=
    \sum_{\ell=1}^N
    \widetilde{\mathcal R}_{h,\ell}^{\top}
    \mathcal R_{h,\ell}\bu_h^n
    +
    \sum_{\ell=1}^N
    \widetilde{\mathcal R}_{h,\ell}^{\top}
    A_{h,\ell}^{-1}
    \mathcal R_{h,\ell}
    (F_h-A_h\bu_h^n)=
    \bu_h^n
    +
    B_h^{-1}(F_h-A_h\bu_h^n),
\end{aligned}
\]
where we used that $\widetilde{\mathcal R}_{h,\ell}^{\top}$ 
\eqref{eq:tildeR} satisfies 
partition-of-unity identity
\begin{equation}
        \sum_{\ell=1}^N
    \widetilde{\mathcal R}_{h,\ell}^{\top}
    \mathcal R_{h,\ell}
    =
    I .
    \label{e:discretePOU}
\end{equation}
\end{proof}

\subsection{The discrete error propagation operator}
In this subsection, we derive an operator representation of the error
propagation for the discrete parallel Schwarz iteration. The construction is
based on the local error equation and the associated discrete Maxwell-harmonic
spaces.

Define the local discrete errors and global discrete error by
\begin{equation}
    \label{eq:localDiscrete}
\be_{h,\ell}^n:=\bu_h|_{\Omega_\ell} - \bu_{h,\ell}^n\quad\text{and}\quad \be_h^n :=\bu_h - \bu_h^n.
\end{equation}
The following proposition gives the local error equation, where the new local
error is driven by the previous global error through \(b_{h,\ell}\).
\begin{proposition}[Variational characterisation of the discrete error]\label{prop:ab}
Given $\be_h^n$, 
the local discrete error $\be_{h,\ell}^{n+1}$
satisfies the variational problem
        \begin{equation*}
                a_{\Omega_\ell}(\be_{h,\ell}^{n+1}, \bv_{h,\ell}) = b_{h,\ell}(\be_h^n, \bv_{h,\ell})
\quad\text{ for all }\bv_{h,\ell}\in \bm{U}_{h,\ell},
        \end{equation*}
        and then
        \begin{equation*}
        \be_h^{n+1} = \sum_{\ell=1}^N \widetilde{\mathcal{R}}_{h,\ell}^\top\be_{h,\ell}^{n+1}.
        \end{equation*}
\end{proposition}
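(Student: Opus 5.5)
We argue by subtraction, comparing the discrete Schwarz iteration of Definition \ref{def:discrete} with the global Galerkin solution $\bu_h\in\bm U_h$, which satisfies $a(\bu_h,\bw_h)=F_h(\bw_h)$ for all $\bw_h\in\bm U_h$. First we show that $\bu_h|_{\Omega_\ell}$ satisfies the same local variational problem \eqref{eqn:discrete-local}, but with $\bu_h^n$ replaced by $\bu_h$. That is, we claim that
\begin{equation*}
a_{\Omega_\ell}(\bu_h|_{\Omega_\ell},\bv_{h,\ell}) = F_h(\mathcal R^\top_{h,\ell}\bv_{h,\ell}) + b_{h,\ell}(\bu_h,\bv_{h,\ell})\quad\text{for all }\bv_{h,\ell}\in\bm U_{h,\ell}.
\end{equation*}
This follows from the definition \eqref{eqn:def-b} of $b_{h,\ell}$: the right-hand side equals $F_h(\mathcal R^\top_{h,\ell}\bv_{h,\ell})+a_{\Omega_\ell}(\bu_h,\bv_{h,\ell})-a(\bu_h,\mathcal R^\top_{h,\ell}\bv_{h,\ell})$. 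Since $\mathcal R^\top_{h,\ell}\bv_{h,\ell}\in\bm U_h$ is an admissible test function, Galerkin orthogonality gives $a(\bu_h,\mathcal R^\top_{h,\ell}\bv_{h,\ell})=F_h(\mathcal R^\top_{h,\ell}\bv_{h,\ell})$. The $F_h$ terms then cancel, and what remains is $a_{\Omega_\ell}(\bu_h,\bv_{h,\ell})$. We should also check that $a_{\Omega_\ell}(\bu_h,\cdot)$ depends only on $\bu_h|_{\Omega_\ell}$, which it does since $a_{\Omega_\ell}$ involves only integrals over $\Omega_\ell$ and $\partial\Omega_\ell$.

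Next we subtract \eqref{eqn:discrete-local} for $\bu_{h,\ell}^{n+1}$ from this identity. Both $a_{\Omega_\ell}(\cdot,\bv_{h,\ell})$ and $b_{h,\ell}(\cdot,\bv_{h,\ell})$ are linear in their first argument, so we obtain
\begin{equation*}
a_{\Omega_\ell}(\bu_h|_{\Omega_\ell}-\bu_{h,\ell}^{n+1},\bv_{h,\ell}) = b_{h,\ell}(\bu_h-\bu_h^n,\bv_{h,\ell}).
\end{equation*}
By the definitions \eqref{eq:localDiscrete}, this is exactly the stated local error equation. Here $b_{h,\ell}$ is applied to discrete functions in $\bm U_h\subset\mathbf X_{\iimp}(\Omega)$; this inclusion holds because N\'ed\'elec functions are piecewise polynomial, so their tangential traces are in $\bL^2$.

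For the global error, we apply the partition-of-unity identity \eqref{eq:tildeRh} to $\bv_h=\bu_h$, which gives $\bu_h=\sum_\ell\widetilde{\mathcal R}^\top_{h,\ell}(\bu_h|_{\Omega_\ell})$. Subtracting \eqref{eqn:discrete-combination} and using the linearity of $\widetilde{\mathcal R}^\top_{h,\ell}$ then gives $\be_h^{n+1}=\sum_\ell\widetilde{\mathcal R}^\top_{h,\ell}\be_{h,\ell}^{n+1}$.

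The main point to check is the first step: that $\mathcal R^\top_{h,\ell}\bv_{h,\ell}$ genuinely lies in $\bm U_h(\Omega)$, so that Galerkin orthogonality may be applied to it. This holds because extending edge degrees of freedom by zero produces a conforming N\'ed\'elec function. Everything else in the argument is linear algebra.
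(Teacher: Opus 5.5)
Your proposal is correct and takes essentially the same approach as the paper. Both arguments rest on the same ingredients: Galerkin orthogonality tested against $\mathcal{R}^\top_{h,\ell}\bv_{h,\ell}\in\bm{U}_h$, the definition \eqref{eqn:def-b} of $b_{h,\ell}$, linearity in the first argument, and the partition-of-unity identity \eqref{eq:tildeRh}. The difference is only in packaging: you first show that $\bu_h|_{\Omega_\ell}$ is a fixed point of the local problem \eqref{eqn:discrete-local} and then subtract, whereas the paper adds and subtracts $\bu_h^n|_{\Omega_\ell}$ inside $a_{\Omega_\ell}$ directly.
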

\begin{proof}
    By the definition of $\be_{h,\ell}^{n+1}$ \eqref{eq:localDiscrete},
        $$
            a_{\Omega_\ell}(\be_{h,\ell}^{n+1}, \bv_{h,\ell})  = a_{\Omega_\ell}(\bu_h|_{\Omega_\ell}-\bu_{h,\ell}^{n+1}, \bv_{h,\ell})\\
            =a_{\Omega_\ell}( (\bu_h-\bu_h^n)|_{\Omega_\ell}, \bv_{h,\ell}) - a_{\Omega_\ell}(\bu_{h,\ell}^{n+1}-\bu_h^n|_{\Omega_\ell}, \bv_{h,\ell}).
        $$
    For the second term on the right-hand side, by \eqref{eqn:discrete-local}, the definition of $b_{h,\ell}$ \eqref{eqn:def-b}  and the fact that $\bu_h$ is the solution to the discrete variational problem,
    $$      a_{\Omega_\ell}(\bu_{h,\ell}^{n+1}-\bu_h^n|_{\Omega_\ell}, \bv_{h,\ell})=a(\bu_h-\bu_h^n, \mathcal{R}^\top_{h,\ell}\bv_{h,\ell}) \quad \text{ for all } \bv_{h,\ell}\in \bm{U}_{h,\ell}.
        $$
        Thus, 
\begin{align*}
            a_{\Omega_\ell}(\be_{h,\ell}^{n+1}, \bv_{h,\ell}) = a_{\Omega_\ell}( (\bu_h-\bu_h^n)|_{\Omega_\ell}, \bv_{h,\ell}) - a(\bu_h-\bu_h^n, \mathcal{R}^\top_{h,\ell}\bv_{h,\ell})= b_{h,\ell}(\be_h^n, \bv_{h,\ell}),
        \end{align*}
by \eqref{eqn:def-b}.
     Finally, by \eqref{eq:tildeRh} and \eqref{eqn:discrete-combination}, $$\be_h^{n+1}  = \sum_{\ell=1}^N \widetilde{\mathcal{R}}_{h,\ell}^\top(\bu_h|_{\Omega_\ell} - \bu_{h,\ell}^{n+1}) = \sum_{\ell=1}^N \widetilde{\mathcal{R}}_{h,\ell}^\top\be_{h,\ell}^{n+1}.$$
\end{proof}


If \(\bw_{h,\ell}\in \bm U_{h,\ell}\cap
\bH_{0,\Gamma_\ell}(\curl,\Omega_\ell)\), then Proposition~\ref{prop:b}
implies \(b_{h,\ell}(\be_h^n,\bw_{h,\ell})=0\). Hence
Proposition~\ref{prop:ab} shows that \(\be_{h,\ell}^{n+1}\) is discrete
Maxwell-harmonic in \(\Omega_\ell\). This motivates the following definition.

\begin{definition}[Discrete Maxwell-harmonic space]
Let $\bm{U}_{0,h,\ell}\subset \bm{U}_{h,\ell}$ be defined by
       \begin{equation*}
                \bm{U}_{0,h,\ell} := \{\bv_{h,\ell}\in \bm{U}_{h,\ell} \mid a_{\Omega_\ell}(\bv_{h,\ell}, \bw_{h,\ell})=0\text{ for all }\bw_{h,\ell}\in \bm{U}_{h,\ell}\cap \bH_{0,\Gamma_\ell}({\rm curl}, \Omega_\ell)\}.
        \end{equation*}
         Define the product space
        \begin{equation*}
           \mathbb{U}_{0,h}:=\prod_{\ell=1}^N \bm{U}_{0,h,\ell}.
        \end{equation*}
\end{definition}
Observe that, by the definition of $\bH_{0,\Gamma_\ell}({\rm curl})$ \eqref{eq:H0G}, a zero impedance boundary condition is imposed 
in $\bm{U}_{0,h,\ell}$ on $\partial\Omega\setminus \Gamma_\ell$.

Since functions in \(\bm{U}_{0,h,\ell}\) are uniquely characterized by their impedance data on the artificial interface \(\Gamma_\ell\), it is natural to quantify them using their interface values. A formal definition of the corresponding norm, which relies on the discrete impedance operator (Definition~\ref{def:discrete_imp}), will be introduced later in Definition~\ref{def:discrete_norm}.

By Proposition \ref{prop:ab}, 
$$
    a_{\Omega_\ell}(\be_{h,\ell}^{n+1}, \bv_{h,\ell}) = b_{h,\ell}\bigg(\sum_{j=1}^N\widetilde{\mathcal{R}}_{h,j}^\top \be_{h,j}^n,\bv_{h,\ell}\bigg);
    $$
this motivates the definition of the following error transfer operator. 
\begin{definition}[Discrete error propagation operator]\label{def:error_prop}
        We define $\mathcal{T}_{h,\ell,j}:\bm{U}_{h,j}\rightarrow \bm{U}_{0,h,\ell}$. For $\bw_{h,j}\in \bm{U}_{h,j}$,  $\mathcal{T}_{h,\ell,j}\bw_{h,j}\in \bm{U}_{0, h,\ell}$ is the solution to
        \begin{equation}\label{eqn:discrete-error-prop}
                a_{\Omega_\ell}(\mathcal{T}_{h,\ell,j}\bw_{h,j}, \bv_{h,\ell}) = b_{h,\ell}(\widetilde{\mathcal{R}}_{h,j}^\top \bw_{h,j},\bv_{h,\ell}), \quad\text{for all }\bv_{h,\ell}\in \bm{U}_{h,\ell}.
        \end{equation}
\end{definition}

Just as at the continuous level, we restrict attention to strip decompositions.
Under this assumption, the discrete error transfer operator
\(\boldsymbol{\mathcal T}_h\) has the following block tridiagonal form (compare to \eqref{eq:splitT}):
   \begin{equation*}
  \boldsymbol{\mathcal{T}}_h=\begin{pmatrix}
  0               &       \mathcal{T}_{h,1,2} \\
  \mathcal{T}_{h,2,1}       &       0               &\mathcal{T}_{h,2,3}\\
                  &       \mathcal{T}_{h,3,2}       &0              &\mathcal{T}_{h,3,4}\\
                  &                       &       \ddots  &       \ddots& \ddots\\
                  &                       &       \mathcal{T}_{h,N-1,N-2}&          0       &\mathcal{T}_{h,N-1,N}\\
                  &                       &                       &       \mathcal{T}_{h,N,N-1}&0
  \end{pmatrix}.
  \end{equation*}
  We introduce the discrete error $\mathfrak{e}_h^n\in \mathbb{\bm{U}}_{0,h}$ as  
  $$
  \mathfrak{e}_h^n := (\be_{h,1}^n, \be_{h,2}^n, \dotsc, \be_{h,N}^n)^\top.
  $$
  Then
\begin{equation}\label{eqn:discrete-error-prop-h}
    \mathfrak{e}_h^n = \boldsymbol{\mathcal{T}}_h\mathfrak{e}_h^{n-1}.
\end{equation}
The operator $\boldsymbol{\mathcal T}_h$ describes the discrete error propagation in the product space of local discrete Maxwell-harmonic functions; its relationship to the usual algebraic error propagation operator of the RAS-imp
preconditioned Richardson iteration is discussed in \S\ref{sec:num3}.
  
Recall from \eqref{eq:Gamma+_}
the notation that, for a strip decomposition, $$\Gamma_\ell:=\partial\Omega_\ell\setminus\partial\Omega$$
can be written as 
$\Gamma_\ell= \Gamma_\ell^-\cup \Gamma_\ell^+$.
To distinguish
the error components entering from the left and from the right, we decompose
the discrete Maxwell-harmonic space according to the side on which the
corresponding impedance data is supported.
\begin{definition}
For $s\in\{+,-\}$,
\begin{equation} \label{eqn:U0h}
    \bm{U}_{0,h,\ell}^{s}:=\left\{\bv_{h,\ell}\in \bm{U}_{0,h,\ell}\mid a_{\Omega_\ell}(\bv_{h,\ell}, \bw_{h,\ell})=0\text{ for all }\bw_{h,\ell}\in \bm{U}_{h,\ell}\cap\bH_{0,\Gamma_\ell^s}({\rm curl},\Omega_\ell)\right\}.
\end{equation}
\end{definition} 
The following result records the directionality of the error transfer: 
\begin{proposition}\label{prop:error_range}
    \begin{equation*}
        {\rm (i)} \quad\mathcal{T}_{h,\ell,\ell-1}:\bm{U}_{h,\ell-1}\rightarrow \bm{U}_{0,h,\ell}^-\quad\text{and}\quad{\rm (ii)}\quad\mathcal{T}_{h,\ell,\ell+1}:\bm{U}_{h,\ell+1}\rightarrow \bm{U}_{0,h,\ell}^+.
    \end{equation*}
    Moreover, $\bm{\cT}_h: \mathbb{U}_h\to\mathbb{U}_{0,h}$.
\end{proposition}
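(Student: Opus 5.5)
To prove (i), I fix $\bw_{h,\ell-1}\in\bm{U}_{h,\ell-1}$ and set $\bz:=\mathcal{T}_{h,\ell,\ell-1}\bw_{h,\ell-1}$. By the definition \eqref{eqn:U0h} of $\bm{U}^-_{0,h,\ell}$, it suffices to show that $a_{\Omega_\ell}(\bz,\bv_{h,\ell})=0$ for every $\bv_{h,\ell}\in\bm{U}_{h,\ell}\cap\bH_{0,\Gamma_\ell^-}(\curl,\Omega_\ell)$. By \eqref{eqn:discrete-error-prop},
\[
a_{\Omega_\ell}(\bz,\bv_{h,\ell})=b_{h,\ell}(\widetilde{\mathcal{R}}_{h,\ell-1}^\top\bw_{h,\ell-1},\bv_{h,\ell}).
\]
I will show that this right-hand side vanishes by examining each of the three terms in the representation \eqref{eqn:prop-b} of Proposition \ref{prop:b}, with $\bv:=\widetilde{\mathcal{R}}_{h,\ell-1}^\top\bw_{h,\ell-1}$.

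The key geometric input comes from the strip decomposition. The function $\bv$ has edge degrees of freedom $\chi_{h,\ell-1,e}M_e(\bw_{h,\ell-1})$, so it is supported, at the level of elements, in $\Omega_{\ell-1}$ and vanishes near $\Gamma_\ell^+$. Indeed, $\chi_{\ell-1}$ vanishes on $\Gamma_{\ell}^+\subset\Omega_{\ell+1}$, and $\Omega_{\ell-1}\cap\Omega_{\ell+1}=\emptyset$ by Definition \ref{def:strip}(iv). Meanwhile, $\bv_{h,\ell}$ has vanishing tangential trace on $\Gamma_\ell^-$. I then treat the terms of \eqref{eqn:prop-b} one at a time.
\begin{itemize}
\item[(a)] The boundary term $\langle\lambda\bv_T,(\bv_{h,\ell})_T\rangle_{\Gamma_\ell}$ splits into a contribution on $\Gamma_\ell^-$, which is zero because $(\bv_{h,\ell})_T=\bm{0}$ there, and a contribution on $\Gamma_\ell^+$, which is zero because $\bv_T=\bm{0}$ there.
\item[(b)] The volume term $\widetilde a_{\Omega\setminus\Omega_\ell}(\bv,\mathcal{R}^\top_{h,\ell}\bv_{h,\ell})$ vanishes if the supports of $\bv$ and of the zero extension $\mathcal{R}^\top_{h,\ell}\bv_{h,\ell}$ do not meet outside $\Omega_\ell$. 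The extension $\mathcal{R}^\top_{h,\ell}\bv_{h,\ell}$ is nonzero outside $\Omega_\ell$ only on the elements touching edges of $\Gamma_\ell$ where its degrees of freedom are nonzero. On $\Gamma_\ell^-$ these degrees of freedom are zero, since the tangential trace vanishes. So outside $\Omega_\ell$ the extension is supported in a one-element layer next to $\Gamma_\ell^+$, and $\bv$ is zero there.
\item[(c)] The term on $\partial\Omega\setminus\partial\Omega_\ell$ is handled by the same support argument as (b).
\end{itemize}
Part (ii) follows by the symmetric argument with the roles of $\pm$ and $\ell\pm1$ exchanged. The final claim that $\bm{\cT}_h:\mathbb{U}_h\to\mathbb{U}_{0,h}$ then follows because $\bm{U}^\pm_{0,h,\ell}\subset\bm{U}_{0,h,\ell}$ (the test space $\bH_{0,\Gamma_\ell}\subset\bH_{0,\Gamma_\ell^\pm}$). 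In addition, each block row of $\bm{\cT}_h$ is a sum $\mathcal{T}_{h,\ell,\ell-1}+\mathcal{T}_{h,\ell,\ell+1}$, and $\bm{U}_{0,h,\ell}$ is a linear space.

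The main obstacle is the discrete support bookkeeping in (b) and (c). Zero extension by edge degrees of freedom does not vanish exactly outside $\Omega_\ell$: it spills into the layer of elements adjacent to $\Gamma_\ell$. Likewise, the discrete partition-of-unity weights only make $\bv$ vanish on edges where $\chi_{\ell-1}(m_e)=0$. I therefore plan to make explicit a mild mesh assumption: $\Omega_\ell$ is resolved by the mesh, and $\chi_{\ell-1}$ vanishes on all edges of elements within one layer of $\Gamma_\ell^+$. This holds when $h$ is smaller than the overlap width, which is implicit in the strip setting. With this assumption, the products in (b) and (c) have disjoint supports elementwise and the terms vanish.
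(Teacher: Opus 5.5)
Your proposal is correct and follows essentially the same route as the paper. Like the paper, you test \eqref{eqn:discrete-error-prop} against $\bv_{h,\ell}\in\bm{U}_{h,\ell}\cap\bH_{0,\Gamma_\ell^-}(\curl,\Omega_\ell)$ and kill the three terms of Proposition~\ref{prop:b}, using that $\widetilde{\mathcal{R}}_{h,\ell-1}^\top\bw_{h,\ell-1}$ vanishes to the right of $\Omega_\ell$ (in particular on $\Gamma_\ell^+$) while $\mathcal{R}_{h,\ell}^\top\bv_{h,\ell}$ vanishes to the left of $\Omega_\ell$. The only difference is that you handle the discrete one-element support spill explicitly through a mesh-resolution assumption, whereas the paper absorbs it by asserting $\overline{\Omega}_j\cap\overline{\Omega}_{\ell-1}=\emptyset$ for $j\geq\ell+1$, so that $\widetilde{\mathcal{R}}_{h,\ell-1}^\top\bw_{h,\ell-1}$ vanishes on all of $\Omega_{\ell+1}$.
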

\begin{proof}
We show (i) only. Let $\bw_{h,\ell-1}\in \bU_{h,\ell-1}$, since $\overline{\Omega}_j\cap\overline{\Omega}_{\ell-1}=\emptyset$ for $j\geq \ell+1$, $\widetilde{\mathcal{R}}_{h,\ell-1}^\top\bw_{h,\ell-1}=\bm{0}$ on $\bigcup\limits_{j=\ell+1}^N \Omega_j$. Therefore, $(\widetilde{\mathcal{R}}_{h,\ell-1}^\top\bw_{h,\ell-1})_T=\bm{0}$ on $\Gamma_\ell^+$. If $\bv_{h,\ell}\in \bH_{0,\Gamma_\ell^-}({\rm curl},\Omega_\ell)$, $\mathcal{R}_{h,\ell}^\top\bv_{h,\ell}$ vanishes on $\left(\bigcup\limits_{j=1}^{\ell-1} \Omega_j\right)\setminus \Omega_\ell$. Therefore, by Definition \ref{def:error_prop} and Proposition~\ref{prop:b}, 
\begin{align*}
&a_{\Omega_\ell}(\mathcal{T}_{h,\ell,\ell-1}\bw_{h,\ell-1}, \bv_{h,\ell})\\ &\quad= b_{h,\ell}(\widetilde{\mathcal{R}}_{h,\ell-1}^\top\bw_{h,\ell-1}, \bv_{h,\ell})\\
 &\quad= -\tilde{a}_{\Omega\setminus\Omega_\ell}(\widetilde{\mathcal{R}}_{h,\ell-1}^\top\bw_{h,\ell-1}, \mathcal{R}_{h,\ell}^\top\bv_{h,\ell}) + ik\langle (\widetilde{\mathcal{R}}_{h,\ell-1}^\top\bw_{h,\ell-1})_T, (\mathcal{R}_{h,\ell}^\top\bv_{h,\ell})_T\rangle_{\partial\Omega\setminus\partial\Omega_\ell} \\ &\qquad- ik\langle(\widetilde{\mathcal{R}}_{h,\ell-1}^\top\bw_{h,\ell-1})_T, (\bv_{h,\ell})_T\rangle_{\Gamma_\ell} =0.
\end{align*}
Thus, $\cT_{h,\ell,\ell-1}\bw_{h,\ell-1}\in \bm{U}_{0,h,\ell}^-$.
\end{proof}

\subsection{The discrete impedance-to-impedance map}
The operator \(\boldsymbol{\mathcal T}_h\) introduced above describes the
propagation of local discrete Maxwell-harmonic errors. In this subsection, we show that
this propagation can be equivalently described in terms of discrete
impedance data
on the interfaces. We first define the discrete
impedance data operator and then introduce the corresponding discrete
impedance-to-impedance maps. 

We define the space of tangential functions on a compact manifold $\Gamma$ that is the restriction of $\bm{U}_h$ on $\Gamma$ as
\begin{equation*}
    \bm{U}_{h,T}(\Gamma):=\left\{\bu_{\Gamma}\in \mathbf{L}_T^2(\Gamma) \mid \exists\, \bu\in \bm{U}_h \text{ such that }\bu_{\Gamma}=\bu_T \text{ on }\Gamma\right\}.
\end{equation*}
Let 
\begin{equation*}
    \bu_\Gamma = \sum_m c_m(\bm{\phi}_{\Gamma,m})_T,  
\end{equation*}
where $\{\bm{\phi}_{\Gamma,m}\}$
are all the basis functions 
satisfying $(\bm{\phi}_{\Gamma,m})_T\neq 0$ on $\Gamma$.
We denote by \(\hat{\bu}_{\Gamma,\ell}\in \bm U_{h,\ell}\) the finite element
function whose tangential degrees of freedom on \(\Gamma\) coincide with those
of \(\bu_\Gamma\), and whose remaining degrees of freedom are zero:
\begin{equation*}
    \hat{\bu}_{\Gamma,\ell}:=\sum_m c_m(\bm{\phi}_{\Gamma,m})|_{\Omega_\ell}.
\end{equation*}
  \begin{definition}[Discrete impedance data operator]\label{def:discrete_imp}
          We define the discrete impedance data operator as $\iimp^h_{\Gamma_\ell}:\bm{U}_{h,\ell}\rightarrow \bm{U}_{h,T}(\Gamma_\ell)$. For $\bv_{h,\ell}\in \bm{U}_{h,\ell}$, the discrete impedance data $\imph{\Gamma_\ell}{\bv_{h, \ell}}$ satisfies
          \begin{equation}        
                  \langle\imph{\Gamma_\ell}{\bv_{h, \ell}}, \bw_{\Gamma_\ell} \rangle_{\Gamma_\ell} = a_{\Omega_\ell}(\bv_{h,\ell}, \hat{\bw}_{\Gamma_\ell,\ell}),\quad\quad \text{ for all } \bw_{\Gamma_\ell}\in \bm{U}_{h,T}(\Gamma_\ell).
                  \label{eqn:discrete-impedance-data}
          \end{equation}
  \end{definition}

\begin{proposition}\label{prop:discrete_impedance}
          For $\bv_{h,\ell}\in \bm{U}_{0,h,\ell}$, the discrete impedance data satisfies
          \begin{equation*}
                  \langle\imph{\Gamma_\ell}{\bv_{h,\ell}}, (\bw_{h,\ell})_T\rangle_{\Gamma_\ell} = a_{\Omega_\ell}(\bv_{h,\ell}, \bw_{h,\ell})\quad\text{for all } \bw_{h,\ell}\in \bm{U}_{h,\ell}.
          \end{equation*}
  \end{proposition}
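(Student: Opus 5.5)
The plan is to split an arbitrary test function $\bw_{h,\ell}\in\bm U_{h,\ell}$ into a part carrying only its tangential trace on $\Gamma_\ell$ and a part whose tangential trace on $\Gamma_\ell$ vanishes. The defining property of $\bm U_{0,h,\ell}$ kills the second part, and Definition~\ref{def:discrete_imp} handles the first.

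Concretely, given $\bw_{h,\ell}\in\bm U_{h,\ell}$, set $\bw_{\Gamma_\ell}:=(\bw_{h,\ell})_T|_{\Gamma_\ell}$. This lies in $\bm U_{h,T}(\Gamma_\ell)$, since it is the tangential trace of a N\'ed\'elec function; extend $\bw_{h,\ell}$ by its edge degrees of freedom to get a global function in $\bm U_h$. Expanding $\bw_{h,\ell}$ in the edge basis, write $\bw_{h,\ell}=\sum_m c_m\bm\phi_{\Gamma_\ell,m}|_{\Omega_\ell}+\bz_{h,\ell}$, where the first sum collects the basis functions whose tangential trace on $\Gamma_\ell$ is nonzero (these are the edges lying on $\Gamma_\ell$). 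By construction this first sum is exactly $\hat{\bw}_{\Gamma_\ell,\ell}$. The remainder $\bz_{h,\ell}$ involves only basis functions with zero tangential trace on $\Gamma_\ell$, so $\bz_{h,\ell}\in\bm U_{h,\ell}\cap\bH_{0,\Gamma_\ell}(\curl,\Omega_\ell)$. Moreover, $(\hat{\bw}_{\Gamma_\ell,\ell})_T=\bw_{\Gamma_\ell}$ on $\Gamma_\ell$. This identification uses unisolvence of lowest-order N\'ed\'elec edge DOFs: the tangential trace on a face is determined by the DOFs of the edges in that face.

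By sesquilinearity,
\[
a_{\Omega_\ell}(\bv_{h,\ell},\bw_{h,\ell})=a_{\Omega_\ell}(\bv_{h,\ell},\hat{\bw}_{\Gamma_\ell,\ell})+a_{\Omega_\ell}(\bv_{h,\ell},\bz_{h,\ell}).
\]
The second term vanishes because $\bv_{h,\ell}\in\bm U_{0,h,\ell}$. The first term equals $\langle\imph{\Gamma_\ell}{\bv_{h,\ell}},\bw_{\Gamma_\ell}\rangle_{\Gamma_\ell}$ by \eqref{eqn:discrete-impedance-data}, and $\bw_{\Gamma_\ell}=(\bw_{h,\ell})_T$ on $\Gamma_\ell$, which gives the claim.

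The only real point needing care is the decomposition step, specifically that the basis functions omitted from $\hat{\bw}_{\Gamma_\ell,\ell}$ have vanishing tangential trace on all of $\Gamma_\ell$. For lowest-order N\'ed\'elec elements, the tangential trace on a boundary face depends only on the DOFs of that face's edges, so an edge basis function not associated with an edge on $\Gamma_\ell$ has zero tangential trace there. I would state this explicitly, noting that interfaces resolved by the mesh are assumed, as is implicit in the definition of $\bm U_{h,T}(\Gamma)$.
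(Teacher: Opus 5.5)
Your proposal is correct and follows the paper's argument. Like the paper, you split $\bw_{h,\ell}=\hat{\bw}_{\Gamma_\ell,\ell}+(\bw_{h,\ell}-\hat{\bw}_{\Gamma_\ell,\ell})$, remove the second piece using the definition of $\bm U_{0,h,\ell}$, and identify the first via Definition~\ref{def:discrete_imp}. Your additional justification that the remainder lies in $\bH_{0,\Gamma_\ell}(\curl,\Omega_\ell)$ (lowest-order edge basis functions not associated with edges on $\Gamma_\ell$ have vanishing tangential trace there) fills in a step the paper simply asserts.
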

  \begin{proof}
For any $\bw_{h,\ell}\in \bm{U}_{h,\ell}$,  let $\bw_{\Gamma_\ell}\in\bU_{h,T}(\Gamma_\ell)$ be defined as $(\bw_{h,\ell})_T$ on $\Gamma_\ell$. 
$\bw_{h,\ell}-\hat{\bw}_{\Gamma_\ell,\ell}\in \bH_{0,\Gamma_\ell}({\rm curl}, \Omega_\ell)$. Since $\bv_{h,\ell}\in \bm{U}_{0,h,\ell}$, $a_{\Omega_\ell}(\bv_{h,\ell}, \bw_{h,\ell})=a_{\Omega_\ell}(\bv_{h,\ell},\hat{\bw}_{\Gamma_\ell,\ell}) + a_{\Omega_\ell}(\bv_{h,\ell}, \bw_{h,\ell}-\hat{\bw}_{\Gamma_\ell,\ell})=a_{\Omega_\ell}(\bv_{h,\ell},\hat{\bw}_{\Gamma_\ell,\ell}).$ By Definition~\ref{def:discrete_imp}, $a_{\Omega_\ell}(\bv_{h,\ell}, \bw_{h,\ell})=a_{\Omega_\ell}(\bv_{h,\ell},\hat{\bw}_{\Gamma_\ell,\ell})
  =\langle\imph{\Gamma_\ell}{\bv_{h,\ell}}, (\bw_{h,\ell})_T\rangle_{\Gamma_\ell}$.
  \end{proof}
   The last proposition establishes that for discrete Maxwell-harmonic functions, the action of the local bilinear form is fully characterized by the impedance data on the interface. 
   Based on this observation, we are now ready to formally define the discrete Maxwell-harmonic norm as follows.
\begin{definition}[Discrete Maxwell-harmonic norm]\label{def:discrete_norm}
     We define the norms on $\bm{U}_{0,h,\ell}$ and $\mathbb{U}_{0,h}$ by
     \begin{equation*}
        \|\bv_{h,\ell}\|_{\bm{U}_{0,h,\ell}}:=\|\iimp_{\Gamma_\ell}^h \bv_{h,\ell}\|_{0,\Gamma_\ell} \quad \text{and} \quad \|\mathfrak{v}_{h}\|_{\mathbb{U}_{0,h}}:=\left(\sum_{\ell=1}^N \|\bv_{h,\ell}\|_{\bm{U}_{0,h,\ell}}^2\right)^{1/2}.
     \end{equation*}
where $\bv_{h,\ell}\in \bm{U}_{0,h,\ell}$ and $\mathfrak{v}_h:=(\bv_{h,1},\dots,\bv_{h,N})^\top \in \mathbb{U}_{0,h}$.
        \end{definition}

We next record how this data is
supported on the two sides of the interface.
 \begin{lemma}\label{lem:imp_zero}
 For $\bv_{h,\ell}\in \bm{U}_{0,h,\ell}^+$, 
 \begin{equation*}
     \imph{\Gamma_\ell}{\bv_{h,\ell}}=\bm{0}\quad\text{on }\Gamma_\ell^-.
 \end{equation*}
 For $\bv_{h,\ell}\in \bm{U}_{0,h,\ell}^-$, 
 \begin{equation*}
     \imph{\Gamma_\ell}{\bv_{h,\ell}}=\bm{0}\quad\text{on }\Gamma_\ell^+.
 \end{equation*}
 \end{lemma}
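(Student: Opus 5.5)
The plan is to test the defining identity \eqref{eqn:discrete-impedance-data} against every discrete tangential trace supported on one side of the interface, and read off that the discrete impedance data vanishes there. We treat $\bv_{h,\ell}\in \bm{U}_{0,h,\ell}^+$; the other case follows by exchanging $+$ and $-$. Since $\imph{\Gamma_\ell}{\bv_{h,\ell}}\in \bm{U}_{h,T}(\Gamma_\ell)$, it suffices to show that
\[
\langle \imph{\Gamma_\ell}{\bv_{h,\ell}}, \bw_{\Gamma_\ell}\rangle_{\Gamma_\ell}=0
\]
for every $\bw_{\Gamma_\ell}\in \bm{U}_{h,T}(\Gamma_\ell)$ whose degrees of freedom are supported on $\Gamma_\ell^-$. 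For a strip decomposition, $\Gamma_\ell^-$ and $\Gamma_\ell^+$ are separated; we assume this holds at the discrete level, i.e.\ no mesh edge lies on both. Under this assumption the tangential degrees of freedom on $\Gamma_\ell$ split as a direct sum of those on $\Gamma_\ell^-$ and those on $\Gamma_\ell^+$. Consequently $\bm{U}_{h,T}(\Gamma_\ell)=\bm{U}_{h,T}(\Gamma_\ell^-)\oplus \bm{U}_{h,T}(\Gamma_\ell^+)$, with the two summands supported on the respective pieces.

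The key steps are as follows. Take $\bw_{\Gamma_\ell}$ with degrees of freedom only on $\Gamma_\ell^-$, and form the lifting $\hat{\bw}_{\Gamma_\ell,\ell}$. By construction every degree of freedom of this lifting off $\Gamma_\ell^-$ is zero; in particular its tangential trace vanishes on $\Gamma_\ell^+$. Hence $\hat{\bw}_{\Gamma_\ell,\ell}\in \bm{U}_{h,\ell}\cap \bH_{0,\Gamma_\ell^+}({\rm curl},\Omega_\ell)$. The definition \eqref{eqn:U0h} of $\bm{U}^+_{0,h,\ell}$ then gives $a_{\Omega_\ell}(\bv_{h,\ell},\hat{\bw}_{\Gamma_\ell,\ell})=0$, and \eqref{eqn:discrete-impedance-data} converts this into
\[
\langle \imph{\Gamma_\ell}{\bv_{h,\ell}},\bw_{\Gamma_\ell}\rangle_{\Gamma_\ell}=0 .
\]
Because $\bw_{\Gamma_\ell}$ is supported on $\Gamma_\ell^-$, this is the same as $\langle \imph{\Gamma_\ell}{\bv_{h,\ell}},\bw_{\Gamma_\ell}\rangle_{\Gamma_\ell^-}=0$. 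Finally, $\imph{\Gamma_\ell}{\bv_{h,\ell}}$ restricted to $\Gamma_\ell^-$ itself belongs to $\bm{U}_{h,T}(\Gamma_\ell^-)$, by the direct-sum splitting. Choosing $\bw_{\Gamma_\ell}$ equal to this restriction gives $\|\imph{\Gamma_\ell}{\bv_{h,\ell}}\|_{0,\Gamma_\ell^-}=0$.

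The main obstacle is justifying the splitting, i.e.\ that restricting a discrete trace to $\Gamma_\ell^-$ stays in the discrete trace space and is the trace of a lifting vanishing on $\Gamma_\ell^+$. Two facts are needed here: the lowest-order N\'ed\'elec tangential trace on a face is determined by the edge degrees of freedom of that face, and no face or edge of $\Gamma_\ell$ touches both components. This is exactly where the strip geometry of Definition \ref{def:strip} enters, namely that $\Gamma_\ell^\pm$ are disjoint interfaces meeting $\partial\Omega$ only. I would state it as a mesh-compatibility remark: the mesh resolves $\Gamma_\ell^\pm$ and has no edge joining them. The $\bv_{h,\ell}\in\bm{U}_{0,h,\ell}^-$ case is then identical with the roles of $\pm$ swapped.
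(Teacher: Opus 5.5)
Your proposal is correct and follows the paper's proof. Both arguments lift a trace supported on $\Gamma_\ell^-$ to a function in $\bm U_{h,\ell}\cap\bH_{0,\Gamma_\ell^+}(\curl,\Omega_\ell)$, use the definition of $\bm U^+_{0,h,\ell}$ to kill $a_{\Omega_\ell}(\bv_{h,\ell},\hat{\bw})$, and read off from \eqref{eqn:discrete-impedance-data} that $\imph{\Gamma_\ell}{\bv_{h,\ell}}$ vanishes on $\Gamma_\ell^-$. You also state explicitly two facts the paper's short proof uses silently: that no mesh edge lies on both $\Gamma_\ell^-$ and $\Gamma_\ell^+$, and the resulting splitting $\bm U_{h,T}(\Gamma_\ell)=\bm U_{h,T}(\Gamma_\ell^-)\oplus\bm U_{h,T}(\Gamma_\ell^+)$, which is what makes the restriction a valid test function.
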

 \begin{proof}
 We prove the first equation; the proof of the second is similar.
 Since $\bv_{h,\ell}\in \bm{U}_{0,h,\ell}^+$, $a_{\Omega_\ell}(\bv_{h,\ell},\hat{\bw}_{\Gamma_\ell^-,\ell})=0$ for any $\bw_{\Gamma_\ell^-}\in \bm{U}_{h,T}(\Gamma_\ell^-)$. By definition of the discrete impedance data (Definition~\ref{def:discrete_imp}), $\langle \imph{\Gamma_\ell}{\bv_{h,\ell}},\bw_{\Gamma_\ell^-}\rangle_{\Gamma_\ell^-}=0$. We thus have $\imph{\Gamma_\ell}{\bv_{h,\ell}}=\bm{0}$ on $\Gamma_\ell^-$.
 \end{proof}
 Let $s\in\{+,-\}$. For $\bv_{h,\ell}\in \bm{U}_{0,h,\ell}^s$, we define
 \begin{equation}\label{eq:imp+-}
     \imph{\Gamma_\ell^s}{\bv_{h,\ell}}:=\imph{\Gamma_\ell}{\bv_{h,\ell}}|_{\Gamma_\ell^s}.
 \end{equation}
This one-sided support property allows us to decompose a discrete
Maxwell-harmonic function into two components, corresponding to the impedance
data on \(\Gamma_\ell^-\) and \(\Gamma_\ell^+\), respectively.
\begin{lemma}
    [Decomposition of $\bm{U}_{0,h,\ell}$]\label{lem:de_space}
    For each \(\ell=1,\cdots,N\), 
\[
    \bm{U}_{0,h,\ell}
    =
    \bm{U}_{0,h,\ell}^{+}
    \oplus
    \bm{U}_{0,h,\ell}^{-}.
\]
Moreover, for $\bv_{h,\ell}
    =
    \bv_{h,\ell}^{+}
    +
    \bv_{h,\ell}^{-},
    ~
    \bv_{h,\ell}^{\pm}\in \bm U_{0,h,\ell}^{\pm},$ 
    \begin{equation}\label{eqn:norm_decomp}
    \|\bv_{h,\ell}\|_{\bm{U}_{0,h,\ell}}^2= \|\bv^+_{h,\ell}\|_{\bm{U}^+_{0,h,\ell}}^2+ \|\bv^-_{h,\ell}\|_{\bm{U}^-_{0,h,\ell}}^2,
    \end{equation}
where
$\|\cdot\|_{\bm U_{0,h,\ell}^{\pm}}
    :=
    \|\imph{\Gamma_\ell}{\cdot}\|_{0,\Gamma_\ell^\pm}.$
\end{lemma}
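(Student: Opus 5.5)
The plan is to construct the decomposition explicitly from the interface impedance data, using Assumption~\ref{assump:local-well-posedness} to solve local discrete problems. The norm identity will then follow from Lemma~\ref{lem:imp_zero}. Throughout, $\Gamma_\ell^+$ and $\Gamma_\ell^-$ are disjoint pieces of $\Gamma_\ell$. For a strip decomposition they are separated, so the edge degrees of freedom on $\Gamma_\ell^+$ and on $\Gamma_\ell^-$ form disjoint sets. Consequently $\bm U_{h,T}(\Gamma_\ell)=\bm U_{h,T}(\Gamma_\ell^+)\oplus\bm U_{h,T}(\Gamma_\ell^-)$, and every $\bw_{h,\ell}\in\bm U_{h,\ell}$ splits as $\hat\bw_{\Gamma_\ell^+,\ell}+\hat\bw_{\Gamma_\ell^-,\ell}+\bw^0$ with $\bw^0\in\bH_{0,\Gamma_\ell}(\curl,\Omega_\ell)$.

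\textbf{Step 1 (trivial intersection).} Take $\bv\in\bm U_{0,h,\ell}^+\cap\bm U_{0,h,\ell}^-$. By Lemma~\ref{lem:imp_zero}, $\imph{\Gamma_\ell}{\bv}$ vanishes on both $\Gamma_\ell^-$ and $\Gamma_\ell^+$, hence on all of $\Gamma_\ell$. Since $\bv\in\bm U_{0,h,\ell}$, Proposition~\ref{prop:discrete_impedance} then gives $a_{\Omega_\ell}(\bv,\bw_{h,\ell})=0$ for every $\bw_{h,\ell}\in\bm U_{h,\ell}$. Invertibility of $A_{h,\ell}$ forces $\bv=\bm 0$.

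\textbf{Step 2 (the sum is everything).} Given $\bv\in\bm U_{0,h,\ell}$, set $\bg^\pm:=\imph{\Gamma_\ell}{\bv}|_{\Gamma_\ell^\pm}$. Define $\bv^\pm\in\bm U_{h,\ell}$ as the unique solution (again by Assumption~\ref{assump:local-well-posedness}) of
\[
a_{\Omega_\ell}(\bv^\pm,\bw_{h,\ell})=\langle \bg^\pm,(\bw_{h,\ell})_T\rangle_{\Gamma_\ell^\pm}\quad\text{for all }\bw_{h,\ell}\in\bm U_{h,\ell}.
\]
The right-hand side vanishes whenever $(\bw_{h,\ell})_T=\bm 0$ on $\Gamma_\ell^\mp$. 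In particular it vanishes for $\bw_{h,\ell}\in\bH_{0,\Gamma_\ell}(\curl,\Omega_\ell)$, so $\bv^\pm\in\bm U_{0,h,\ell}$. The same observation shows that $\bv^+$ annihilates $\bm U_{h,\ell}\cap\bH_{0,\Gamma_\ell^-}(\curl,\Omega_\ell)$, so $\bv^+\in\bm U_{0,h,\ell}^+$, and likewise $\bv^-\in\bm U_{0,h,\ell}^-$ (following the convention of \eqref{eqn:U0h} and Lemma~\ref{lem:imp_zero}). By Proposition~\ref{prop:discrete_impedance} applied to $\bv$, together with the splitting $\Gamma_\ell=\Gamma_\ell^+\cup\Gamma_\ell^-$,
\[
a_{\Omega_\ell}(\bv-\bv^+-\bv^-,\bw_{h,\ell})=0\quad\text{for all }\bw_{h,\ell}\in\bm U_{h,\ell},
\]
so $\bv=\bv^++\bv^-$ by invertibility. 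The direct sum is therefore established.

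\textbf{Step 3 (norm identity).} Write $\bv=\bv^++\bv^-$. The map $\iimp^h_{\Gamma_\ell}$ is linear, since it is defined by the linear relation \eqref{eqn:discrete-impedance-data}. Hence $\imph{\Gamma_\ell}{\bv}=\imph{\Gamma_\ell}{\bv^+}+\imph{\Gamma_\ell}{\bv^-}$. By Lemma~\ref{lem:imp_zero}, the first summand is supported on $\Gamma_\ell^+$ and the second on $\Gamma_\ell^-$. Because these sets are disjoint, the squared $L^2(\Gamma_\ell)$ norm splits, and the right-hand side is exactly $\|\bv^+\|^2_{\bm U^+_{0,h,\ell}}+\|\bv^-\|^2_{\bm U^-_{0,h,\ell}}$, which is \eqref{eqn:norm_decomp}.

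\textbf{Main obstacle.} The delicate point is the disjointness of the interface degrees of freedom on $\Gamma_\ell^+$ and $\Gamma_\ell^-$, together with the sign convention for which side carries the data in \eqref{eqn:U0h}. If the two closures met, for instance at edges lying on $\partial\Omega$, the splitting of $\bm U_{h,T}(\Gamma_\ell)$ could fail. I would handle this by invoking the strip geometry, under which $\Gamma_\ell^\pm$ are separated by a positive distance once the mesh resolves the overlap. The convention will be matched to the one that makes Lemma~\ref{lem:imp_zero} true.
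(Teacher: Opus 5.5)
Your argument is essentially the paper's proof: you use the same one-sided-data construction of $\bv^\pm$ via invertibility of $A_{h,\ell}$, and the same combination of Lemma~\ref{lem:imp_zero}, Proposition~\ref{prop:discrete_impedance} and local well-posedness for the trivial intersection; you also verify $\bv^\pm\in\bm U_{0,h,\ell}^\pm$ and the norm split more explicitly than the paper does. There is one sign slip to fix in Step 2. The quantity $\langle \mathbf{g}^+,(\bw_{h,\ell})_T\rangle_{\Gamma_\ell^+}$ vanishes when $(\bw_{h,\ell})_T=\mathbf{0}$ on $\Gamma_\ell^+$, not on $\Gamma_\ell^-$, so $\bv^+$ annihilates $\bm U_{h,\ell}\cap\bH_{0,\Gamma_\ell^+}(\curl,\Omega_\ell)$. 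That is exactly membership in $\bm U_{0,h,\ell}^+$ under \eqref{eqn:U0h} as written, and it is already consistent with Lemma~\ref{lem:imp_zero}, so no convention-matching is needed.
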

\begin{proof}
    For $\bw_{h,\ell}\in \bm{U}_{0,h,\ell}$, let
\(\bw_{h,\ell}^{\pm}\in\bm U_{0,h,\ell}^{\pm}\) be the discrete
Maxwell-harmonic functions determined by the one-sided impedance data
\(\imph{\Gamma_\ell^\pm}{\bw_{h,\ell}}\): for all $\bv_{h,\ell}\in\bm{U}_{h,\ell}$,
\begin{equation*}
\begin{aligned}
    a_{\Omega_\ell}(\bw_{h,\ell}^+,\bv_{h,\ell})&=\langle \imph{\Gamma_\ell^+}{\bw_{h,\ell}},(\bv_{h,\ell})_T\rangle_{\Gamma_\ell^+},\\
    a_{\Omega_\ell}(\bw_{h,\ell}^-,\bv_{h,\ell})&=\langle \imph{\Gamma_\ell^-}{\bw_{h,\ell}},(\bv_{h,\ell})_T\rangle_{\Gamma_\ell^-}.
\end{aligned}
\end{equation*}
    By Lemma \ref{lem:imp_zero} and Proposition \ref{prop:discrete_impedance}, 
    \begin{equation*}
        \begin{aligned}
            a_{\Omega_\ell}(\bw_{h,\ell}^++\bw_{h,\ell}^-,\bv_{h,\ell})&=\langle \imph{\Gamma_\ell^+}{\bw_{h,\ell}},(\bv_{h,\ell})_T\rangle_{\Gamma_\ell^+}+\langle \imph{\Gamma_\ell^-}{\bw_{h,\ell}},(\bv_{h,\ell})_T\rangle_{\Gamma_\ell^-}\\
            &=\langle \imph{\Gamma_\ell}{\bw_{h,\ell}},(\bv_{h,\ell})_T\rangle_{\Gamma_\ell}\\
            &=a_{\Omega_\ell}(\bw_{h,\ell},\bv_{h,\ell}).
        \end{aligned}
    \end{equation*}
    Thus, $\bw_{h,\ell}=\bw_{h,\ell}^++\bw_{h,\ell}^-$. 
    If \(\bw_{h,\ell}\in \bm U_{0,h,\ell}^{+}\cap \bm U_{0,h,\ell}^{-}\), then
Lemma~\ref{lem:imp_zero} gives
\[
    \imph{\Gamma_\ell^-}{\bw_{h,\ell}}=\bm{0},
    \quad
    \imph{\Gamma_\ell^+}{\bw_{h,\ell}}=\bm{0}.
\]
Hence \(\imph{\Gamma_\ell}{\bw_{h,\ell}}=\bm{0}\). By
Proposition~\ref{prop:discrete_impedance},
\[
    a_{\Omega_\ell}(\bw_{h,\ell},\bv_{h,\ell})=0,
    \quad \quad \text{ for all } \bv_{h,\ell}\in\bm U_{h,\ell}.
\]
By local well-posedness, \(\bw_{h,\ell}=\bm{0}\).
    Therefore, $\bm{U}_{0,h,\ell}=\bm{U}_{0,h,\ell}^+\oplus \bm{U}_{0,h,\ell}^-$. \eqref{eqn:norm_decomp} follows directly from the definition \ref{def:discrete_norm}.
\end{proof}
Having identified discrete Maxwell-harmonic functions with their impedance
data on the interfaces, we now define how such data is transferred from one
interface to its neighboring interfaces through a local Maxwell solve.

\begin{definition}[Discrete impedance-to-impedance map]
    \label{def:discrete_iip}
Let \(s\in\{+,-\}\).
Given \(\mathbf g_T\in \bm U_{h,T}(\Gamma_\ell^s)\), let 
 $\bw_{h,\ell}\in \bm{U}_{h,\ell}$ be the unique solution of  
\begin{equation}\label{eq:aOmegaj}
a_{\Omega_\ell}(\bw_{h,\ell},\bv_{h,\ell}) = \langle\mathbf{g}_T, (\bv_{h,\ell})_T\rangle_{\Gamma_{\ell}^s}\quad\text{for all }\bv_{h,\ell}\in \bm{U}_{h,\ell}.
\end{equation}
Let $\Omega_{j,\ell}:=\Omega_j\cap\Omega_\ell$, 
then $\cI_{\Gamma_\ell^s\to\Gamma_{\ell-1}^+}^h: \bm{U}_{h,T}(\Gamma_\ell^s)\to \bm{U}_{h,T}(\Gamma_{\ell-1}^+)$ and $\cI_{\Gamma_\ell^s\to\Gamma_{\ell+1}^-}^h: \bm{U}_{h,T}(\Gamma_\ell^s)\to \bm{U}_{h,T}(\Gamma_{\ell+1}^-)$ are defined as the solutions of 
\begin{equation}\label{eq:dis_im-im map}
\begin{aligned}
\langle \cI_{\Gamma_\ell^s\to\Gamma_{\ell-1}^+}^h \bg_T,\bv_{h,\Gamma_{\ell-1}^+}\rangle_{\Gamma_{\ell-1}^+} = a_{\Omega_{\ell,\ell-1}}(\bw_{h,\ell},\hat{\bv}^+_{h,\ell-1})-a_{\Omega_\ell}(\bw_{h,\ell},\mathcal{R}^\top_{h,\ell-1}\hat{\bv}_{h,\ell-1}^+),\\ 
\langle \cI_{\Gamma_\ell^s\to\Gamma_{\ell+1}^-}^h \bg_T,\bv_{h,\Gamma_{\ell+1}^-}\rangle_{\Gamma_{\ell+1}^-} = a_{\Omega_{\ell,\ell+1}}(\bw_{h,\ell},\hat{\bv}^-_{h,\ell+1})-a_{\Omega_\ell}(\bw_{h,\ell},\mathcal{R}^\top_{h,\ell+1}\hat{\bv}_{h,\ell+1}^-),
\end{aligned}
\end{equation}
 for all $\bv_{h,\Gamma_{\ell-1}^+}\in \bm{U}_{h,T}(\Gamma_{\ell-1}^+), 
\bv_{h,\Gamma_{\ell+1}^-}\in \bm{U}_{h,T}(\Gamma_{\ell+1}^-)$, respectively.
\end{definition}
\begin{theorem}\label{thm:dis-iip-aa}
    Let $s\in \{+,-\}$, $\bg_T\in \bm{U}_{h,T}(\Gamma_\ell^s)$ and suppose $\bw_{h,\ell}$ satisfies \eqref{eq:aOmegaj}. Then 
\begin{equation}\label{eq:lem2}
\begin{aligned}
\langle \cI_{\Gamma_\ell^s\to\Gamma_{\ell-1}^+}^h \bg_T,(\bv_{h,\ell-1})_T\rangle_{\Gamma_{\ell-1}^+}=b_{h,\ell-1}(\widetilde{\mathcal{R}}^\top_{h,\ell}\bw_{h,\ell},\bv_{h,\ell-1}), ~~\quad \text{ for all } \bv_{h,\ell-1}\in \bm{U}_{h,{\ell-1}}.\\
\langle \cI_{\Gamma_\ell^s\to\Gamma_{\ell+1}^-}^h \bg_T,(\bv_{h,\ell+1})_T\rangle_{\Gamma_{\ell+1}^-}=b_{h,\ell+1}(\widetilde{\mathcal{R}}^\top_{h,\ell}\bw_{h,\ell},\bv_{h,\ell+1}), ~~\quad \text{ for all } \bv_{h,\ell+1}\in \bm{U}_{h,{\ell+1}}.
\end{aligned}
\end{equation}
\end{theorem}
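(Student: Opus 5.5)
We prove the first identity in \eqref{eq:lem2}; the second follows by the mirror-image argument with $\ell-1$ replaced by $\ell+1$ and $+$ by $-$. Fix $\bv_{h,\ell-1}\in\bm U_{h,\ell-1}$. The plan is to split it into a part carrying its tangential trace on $\Gamma_{\ell-1}^+$ and a part vanishing there. We then show that both sides of \eqref{eq:lem2} vanish on the second part and agree on the first, by the defining relation \eqref{eq:dis_im-im map}.

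\textbf{Step 1 (splitting).} Let $\bv_{\Gamma}:=(\bv_{h,\ell-1})_T|_{\Gamma_{\ell-1}^+}\in\bm U_{h,T}(\Gamma_{\ell-1}^+)$, and let $\hat\bv^+_{h,\ell-1}$ be the associated finite element function (its degrees of freedom are those of $\bv_\Gamma$, all others zero). Set $\bz:=\bv_{h,\ell-1}-\hat\bv^+_{h,\ell-1}$, so that $\bz_T=\bm 0$ on $\Gamma_{\ell-1}^+$. The left-hand side of \eqref{eq:lem2} depends only on $\bv_\Gamma$, and by \eqref{eq:dis_im-im map} it equals
\[
a_{\Omega_{\ell,\ell-1}}(\bw_{h,\ell},\hat\bv^+_{h,\ell-1})-a_{\Omega_\ell}(\bw_{h,\ell},\mathcal R^\top_{h,\ell-1}\hat\bv^+_{h,\ell-1}).
\]
Here $\mathcal R^\top_{h,\ell-1}\hat\bv^+_{h,\ell-1}$ is supported in the layer of elements of $\Omega_{\ell-1}$ adjacent to $\Gamma_{\ell-1}^+$, which lies inside $\Omega_\ell$. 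Consequently the boundary term of $a_{\Omega_\ell}$ on $\Gamma_\ell$ is relevant only through $\Gamma_\ell^-$, where this function need not have zero trace.

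\textbf{Step 2 (the part vanishing on $\Gamma_{\ell-1}^+$).} We need $b_{h,\ell-1}(\widetilde{\mathcal R}^\top_{h,\ell}\bw_{h,\ell},\bz)=0$. Because the decomposition is a strip, $\Gamma_{\ell-1}=\Gamma_{\ell-1}^-\cup\Gamma_{\ell-1}^+$, and $\widetilde{\mathcal R}^\top_{h,\ell}\bw_{h,\ell}$ vanishes on $\Omega_{\ell-2}$, so $\Gamma_{\ell-1}^-$ does not see it. We use the expression \eqref{eqn:prop-b} of Proposition \ref{prop:b}. The term $\widetilde a_{\Omega\setminus\Omega_{\ell-1}}$ involves $\mathcal R^\top_{h,\ell-1}\bz$, which is zero outside $\Omega_{\ell-1}$ when $\bz_T=\bm 0$ on $\Gamma_{\ell-1}^+$ (the trace on $\Gamma_{\ell-1}^-$ is irrelevant because the first argument vanishes there). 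The two boundary terms vanish for the same support reasons, by exactly the argument in the proof of Proposition \ref{prop:error_range}.

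\textbf{Step 3 (the $\hat\bv^+$ part).} It remains to show
\[
b_{h,\ell-1}(\widetilde{\mathcal R}^\top_{h,\ell}\bw_{h,\ell},\hat\bv^+_{h,\ell-1})=a_{\Omega_{\ell,\ell-1}}(\bw_{h,\ell},\hat\bv^+_{h,\ell-1})-a_{\Omega_\ell}(\bw_{h,\ell},\mathcal R^\top_{h,\ell-1}\hat\bv^+_{h,\ell-1}).
\]
By \eqref{eqn:def-b}, the left-hand side is $a_{\Omega_{\ell-1}}(\widetilde{\mathcal R}^\top_{h,\ell}\bw_{h,\ell},\hat\bv^+)-a(\widetilde{\mathcal R}^\top_{h,\ell}\bw_{h,\ell},\mathcal R^\top_{h,\ell-1}\hat\bv^+)$. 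The test function $\hat\bv^+$ is supported in elements touching $\Gamma_{\ell-1}^+$. On those elements the weights satisfy $\chi_{h,\ell,e}=1$ by \eqref{eq:chi-strip}, assuming the discrete partition of unity equals one on edges near $\Gamma_{\ell-1}^+$, i.e.\ the overlap is at least one element layer. Hence $\widetilde{\mathcal R}^\top_{h,\ell}\bw_{h,\ell}=\bw_{h,\ell}$ there. The first term is then $a_{\Omega_{\ell,\ell-1}}(\bw_{h,\ell},\hat\bv^+)$, since $\widetilde{\mathcal R}^\top_{h,\ell}\bw_{h,\ell}$ vanishes on $\Omega_{\ell-1}\setminus\Omega_\ell$. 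The impedance boundary integrals over $\partial\Omega\cap\partial\Omega_{\ell-1}$ coincide with those over $\partial\Omega_{\ell,\ell-1}\cap\partial\Omega$. In the second term the volume parts agree with those of $a_{\Omega_\ell}$, but $a$ carries no boundary integral on $\Gamma_\ell^-$ whereas $a_{\Omega_\ell}$ does. We must therefore check that $\langle\bw_T,(\mathcal R^\top_{h,\ell-1}\hat\bv^+)_T\rangle_{\Gamma_\ell^-}$ either vanishes or is matched by the corresponding term of $a_{\Omega_{\ell,\ell-1}}$, whose boundary contains $\Gamma_\ell^-\cap\overline{\Omega_{\ell-1}}$. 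This boundary-term bookkeeping, together with justifying $\chi_{h,\ell,e}=1$ on the support of $\hat\bv^+$, is the main obstacle, and we expect it to rest on the precise convention used for $a_{\Omega_{j,\ell}}$.

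Finally, linearity in the second argument combines Steps 2 and 3 to give \eqref{eq:lem2}.
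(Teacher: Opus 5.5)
Your route is the paper's. The paper also splits $\bv_{h,\ell-1}$ into $\hat{\bv}^+_{h,\ell-1}$ plus a part with zero trace on $\Gamma_{\ell-1}^+$, and kills the latter on both sides with Proposition~\ref{prop:b}. Your use of \eqref{eqn:prop-b} plus supports there is more careful than the paper's citation, since the second half of that proposition needs zero trace on all of $\Gamma_{\ell-1}$. The paper then simply asserts that the defining expression of $\cI^h_{\Gamma_\ell^s\to\Gamma_{\ell-1}^+}$, tested with $\hat{\bv}^+_{h,\ell-1}$, equals $b_{h,\ell-1}(\widetilde{\mathcal R}^\top_{h,\ell}\bw_{h,\ell},\hat{\bv}^+_{h,\ell-1})$.

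The step you leave open is exactly that equality. It does not hinge on a special convention for $a_{\Omega_{\ell,\ell-1}}$: the paper's $a_D$, with the impedance term on all of $\partial D$, is what is needed. The step is settled by supports once a slip in Step~1 is corrected. Put $V:=\mathcal R^\top_{h,\ell-1}\hat{\bv}^+_{h,\ell-1}$ and $W:=\widetilde{\mathcal R}^\top_{h,\ell}\bw_{h,\ell}$. $V$ is defined by edge degrees of freedom, which are nonzero only on edges of $\overline{\Gamma_{\ell-1}^+}$. So $V$ is supported on the elements on \emph{both} sides of $\Gamma_{\ell-1}^+$, including a layer in $\Omega_\ell\setminus\Omega_{\ell-1}$; otherwise it would not be curl-conforming. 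Also $V=\hat{\bv}^+_{h,\ell-1}$ on $\Omega_{\ell-1}$.

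Suppose the overlap is wider than one element layer and the outer layer stays out of $\Omega_{\ell+1}$. Then $V$ and $\hat{\bv}^+_{h,\ell-1}$ have zero tangential trace on $\Gamma_\ell^\pm$, so the $\Gamma_\ell^-$ terms you worry about vanish on both sides. Now subtract before comparing. In $a_{\Omega_{\ell,\ell-1}}(\bw_{h,\ell},\hat{\bv}^+_{h,\ell-1})-a_{\Omega_\ell}(\bw_{h,\ell},V)$, the volume integrals over $\Omega_{\ell,\ell-1}$ cancel because $V=\hat{\bv}^+_{h,\ell-1}$ there. The impedance integrals over $\partial\Omega\cap\partial\Omega_{\ell-1}$ cancel for the same reason. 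This leaves
\[
-\widetilde a_{\Omega_\ell\setminus\Omega_{\ell-1}}(\bw_{h,\ell},V)
-\ri k\langle\lambda(\bw_{h,\ell})_T,(\hat{\bv}^+_{h,\ell-1})_T\rangle_{\Gamma_{\ell-1}^+}
+\ri k\langle\lambda(\bw_{h,\ell})_T,V_T\rangle_{\partial\Omega\setminus\partial\Omega_{\ell-1}}.
\]
By \eqref{eqn:prop-b}, this is $b_{h,\ell-1}(W,\hat{\bv}^+_{h,\ell-1})$ provided $W=\bw_{h,\ell}$ on the outer layer and $W_T=(\bw_{h,\ell})_T$ on $\Gamma_{\ell-1}^+$. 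That is, you need $\chi_{h,\ell,e}=1$ on the edges of the outer layer. This holds by \eqref{eq:chi-strip} on $\Gamma_{\ell-1}^+$, and elsewhere because $\chi_\ell$ is the only partition-of-unity function not vanishing on $\Omega_\ell\setminus(\overline{\Omega_{\ell-1}}\cup\overline{\Omega_{\ell+1}})$.

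Your assumption $\chi_{h,\ell,e}=1$ on the inner layer (inside $\Omega_{\ell-1}$) is not needed. It can also fail for a partition of unity that only reaches $1$ at $\Gamma_{\ell-1}^+$. It enters your argument only because you match the two terms of $b_{h,\ell-1}$ separately; in the difference, the inner-layer integrals cancel identically.
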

\begin{proof}
   We only provide a sketch of the proof here, as it closely follows the proof in \cite[Lemma 4.8]{gong2023convergence}. 
      Write \( \bv_{h,\ell-1} = \hat{\bv}_{h,\ell-1}^+ + \widetilde{\bv}_{h,\ell-1}^+ \), where \( (\hat{\bv}_{h,\ell-1}^+)_T \) coincides with \( (\bv_{h,\ell-1})_T \) on \( \Gamma_{\ell-1}^+ \) and is zero on other edges and \( (\widetilde{\bv}_{h,\ell-1}^+)_T \) vanishes on \( \Gamma_{\ell-1}^+ \). Thus, by Proposition \ref{prop:b}, 
\[\langle \cI^h_{\Gamma_{\ell}^s \to \Gamma_{\ell-1}^+} \bg_T, (\widetilde{\bv}_{h,\ell-1}^+)_T \rangle_{\Gamma_{\ell-1}^+} = 0 = b_{h,\ell-1} (\widetilde{\mathcal{R}}_{h,\ell}^\top \bw_{h,\ell}, \widetilde{\bv}_{h,\ell-1}^+).\]
From Definition \ref{def:discrete_iip}, Proposition \ref{prop:b} and the definition of $b_{h,\ell-1}$ in \eqref{eqn:def-b}, we can get the following result:
\begin{equation*}
\begin{aligned}
        \langle \cI^h_{\Gamma_{\ell}^s \to \Gamma_{\ell-1}^+} \bg_T, (\hat{\bv}_{h,\ell-1}^+)_T \rangle_{\Gamma_{\ell-1}^+} &= a_{\Omega_{\ell,\ell-1}}(\bw_{h,\ell},\hat{\bv}^+_{h,\ell-1})-a_{\Omega_\ell}(\bw_{h,\ell},\mathcal{R}^\top_{h,\ell-1}\hat{\bv}_{h,\ell-1}^+)\\&=b_{h,\ell-1} (\widetilde{\mathcal{R}}_{h,\ell}^\top \bw_{h,\ell}, \hat{\bv}_{h,\ell-1}^+).
\end{aligned}
\end{equation*}
\end{proof}

The next result connects the volume-based error transfer operator
\(\mathcal T_{h,j,\ell}\) with the interface-based impedance-to-impedance map.
It shows that applying \(\mathcal T_{h,j,\ell}\) to a local Maxwell-harmonic
error and then taking its impedance data is equivalent to applying the
discrete impedance-to-impedance map to the original impedance data.
\begin{theorem}\label{thm:IT}
Let $\bw_{h,\ell}\in \bm{U}_{0,h,\ell}^s$ with $s\in\{+,-\}$. For $(j,t)\in \{(\ell-1, +), (\ell+1,-)\}$,
\begin{equation}\label{eqn:imp2imp&error}
    \imph{\Gamma_j^t}{\mathcal{T}_{h,j,\ell}\bw_{h,\ell}}=\mathcal{I}^h_{\Gamma_{\ell}^s\rightarrow\Gamma_j^t}(\imph{\Gamma_\ell^s}{\bw_{h,\ell}}). 
\end{equation}
\end{theorem}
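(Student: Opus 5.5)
We prove \eqref{eqn:imp2imp&error} for $(j,t)=(\ell-1,+)$; the case $(\ell+1,-)$ is symmetric. Both sides lie in $\bm U_{h,T}(\Gamma_{\ell-1}^+)$, and the pairing $\langle\cdot,\cdot\rangle_{\Gamma_{\ell-1}^+}$ is an inner product on this finite-dimensional space. It therefore suffices to show that both sides have the same pairing with $(\bv_{h,\ell-1})_T$ for every $\bv_{h,\ell-1}\in\bm U_{h,\ell-1}$. Such traces exhaust $\bm U_{h,T}(\Gamma_{\ell-1}^+)$, because any element extends via the hat construction $\hat{\bv}_{\Gamma,\ell-1}$.

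\emph{Left-hand side.} By Proposition~\ref{prop:error_range}(ii), $\mathcal T_{h,\ell-1,\ell}\bw_{h,\ell}\in\bm U_{0,h,\ell-1}^+$. Lemma~\ref{lem:imp_zero} then shows that its discrete impedance data vanishes on $\Gamma_{\ell-1}^-$, so $\imph{\Gamma_{\ell-1}}{\cdot}$ restricted to $\Gamma_{\ell-1}^+$ carries all the data. Proposition~\ref{prop:discrete_impedance} and Definition~\ref{def:error_prop} then give
\begin{equation*}
\langle \imph{\Gamma_{\ell-1}^+}{\mathcal T_{h,\ell-1,\ell}\bw_{h,\ell}},(\bv_{h,\ell-1})_T\rangle_{\Gamma_{\ell-1}^+}
= a_{\Omega_{\ell-1}}(\mathcal T_{h,\ell-1,\ell}\bw_{h,\ell},\bv_{h,\ell-1})
= b_{h,\ell-1}(\widetilde{\mathcal R}^\top_{h,\ell}\bw_{h,\ell},\bv_{h,\ell-1}).
\end{equation*}

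\emph{Right-hand side.} Set $\bg_T:=\imph{\Gamma_\ell^s}{\bw_{h,\ell}}$. Since $\bw_{h,\ell}\in\bm U_{0,h,\ell}^s$, Proposition~\ref{prop:discrete_impedance} and Lemma~\ref{lem:imp_zero} give
\begin{equation*}
a_{\Omega_\ell}(\bw_{h,\ell},\bv_{h,\ell})=\langle\bg_T,(\bv_{h,\ell})_T\rangle_{\Gamma_\ell^s}\quad\text{for all }\bv_{h,\ell}\in\bm U_{h,\ell}.
\end{equation*}
Thus $\bw_{h,\ell}$ is exactly the solution of \eqref{eq:aOmegaj} with data $\bg_T$; uniqueness comes from Assumption~\ref{assump:local-well-posedness}. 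Theorem~\ref{thm:dis-iip-aa} then yields
\begin{equation*}
\langle \cI^h_{\Gamma_\ell^s\to\Gamma_{\ell-1}^+}\bg_T,(\bv_{h,\ell-1})_T\rangle_{\Gamma_{\ell-1}^+}=b_{h,\ell-1}(\widetilde{\mathcal R}^\top_{h,\ell}\bw_{h,\ell},\bv_{h,\ell-1}).
\end{equation*}

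The two pairings coincide for all $\bv_{h,\ell-1}$, which proves the identity. The main point to check carefully is the identification of $\bw_{h,\ell}$ with the solution of \eqref{eq:aOmegaj}. This requires that the full interface data of $\bw_{h,\ell}$ be supported on $\Gamma_\ell^s$, which Lemma~\ref{lem:imp_zero} provides. A second point is that the restriction in the left-hand side, defined via \eqref{eq:imp+-}, agrees with the pairing over all of $\Gamma_{\ell-1}$; this also follows from Lemma~\ref{lem:imp_zero}.
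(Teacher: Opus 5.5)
Your proposal is correct and follows the paper's proof essentially step for step. You reduce the left-hand side to $b_{h,\ell-1}(\widetilde{\mathcal{R}}^\top_{h,\ell}\bw_{h,\ell},\bv_{h,\ell-1})$ via Proposition~\ref{prop:error_range}, Lemma~\ref{lem:imp_zero}, Proposition~\ref{prop:discrete_impedance} and Definition~\ref{def:error_prop}, and you handle the right-hand side by identifying $\bw_{h,\ell}$ as the solution of \eqref{eq:aOmegaj} and invoking Theorem~\ref{thm:dis-iip-aa}; the extra details you supply (nondegeneracy of the pairing on $\bm U_{h,T}(\Gamma_{\ell-1}^+)$ and uniqueness from Assumption~\ref{assump:local-well-posedness}) are points the paper leaves implicit.
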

\begin{proof}
We give the proof for the case when $(j,t)=(\ell-1,+)$; the proofs of the other cases are similar. By Proposition~\ref{prop:error_range}, $\mathcal{T}_{h,\ell-1,\ell}\bw_{h,\ell}\in \bm{U}_{0,h,\ell-1}^+$. Then $\langle\imph{\Gamma_{\ell-1}^+}{\mathcal{T}_{h,\ell-1,\ell}\bw_{h,\ell}},(\bv_{h,\ell-1})_T\rangle_{\Gamma_{\ell-1}^+}=a_{\Omega_{\ell-1}}(\mathcal{T}_{h,\ell-1,\ell}\bw_{h,\ell}, \bv_{h,\ell-1})$ for all $\bv_{h,\ell-1}\in \bm{U}_{h,\ell-1}$ from Proposition \ref{prop:discrete_impedance} and Lemma \ref{lem:imp_zero}. By the definition of $\mathcal{T}_{h,\ell-1,\ell}$ (Definition~\ref{def:error_prop}),  $a_{\Omega_{\ell-1}}(\mathcal{T}_{h,\ell-1,\ell}\bw_{h,\ell},\bv_{h,\ell-1})=b_{h,\ell-1}(\widetilde{\mathcal{R}}_{h,\ell}^\top \bw_{h,\ell},\bv_{h,\ell-1})$. Therefore, 
$$\langle\imph{\Gamma_{\ell-1}^+}{\mathcal{T}_{h,\ell-1,\ell}\bw_{h,\ell}},(\bv_{h,\ell-1})_T\rangle_{\Gamma_{\ell-1}^+}=b_{h,\ell-1}(\widetilde{\mathcal{R}}_{h,\ell}^\top \bw_{h,\ell},\bv_{h,\ell-1}).$$ 
Furthermore, since $\bw_{h,\ell}\in \bm{U}_{0,h,\ell}^s$,  $\bw_{h,\ell}$ solves $a_{\Omega_\ell}(\bw_{h,\ell}, \bv_{h,\ell})=\langle \imph{\Gamma_\ell^s}{\bw_{h,\ell}},(\bv_{h,\ell})_T\rangle_{\Gamma_\ell^s}$. By Theorem \ref{thm:dis-iip-aa}, \eqref{eqn:imp2imp&error} holds.
\end{proof}

\section{Convergence analysis}\label{sec:6}
 
In this section, we study the convergence of the discrete Schwarz error
propagation operator to its continuous-level analogue.
The main
idea is to represent both the continuous and discrete error propagation
operators by corresponding impedance-to-impedance maps on the 
interfaces. This reduces the comparison of the powers
\(\boldsymbol{\mathcal T}_h^n\) and \(\boldsymbol{\mathcal T}^n\) to the
comparison of the interface operators \(\boldsymbol{\mathcal I}^h\) and
\(\boldsymbol{\mathcal I}\).
\subsection{Definition of the impedance-to-impedance map acting on the global interface data}

We first collect the impedance traces on all artificial interfaces into global
interface vectors. This allows us to express one Schwarz error-propagation step
as a single impedance-to-impedance operator acting on the global interface data. 
For $s\in\{+,-\}$, let 
\begin{equation}\label{eq:tensor_L}
    \boldsymbol{\Gamma}^s:=\prod_{\ell=1}^N\Gamma_\ell^s, \quad\mathbb{L}_T(\boldsymbol{\Gamma}^s):=\prod_{\ell=1}^N \bL^2_{T}(\Gamma_\ell^s)
\end{equation}
equipped with the norm 
$$
\|\mathbf{g}^s\|_{0,\boldsymbol{\Gamma}^s}^2:=\sum_{\ell=1}^N\|\mathbf{g}^s_\ell\|^2_{0,\Gamma_\ell^s}, \quad\text{for }\mathbf{g}^s=(\mathbf{g}^s_1,\dotso,\mathbf{g}^s_N)\in\mathbb{L}_T(\boldsymbol{\Gamma}^s).
$$
We define
$\mathbb{L}_T(\boldsymbol{\Gamma}):=\mathbb{L}_T(\boldsymbol{\Gamma}^-)\times\mathbb{L}_T(\boldsymbol{\Gamma}^+)$ equipped with the norm
$$\|\mathbf{g}\|_{0,\boldsymbol{\Gamma}}^2:=\|\mathbf{g}^-\|^2_{0,\boldsymbol{\Gamma}^-} + \|\mathbf{g}^+\|^2_{0,\boldsymbol{\Gamma}^+},\quad\text{for }\mathbf{g}=(\mathbf{g}^-,\mathbf{g}^+)\in\mathbb{L}_T(\boldsymbol{\Gamma}).
$$ 
We define $\text{imp}_{\boldsymbol{\Gamma}^s}:\mathbb{U}_0\rightarrow \mathbb{L}_T(\boldsymbol{\Gamma}^s)$ by
\begin{equation*}
    \imp{\boldsymbol{\Gamma}^s}{\mathfrak v}
:=
\big(
\imp{\Gamma_1^s}{\bv_1},\ldots,
\imp{\Gamma_\ell^s}{\bv_\ell},\ldots,
\imp{\Gamma_N^s}{\bv_N}
\big).
\end{equation*}
for $\mathfrak{v}=(\bv_1,\dotso,\bv_\ell,\dotso,\bv_N)\in \mathbb{U}_0$.
Combining $s=+$ and $s=-$, we define $\text{imp}_{\boldsymbol{\Gamma}}:\mathbb{U}_0\rightarrow \mathbb{L}_T(\boldsymbol{\Gamma})$ by
\begin{equation}\label{e:imp}
\text{imp}_{\boldsymbol{\Gamma}}:=\left(\begin{array}{c}
        \text{imp}_{\boldsymbol{\Gamma}^-}\\
         \text{imp}_{\boldsymbol{\Gamma}^+}    
         \end{array}\right).
\end{equation}

These definitions, along with the definition of $\|\cdot\|_{\mathbb{U}_0}$ in \eqref{eq:normU0}, imply the following.

\begin{lemma}\label{lem:norm_equiv}
    For any $\mathfrak{v}=(\bv_1,\cdots,\bv_N)\in\mathbb{U}_0$ with zero impedance data on $\partial\Omega$, 
    \begin{equation}
    \label{eq:normU0new}
        \|\mathfrak{v}\|_{\mathbb{U}_0}=
        \|\operatorname{imp}\mathfrak{v}\|_{0,\bm{\Gamma}},
    \end{equation}
where $\|\cdot\|_{\mathbb{U}_0}$ is defined by \eqref{eq:normU0}.
    \end{lemma}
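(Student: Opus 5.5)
The identity \eqref{eq:normU0new} should follow by unwinding the definitions, working one subdomain at a time and then summing. Fix $\mathfrak{v}=(\bv_1,\ldots,\bv_N)\in\mathbb{U}_0$ with zero impedance data on $\partial\Omega$, i.e.\ $\imp{\ell}{\bv_\ell}=\bm{0}$ on $\partial\Omega_\ell\cap\partial\Omega$ for every $\ell$. By \eqref{eq:normU0},
\[
\|\mathfrak{v}\|_{\mathbb{U}_0}^2=\sum_{\ell=1}^N\|\bv_\ell\|_{1,k,\partial\Omega_\ell}^2 .
\]
So it suffices to show, for each $\ell$,
\[
\|\bv_\ell\|_{1,k,\partial\Omega_\ell}^2=\|\imp{\Gamma_\ell^-}{\bv_\ell}\|_{0,\Gamma_\ell^-}^2+\|\imp{\Gamma_\ell^+}{\bv_\ell}\|_{0,\Gamma_\ell^+}^2 .
\]
Summing over $\ell$ and comparing with the definition of $\|\cdot\|_{0,\boldsymbol{\Gamma}}$ (sum of the $\boldsymbol{\Gamma}^-$ and $\boldsymbol{\Gamma}^+$ norms) and of $\operatorname{imp}_{\boldsymbol{\Gamma}}$ in \eqref{e:imp} then gives the claim.

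For the per-subdomain identity, the plan is to use \eqref{eq:=boundary} from Theorem \ref{thm:equivalent norm} with the minus sign, under the standing convention $\lambda=1$ of that theorem. For $\bv_\ell\in\bm{U}_0(\Omega_\ell)$ this gives $\|\bv_\ell\|_{1,k,\partial\Omega_\ell}^2=\|\mu^{-1}\curl\bv_\ell\times\bn_\ell-\ri k(\bv_\ell)_T\|_{0,\partial\Omega_\ell}^2$. The integrand is exactly $\imp{\ell}{\bv_\ell}$ in \eqref{eq:imp_map}, using $(\bn\times\bv)\times\bn=\bv_T$. Hence $\|\bv_\ell\|_{1,k,\partial\Omega_\ell}=\|\imp{\ell}{\bv_\ell}\|_{0,\partial\Omega_\ell}$. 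We then split $\partial\Omega_\ell=(\partial\Omega_\ell\cap\partial\Omega)\cup\Gamma_\ell^-\cup\Gamma_\ell^+$, up to sets of surface measure zero. The contribution from $\partial\Omega_\ell\cap\partial\Omega$ vanishes by hypothesis, and the remaining two pieces are the restrictions defining $\imp{\Gamma_\ell^\pm}{\bv_\ell}$.

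The only point needing care is this geometric splitting: that for a strip decomposition $\Gamma_\ell=\partial\Omega_\ell\setminus\partial\Omega$ is the disjoint union of $\Gamma_\ell^-=\partial\Omega_\ell\cap\Omega_{\ell-1}$ and $\Gamma_\ell^+=\partial\Omega_\ell\cap\Omega_{\ell+1}$, modulo null sets. Definition \ref{def:strip}(iii)--(iv) should give this. Since $\overline{\Omega}=\cup\overline{\Omega_j}$ and only the neighbours $\ell\pm1$ meet $\Omega_\ell$, each interior boundary point of $\Omega_\ell$ lies in $\overline{\Omega_{\ell-1}}\cup\overline{\Omega_{\ell+1}}$. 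Also $\Gamma_\ell^-\cap\Gamma_\ell^+\subset\Omega_{\ell-1}\cap\Omega_{\ell+1}=\emptyset$. Leftover boundary pieces of zero measure do not affect $L^2$ norms. For the end subdomains $\ell=1,N$, one of $\Gamma_\ell^\pm$ is empty and contributes zero.
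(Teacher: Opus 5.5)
Your argument is exactly the definition-unwinding the paper has in mind. Theorem \ref{thm:equivalent norm}, via \eqref{eq:=boundary} (with $\lambda=\widetilde{\lambda}=1$), gives $\|\bv_\ell\|_{1,k,\partial\Omega_\ell}=\|\imp{\ell}{\bv_\ell}\|_{0,\partial\Omega_\ell}$; the part on $\partial\Omega_\ell\cap\partial\Omega$ vanishes, and the rest splits over $\Gamma_\ell^\pm$ and sums to $\|\operatorname{imp}\mathfrak{v}\|_{0,\bm{\Gamma}}^2$.

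One caveat concerns the splitting $\Gamma_\ell=\Gamma_\ell^-\cup\Gamma_\ell^+$ up to null sets: it does not actually follow from Definition \ref{def:strip}(iii)--(iv). Disjointness of $\Omega_\ell$ and $\Omega_j$ for $|j-\ell|>1$ does not stop $\overline{\Omega_j}$ from meeting $\partial\Omega_\ell$ in a set of positive measure. Also, landing in $\overline{\Omega_{\ell\pm1}}$ is weaker than landing in $\Omega_{\ell\pm1}$. So, as the paper does, you should take this splitting as part of the strip set-up rather than claim to derive it.
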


For $t\in\{+,-\}$, we define the impedance-to-impedance map $\mathcal{I}_{s\rightarrow t}:\mathbb{L}_T(\boldsymbol{\Gamma}^s)\rightarrow\mathbb{L}_T(\boldsymbol{\Gamma}^t)$ by
\begin{align*}
\mathcal{I}_{s\rightarrow -}&:=\begin{pmatrix}
  0               &        \\
   \mathcal{I}_{\Gamma_1^s\rightarrow\Gamma_2^-}      &       0               &\\
                  &     \mathcal{I}_{\Gamma_2^s\rightarrow\Gamma_3^-}       &0              &\\
                  &                      &   \ddots      &      \ddots  & \\
                  &                       &       &       \mathcal{I}_{\Gamma_{N-2}^s\rightarrow\Gamma_{N-1}^-}         & 0&\\
                  &                       &                       &    &  \mathcal{I}_{\Gamma_{N-1}^s\rightarrow\Gamma_{N}^-}&0
  \end{pmatrix},\\
  \mathcal{I}_{s\rightarrow +}&:=\begin{pmatrix}
  0               &       \mathcal{I}_{\Gamma_2^s\rightarrow\Gamma_1^+} \\
         &       0               &\mathcal{I}_{\Gamma_3^s\rightarrow\Gamma_2^+}\\
                  &            &0              &\mathcal{I}_{\Gamma_4^s\rightarrow\Gamma_3^+}\\
                  &                      &         &      \ddots  & \ddots\\
                  &                       &       &                & 0&\mathcal{I}_{\Gamma_{N}^s\rightarrow\Gamma_{N-1}^+}\\
                  &                       &                       &    &  &0
  \end{pmatrix}.
\end{align*}
We define the impedance-to-impedance map $\boldsymbol{\mathcal{I}}:\mathbb{L}_T(\boldsymbol{\Gamma})\rightarrow \mathbb{L}_T(\boldsymbol{\Gamma})$ by
\begin{equation*}
    \boldsymbol{\mathcal{I}}:=\left(\begin{array}{cc}
         \mathcal{I}_{-\rightarrow -} &\mathcal{I}_{+\rightarrow -}\\
         \mathcal{I}_{-\rightarrow +} &\mathcal{I}_{+\rightarrow +}    \end{array}
    \right).
\end{equation*}
The following result shows that the global error propagation operator
\(\boldsymbol{\mathcal T}\) is completely represented, at the level of
interface impedance traces, by the global impedance-to-impedance map
\(\boldsymbol{\mathcal I}\).
\begin{theorem}\label{thm:impI}
Suppose that $\{\Omega_\ell\}_{\ell=1}^N$ is a strip decomposition with $N\geq 2$.
Then 
\begin{equation}\label{e:impI}
    \imp{\boldsymbol{\Gamma}}{\boldsymbol{\cT}\mathfrak{v}}=\boldsymbol{\mathcal{I}}\imp{\boldsymbol{\Gamma}}{\mathfrak{v}}
    \quad \text{ for all } \mathfrak{v}\in\mathbb{U}_0.
\end{equation}    
\end{theorem}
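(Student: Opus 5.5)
The plan is to verify \eqref{e:impI} blockwise, component by component on each interface $\Gamma_j^t$, $t\in\{+,-\}$. By the sparse form \eqref{eq:splitT}, $(\boldsymbol{\cT}\mathfrak v)_j=\cT_{j,j-1}\bv_{j-1}+\cT_{j,j+1}\bv_{j+1}$. The $j$-th components of the two sides of \eqref{e:impI} on $\Gamma_j^-$ are therefore:
\begin{itemize}
\item on the left, $\imp{j}{\cT_{j,j-1}\bv_{j-1}}+\imp{j}{\cT_{j,j+1}\bv_{j+1}}$ restricted to $\Gamma_j^-$;
\item on the right, $\mathcal{I}_{\Gamma_{j-1}^-\to\Gamma_j^-}\imp{\Gamma_{j-1}^-}{\bv_{j-1}}+\mathcal{I}_{\Gamma_{j-1}^+\to\Gamma_j^-}\imp{\Gamma_{j-1}^+}{\bv_{j-1}}$.
\end{itemize}
The $\Gamma_j^+$ component is the mirror image and is handled the same way.

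First I will dispose of the cross term $\cT_{j,j+1}\bv_{j+1}$ on $\Gamma_j^-$. By \eqref{eqn:local_error}, its impedance data on $\Gamma_j^-$ equals $\imp{j}{\chi_{j+1}\bv_{j+1}}$. For a strip decomposition $\chi_{j+1}$ vanishes near $\Gamma_j^-$, because $\Gamma_j^-\subset\Omega_{j-1}$ is disjoint from $\Omega_{j+1}$ by Definition~\ref{def:strip}(iv). So this term contributes nothing, and it remains to treat $\imp{j}{\cT_{j,j-1}\bv_{j-1}}$ on $\Gamma_j^-$.

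By \eqref{eqn:local_error} this equals $\imp{j}{\chi_{j-1}\bv_{j-1}}$ on $\Gamma_{j,j-1}=\Gamma_j^-$. The computation in the proof of Theorem~\ref{thm:error_imp2imp} uses $\chi_{j-1}=1$ and $\nabla\chi_{j-1}=\bm 0$ there, by \eqref{eq:chi-strip}, and so gives $\imp{j}{\bv_{j-1}}$ on $\Gamma_j^-$.

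Next, as in the proof of Lemma~\ref{assump:TLU}, I decompose $\bv_{j-1}=\bv_{j-1}^-+\bv_{j-1}^+$. Here $\bv_{j-1}^{\pm}\in\bm U_0(\Omega_{j-1})$ is the Maxwell-harmonic function whose impedance data equals $\imp{\Gamma_{j-1}^{\pm}}{\bv_{j-1}}$ on $\Gamma_{j-1}^{\pm}$ and vanishes on the rest of $\partial\Omega_{j-1}$. Its existence and uniqueness follow from Lemma~\ref{lem:well pose}. The identity $\bv_{j-1}=\bv_{j-1}^-+\bv_{j-1}^+$ then holds by uniqueness for the impedance problem, since the decomposition has the same impedance data on all of $\partial\Omega_{j-1}$. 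This uses that $\mathfrak v\in\mathbb U_0$ has zero data on $\partial\Omega$; this is implicit in the paper's setting, since error vectors lie in the range of $\boldsymbol{\cT}$, and otherwise one adds the boundary contribution, which I will note. By linearity, and by Definition~\ref{def:imp2imp} with $\bv_j$ replaced by $\bv_{j-1}^{\pm}$, we get
\[
\imp{j}{\bv_{j-1}^{\pm}}|_{\Gamma_j^-}=\mathcal{I}_{\Gamma_{j-1}^{\pm}\to\Gamma_j^-}\imp{\Gamma_{j-1}^\pm}{\bv_{j-1}}.
\]
This is exactly the $j$-th row of $\mathcal{I}_{-\to-}$ and $\mathcal{I}_{+\to-}$.

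The boundary rows $j=1$ on $\Gamma^-$ and $j=N$ on $\Gamma^+$ are empty interfaces, corresponding to the zero rows of the matrices. The main obstacle I anticipate is bookkeeping. One must check that the interface $\Gamma_{j-1}^-$ on which data is prescribed, with target $\Gamma_j^-$, matches the orientation and normal convention in Definition~\ref{def:imp2imp}: the map is defined with ${\rm imp}_{\ell'}$ using $\bn_j$, the normal of the target subdomain. One must also confirm that $\Gamma_j^-\subset\Omega_{j-1}$ is a Lipschitz interior surface, so that the trace is meaningful by Lemma~\ref{lem:bound imp}.
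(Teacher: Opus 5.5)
Your proposal is correct and follows the paper's proof essentially step for step: you kill the cross terms $\imp{\Gamma_j^-}{\cT_{j,j+1}\bv_{j+1}}$ and $\imp{\Gamma_j^+}{\cT_{j,j-1}\bv_{j-1}}$ using the support of the partition of unity, replace $\chi_{j-1}\bv_{j-1}$ by $\bv_{j-1}$ on $\Gamma_j^-$ via \eqref{eq:chi-strip}, and then express $\imp{\Gamma_j^-}{\bv_{j-1}}$ through $\cI_{\Gamma_{j-1}^\pm\to\Gamma_j^-}$. Your explicit splitting $\bv_{j-1}=\bv_{j-1}^-+\bv_{j-1}^+$ needs zero impedance data on $\partial\Omega_{j-1}\cap\partial\Omega$, and you flag this correctly; it is automatic on the range of $\boldsymbol{\cT}$ but fails for general $\mathfrak v\in\mathbb U_0$, where the left-hand side of \eqref{e:impI} also depends on that boundary data, so you make explicit a hypothesis the paper's proof uses only tacitly.
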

\begin{proof}
    For $\mathfrak{v}=(\bv_1,\dotso,\bv_N)\in \mathbb{U}_0$, by the definition of ${\rm imp}$ \eqref{e:imp} and the fact that $\boldsymbol{\cT}= \boldsymbol{\mathcal{L}}+ \boldsymbol{\mathcal{U}}$,
\begin{equation}\label{eq1}
\begin{aligned}
    \imp{\boldsymbol{\Gamma}}{\boldsymbol{\cT}\mathfrak{v}}&=\begin{pmatrix}
\imp{\boldsymbol{\Gamma}^-}{\boldsymbol{\cT}\mathfrak{v}}\\
\imp{\boldsymbol{\Gamma}^+}{\boldsymbol{\cT}\mathfrak{v}}
    \end{pmatrix}=\begin{pmatrix}
\imp{\boldsymbol{\Gamma}^-}{\boldsymbol{\mathcal{L}}\mathfrak{v}}+\imp{\boldsymbol{\Gamma}^-}{\boldsymbol{\mathcal{U}}\mathfrak{v}}\\
\imp{\boldsymbol{\Gamma}^+}{\boldsymbol{\mathcal{L}}\mathfrak{v}}+\imp{\boldsymbol{\Gamma}^+}{\boldsymbol{\mathcal{U}}\mathfrak{v}}
    \end{pmatrix}.
\end{aligned}
\end{equation}
Now $\imp{\boldsymbol{\Gamma}^-}{\boldsymbol{\mathcal{U}}\mathfrak{v}}=\bm{0}$ and $\imp{\boldsymbol{\Gamma}^+}{\boldsymbol{\mathcal{L}}\mathfrak{v}}=\bm{0}$ by the definition of PoU, and error propagation operator defined in \eqref{eqn:local_error}.
More precisely, 
by the definitions of $\boldsymbol{\mathcal{L}}$ and 
$\boldsymbol{\mathcal{U}}$,
the $j$-th row of \eqref{eq1} is 
\begin{equation*}
    \imp{\Gamma_j^-}{\cT_{j,j-1}\bv_{j-1}}+\imp{\Gamma_j^-}{\cT_{j,j+1}\bv_{j+1}}
\end{equation*}
where $\imp{\Gamma_j^-}{\cT_{j,j+1}\bv_{j+1}}=\imp{\Gamma_j^-}{\chi_{j+1}\bv_{j+1}}$ by \eqref{eqn:local_error}.
By the support property of the partition of unity, 
\(\chi_{j+1}\) vanishes in a neighbourhood of \(\Gamma_j^-\). Hence
\(\chi_{j+1}\bv_{j+1}\), and therefore its impedance trace, vanish on
\(\Gamma_j^-\). 
Consequently,
$\imp{\Gamma_j^-}{\boldsymbol{\mathcal{U}}\mathfrak{v}}=\bm{0}$. The identity $\imp{\Gamma_j^+}{\boldsymbol{\mathcal{L}}\mathfrak{v}}=\bm{0}$ follows analogously.
It remains to express
$\imp{\boldsymbol{\Gamma}^-}{\boldsymbol{\mathcal{L}}\mathfrak{v}}$ and
$\imp{\boldsymbol{\Gamma}^+}{\boldsymbol{\mathcal{U}}\mathfrak{v}}$ in terms
of the impedance-to-impedance maps.
Similar to the proof in Lemma \ref{assump:TLU}, 
by \eqref{eqn:local_error}, \eqref{eq:chi-strip} and Definition \ref{def:imp2imp},
\begin{align*}
    \imp{\Gamma_j^-}{\cT_{j,j-1}\bv_{j-1}}=\imp{\Gamma_j^-}{\chi_{j-1}\bv_{j-1}}&=\imp{\Gamma_j^-}{\bv_{j-1}}\\&=\cI_{\Gamma_{j-1}^-\to\Gamma_j^-}\imp{\Gamma_{j-1}^-}{\bv_{j-1}}+\cI_{\Gamma_{j-1}^+\to\Gamma_j^-}\imp{\Gamma_{j-1}^+}{\bv_{j-1}}.
\end{align*}
Applying the same argument to all  interfaces gives:
\begin{equation*}
    \begin{aligned}
    \imp{\boldsymbol{\Gamma}}{\boldsymbol{\cT}\mathfrak{v}}=\begin{pmatrix}
\imp{\boldsymbol{\Gamma}^-}{\boldsymbol{\mathcal{L}}\mathfrak{v}}\\
\imp{\boldsymbol{\Gamma}^+}{\boldsymbol{\mathcal{U}}\mathfrak{v}}
    \end{pmatrix}&=\begin{pmatrix}
\mathcal{I}_{-\rightarrow -}\imp{\boldsymbol{\Gamma}^-}{\mathfrak{v}}+\mathcal{I}_{+\rightarrow -}\imp{\boldsymbol{\Gamma}^+}{\mathfrak{v}}\\
\mathcal{I}_{-\rightarrow +}\imp{\boldsymbol{\Gamma}^-}{\mathfrak{v}}+\mathcal{I}_{+\rightarrow +}\imp{\boldsymbol{\Gamma}^+}{\mathfrak{v}}
    \end{pmatrix}\\
    &=\begin{pmatrix}
\mathcal{I}_{-\rightarrow -} &\mathcal{I}_{+\rightarrow -}\\
\mathcal{I}_{-\rightarrow +} &\mathcal{I}_{+\rightarrow +}    \end{pmatrix}\begin{pmatrix}
\imp{\boldsymbol{\Gamma}^-}{\mathfrak{v}}\\
\imp{\boldsymbol{\Gamma}^+}{\mathfrak{v}}
    \end{pmatrix}\\ 
    &=\boldsymbol{\mathcal{I}}\imp{\boldsymbol{\Gamma}}{\mathfrak{v}}.
    \end{aligned}
\end{equation*}
\end{proof}

The result \eqref{e:impI} is useful because, by \eqref{eq:normU0new}, the norm of the local error is
equivalent to the norm of its impedance data on the interior interfaces. 

We now above the discrete analogue of Theorem \ref{thm:impI}.
For \(s\in\{+,-\}\), let
\[
    \mathbb U_{h,T}(\boldsymbol{\Gamma}^s)
    :=
    \prod_{\ell=1}^N \bm U_{h,T}(\Gamma_\ell^s),
    \quad
    \mathbb U_{h,T}(\boldsymbol{\Gamma})
    :=
    \mathbb U_{h,T}(\boldsymbol{\Gamma}^-)
    \times
    \mathbb U_{h,T}(\boldsymbol{\Gamma}^+).
\]
Both spaces are equipped with the product \(L^2\)-norms
\[
    \|\mathbf g_h^s\|_{0,\boldsymbol{\Gamma}^s}^2
    :=
    \sum_{\ell=1}^N
    \|\mathbf g_{h,\ell}^s\|_{0,\Gamma_\ell^s}^2,
    \quad
    \|\mathbf g_h\|_{0,\boldsymbol{\Gamma}}^2
    :=
    \|\mathbf g_h^-\|_{0,\boldsymbol{\Gamma}^-}^2
    +
    \|\mathbf g_h^+\|_{0,\boldsymbol{\Gamma}^+}^2 .
\]
The discrete global
impedance-to-impedance map
\(\boldsymbol{\mathcal I}^h:\mathbb U_{h,T}(\boldsymbol{\Gamma})
\to \mathbb U_{h,T}(\boldsymbol{\Gamma})\) is defined by
\begin{equation*}
    \boldsymbol{\mathcal I}^h
    :=
    \begin{pmatrix}
         \mathcal I^h_{-\to -} & \mathcal I^h_{+\to -}\\
         \mathcal I^h_{-\to +} & \mathcal I^h_{+\to +}
    \end{pmatrix}.
\end{equation*}

Repeating the proof of Theorem \ref{thm:impI}, but now using Lemma \ref{lem:imp_zero} and Theorem \ref{thm:IT}, 
we obtain the following result.

\begin{theorem}\label{thm:impI_h} 
Suppose that $\{\Omega_\ell\}_{\ell=1}^N$ is a strip decomposition with $N\geq 2$. Then
 \begin{equation*}
    \imph{\boldsymbol{\Gamma}}{\boldsymbol{\cT}_h\mathfrak{v}_h}=\boldsymbol{\mathcal{I}}^h\imph{\boldsymbol{\Gamma}}{\mathfrak{v}_h}
    \quad \text{ for all } \mathfrak{v}_h\in \mathbb{U}_{0,h}.
\end{equation*}    
\end{theorem}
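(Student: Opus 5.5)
The plan is to repeat the structure of the proof of Theorem \ref{thm:impI}, with the continuous facts replaced by their discrete counterparts. First I decompose the argument: for $\mathfrak{v}_h=(\bv_{h,1},\dots,\bv_{h,N})\in\mathbb{U}_{0,h}$, Lemma \ref{lem:de_space} gives $\bv_{h,\ell}=\bv_{h,\ell}^++\bv_{h,\ell}^-$ with $\bv_{h,\ell}^\pm\in\bm U_{0,h,\ell}^\pm$, where $\imph{\Gamma_\ell^\pm}{\bv_{h,\ell}^\pm}=\imph{\Gamma_\ell}{\bv_{h,\ell}}|_{\Gamma_\ell^\pm}$ by construction and Lemma \ref{lem:imp_zero}. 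Next I split $\boldsymbol{\cT}_h$ into its lower part (entries $\cT_{h,\ell,\ell-1}$) and upper part (entries $\cT_{h,\ell,\ell+1}$). By Proposition \ref{prop:error_range}, $\cT_{h,j,j-1}\bv_{h,j-1}\in\bm U_{0,h,j}^-$ and $\cT_{h,j,j+1}\bv_{h,j+1}\in\bm U_{0,h,j}^+$. Lemma \ref{lem:imp_zero} then gives $\imph{\Gamma_j^+}{\cT_{h,j,j-1}\bv_{h,j-1}}=\bm 0$ and $\imph{\Gamma_j^-}{\cT_{h,j,j+1}\bv_{h,j+1}}=\bm 0$. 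These are the discrete analogues of $\imp{\boldsymbol\Gamma^+}{\boldsymbol{\mathcal L}\mathfrak v}=\bm 0$ and $\imp{\boldsymbol\Gamma^-}{\boldsymbol{\mathcal U}\mathfrak v}=\bm 0$. Consequently the $j$-th $\Gamma^-$ component of $\imph{\boldsymbol\Gamma}{\boldsymbol\cT_h\mathfrak v_h}$ is $\imph{\Gamma_j^-}{\cT_{h,j,j-1}\bv_{h,j-1}}$, and the $\Gamma^+$ component is $\imph{\Gamma_j^+}{\cT_{h,j,j+1}\bv_{h,j+1}}$. Here I am using linearity of $\imph{\Gamma_j}{\cdot}$, which holds because it is defined through the linear identity \eqref{eqn:discrete-impedance-data}.

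Next I use linearity of $\cT_{h,j,j-1}$ to write $\cT_{h,j,j-1}\bv_{h,j-1}=\cT_{h,j,j-1}\bv_{h,j-1}^-+\cT_{h,j,j-1}\bv_{h,j-1}^+$. Applying Theorem \ref{thm:IT} with $\ell=j-1$, $(j,t)=(\ell+1,-)$ and $s=\mp$ gives
\begin{equation*}
\imph{\Gamma_j^-}{\cT_{h,j,j-1}\bv_{h,j-1}}=\cI^h_{\Gamma_{j-1}^-\to\Gamma_j^-}\imph{\Gamma_{j-1}^-}{\bv_{h,j-1}}+\cI^h_{\Gamma_{j-1}^+\to\Gamma_j^-}\imph{\Gamma_{j-1}^+}{\bv_{h,j-1}}.
\end{equation*}
The $\Gamma^+$ rows are handled symmetrically, with $(j,t)=(\ell-1,+)$. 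Collecting the rows reproduces exactly the block structure of $\boldsymbol{\mathcal I}^h$, with the first and last rows (where there is no $\Gamma_1^-$ or $\Gamma_N^+$) giving the zero rows of $\mathcal I^h_{s\to\pm}$.

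The one delicate point is the boundary subdomains. For $\ell=1$ or $\ell=N$, one of $\Gamma_\ell^\pm$ is empty, so the corresponding component of the decomposition is zero. I also need $\imph{\Gamma_\ell}{\cdot}$ restricted to $\Gamma_\ell^s$ to agree with the one-sided operator \eqref{eq:imp+-} used in Theorem \ref{thm:IT}. This agreement holds because the test functions $\hat{\bw}_{\Gamma_\ell^s,\ell}$ supported on one side are admissible in \eqref{eqn:discrete-impedance-data}. I expect this bookkeeping, rather than any analytic estimate, to be the main obstacle; the rest is a direct transcription of the proof of Theorem \ref{thm:impI}.
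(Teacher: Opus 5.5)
Your proposal is correct and follows the paper's route. The paper proves the result by repeating the proof of Theorem \ref{thm:impI}, using Lemma \ref{lem:imp_zero} to kill the off-side blocks and Theorem \ref{thm:IT} to supply the surviving entries of $\boldsymbol{\mathcal I}^h$; your explicit use of Proposition \ref{prop:error_range} and of the splitting in Lemma \ref{lem:de_space} only spells out steps the paper leaves implicit, namely that Theorem \ref{thm:IT}, stated only for $\bm{U}_{0,h,\ell}^s$, must be applied to each one-sided component.
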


\subsection{Convergence of the discrete error propagation}
First, we show that norms of powers of the 
error propagation matrix 
equal norms of powers of the 
global impedance-to-impedance map. This
identity allows us to reduce the analysis of the error propagation to the analysis of the global impedance-to-impedance map.
\begin{theorem}\label{thm:norm_equiv_hc}
Suppose that $\{\Omega_\ell\}_{\ell=1}^N$ is a strip decomposition with $N\geq 2$. Then, for $n\geq 1$,
$$\|\bm{\cT}^n\|_{\mathbb{U}_0}=\|\bm{\cI}^n\|_{0,\bm{\Gamma}},\quad
         \|\bm{\cT}_h^n\|_{\mathbb{U}_{0,h}}=\|(\bm{\cI}^h)^n\|_{0,\bm{\Gamma}}.
   $$
\end{theorem}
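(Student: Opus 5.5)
The plan is to show that $\mathrm{imp}_{\boldsymbol{\Gamma}}$ is an isometric isomorphism that intertwines $\boldsymbol{\mathcal T}$ with $\boldsymbol{\mathcal I}$; the norm identity then follows directly. We treat the continuous and discrete cases in parallel.

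\emph{Continuous case.} Work on the subspace of $\mathbb{U}_0$ with zero impedance data on $\partial\Omega$. This subspace contains the range of $\boldsymbol{\mathcal T}$ by \eqref{eqn:local_error}, and it is where the error iterates live.

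First, by Lemma~\ref{lem:norm_equiv}, $\mathrm{imp}_{\boldsymbol\Gamma}$ is an isometry from this subspace into $\mathbb{L}_T(\boldsymbol\Gamma)$.

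Second, we show it is onto. Given $\mathbf g=(\mathbf g^-,\mathbf g^+)$, solve on each $\Omega_\ell$ the interior impedance problem with data $\mathbf g_\ell^\pm$ on $\Gamma_\ell^\pm$ and zero data on $\partial\Omega_\ell\cap\partial\Omega$. Lemma~\ref{lem:well pose} gives a solution in $\bm U_0(\Omega_\ell)$ whose impedance trace is exactly $\mathbf g$.

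Third, iterating Theorem~\ref{thm:impI} gives $\mathrm{imp}_{\boldsymbol\Gamma}(\boldsymbol{\mathcal T}^n\mathfrak v)=\boldsymbol{\mathcal I}^n\,\mathrm{imp}_{\boldsymbol\Gamma}(\mathfrak v)$ for all $n\ge1$. Hence
\[
\frac{\|\boldsymbol{\mathcal T}^n\mathfrak v\|_{\mathbb U_0}}{\|\mathfrak v\|_{\mathbb U_0}}
=\frac{\|\boldsymbol{\mathcal I}^n\mathbf g\|_{0,\boldsymbol\Gamma}}{\|\mathbf g\|_{0,\boldsymbol\Gamma}},
\qquad \mathbf g=\mathrm{imp}_{\boldsymbol\Gamma}\mathfrak v .
\]
Taking suprema over $\mathfrak v\neq0$, equivalently over all $\mathbf g\ne0$ by surjectivity, gives the first identity.

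\emph{Discrete case.} The argument has the same structure, with three substitutions.

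First, the isometry property holds by definition: Definition~\ref{def:discrete_norm} together with Lemma~\ref{lem:de_space} (in particular \eqref{eqn:norm_decomp}) gives $\|\mathfrak v_h\|_{\mathbb U_{0,h}}=\|\mathrm{imp}^h_{\boldsymbol\Gamma}\mathfrak v_h\|_{0,\boldsymbol\Gamma}$.

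Second, surjectivity onto $\mathbb U_{h,T}(\boldsymbol\Gamma)$ comes from Assumption~\ref{assump:local-well-posedness}. Given $\mathbf g_\ell\in\bm U_{h,T}(\Gamma_\ell)$, let $\bw_{h,\ell}$ solve $a_{\Omega_\ell}(\bw_{h,\ell},\bv)=\langle\mathbf g_\ell,\bv_T\rangle_{\Gamma_\ell}$ for all $\bv\in\bm U_{h,\ell}$. Then $\bw_{h,\ell}\in\bm U_{0,h,\ell}$. Comparing with \eqref{eqn:discrete-impedance-data} gives $\langle \mathrm{imp}^h(\bw_{h,\ell})-\mathbf g_\ell,\bw_\Gamma\rangle=0$ for all $\bw_\Gamma\in\bm U_{h,T}(\Gamma_\ell)$. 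Since both functions lie in that space, $\mathrm{imp}^h(\bw_{h,\ell})=\mathbf g_\ell$.

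Third, the intertwining relation is Theorem~\ref{thm:impI_h}. Iterating it requires $\boldsymbol{\mathcal T}_h$ to map $\mathbb U_{0,h}$ into itself, which is Proposition~\ref{prop:error_range}.

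The main obstacle is bookkeeping rather than analysis: making sure the operator norm of $\boldsymbol{\mathcal T}$ is taken on the correct domain. In the continuous setting, Lemma~\ref{lem:norm_equiv} needs zero data on $\partial\Omega$. Elements of $\mathbb U_0$ are arbitrary there, but $\boldsymbol{\mathcal T}$ sees only the interface data and its range has zero data on $\partial\Omega$. I would therefore interpret $\|\boldsymbol{\mathcal T}^n\|_{\mathbb U_0}$ on the relevant subspace, or note that the boundary component cannot increase the ratio, and state this convention explicitly. In the discrete setting, the point to check is that $\mathrm{imp}^h$ is injective on $\bm U_{0,h,\ell}$; this also follows from local well-posedness, as in the proof of Lemma~\ref{lem:de_space}.
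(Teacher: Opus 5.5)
Your three steps (isometry via Lemma~\ref{lem:norm_equiv}, surjectivity via Lemma~\ref{lem:well pose}, intertwining via Theorem~\ref{thm:impI}, and their discrete analogues) are exactly the final step of the paper's proof. Your discrete surjectivity and injectivity checks fill in details the paper leaves to the reader.

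The only divergence is the domain issue you flag, and there your first option is the right one. The paper tries to obtain the identity on all of $\mathbb U_0$. For $\mathfrak v\in\mathbb U_0$ it takes $\mathfrak v_0\in\widetilde{\mathbb U}_0$ (zero impedance data on $\partial\Omega$) with the same interface data, and uses $\|\mathfrak v_0\|_{\mathbb U_0}\le\|\mathfrak v\|_{\mathbb U_0}$. It then asserts $\boldsymbol{\mathcal T}\mathfrak v=\boldsymbol{\mathcal T}\mathfrak v_0$ because $\boldsymbol{\mathcal T}$ ``depends only on the interface impedance data''. That is your second option (``the boundary component cannot increase the ratio''), and it fails.

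By \eqref{eqn:local_error} and \eqref{eq:chi-strip}, $\mathcal T_{\ell,j}\bv_j$ is driven by $\imp{\ell}{\bv_j}$ on $\Gamma_{\ell,j}$. This surface lies in the \emph{interior} of $\Omega_j$, where $\bv_j$ also depends on its data on $\partial\Omega_j\cap\partial\Omega$. Concretely, take $\bv_1\in\bm{U}_0(\Omega_1)$ nonzero with impedance data supported in $(\partial\Omega_1\cap\partial\Omega)\setminus\overline{\Omega_2}$. Then $\mathfrak v=(\bv_1,\bm{0},\dots,\bm{0})$ has $\mathfrak v_0=0$, yet $\mathcal T_{2,1}\bv_1\neq\bm{0}$. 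Otherwise $\imp{2}{\bv_1}=\bm{0}$ on $\Gamma_2^-$, so $\bv_1$ solves a homogeneous impedance problem on $\Omega_1\cap\Omega_2$ and vanishes by uniqueness and unique continuation.

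The same restriction is already hidden in Theorem~\ref{thm:impI}. Its proof splits $\imp{\Gamma_j^-}{\bv_{j-1}}$ into two $\mathcal I$-maps, which is valid only when $\bv_{j-1}$ has zero data on $\partial\Omega_{j-1}\cap\partial\Omega$.

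So commit to the first option: take the operator norm on $\widetilde{\mathbb U}_0$. This subspace contains the range of $\boldsymbol{\mathcal T}$, is $\boldsymbol{\mathcal T}$-invariant, and contains every error iterate $\mathfrak e^n$ with $n\ge1$. With that convention your argument is complete as written. On the full space you still get $\|\boldsymbol{\mathcal I}^n\|_{0,\boldsymbol\Gamma}\le\|\boldsymbol{\mathcal T}^n\|_{\mathbb U_0}\le\|\boldsymbol{\mathcal I}^{n-1}\|_{0,\boldsymbol\Gamma}\,\|\boldsymbol{\mathcal T}\|_{\mathbb U_0}$, because $\boldsymbol{\mathcal T}$ maps into $\widetilde{\mathbb U}_0$; this suffices for convergence purposes. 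The discrete identity needs no such convention, since $\bm{U}_{0,h,\ell}$ already builds in zero impedance data on $\partial\Omega_\ell\cap\partial\Omega$. That is why your discrete argument goes through verbatim.
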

\begin{proof}
We prove the first identity;
   the proof of the second is similar, using Definitions~\ref{def:discrete_norm} and \ref{def:discrete_iip}, along with Theorem~\ref{thm:impI_h}.
Let
\begin{equation*}
    \widetilde{\mathbb{U}}_0:=\{\mathfrak{v}\in \mathbb{U}_0\,:\, \iimp\mathfrak{v}=\bm{0} \text{ on } \partial\Omega\}.
\end{equation*}
Since Lemma~\ref{lem:norm_equiv} only applies to elements of $\widetilde{\mathbb{U}}_0$, we first show that the operator norm of $\bm{\cT}^n$ over $\mathbb{U}_0$ can be restricted to $\widetilde{\mathbb{U}}_0$, i.e.,
\begin{equation}\label{eq:restrict_sup}
    \|\bm{\cT}^n\|_{\mathbb{U}_0}
    = \sup_{\mathfrak{v}\in \mathbb{U}_0}\frac{\|\bm{\cT}^n\mathfrak{v}\|_{\mathbb{U}_0}}{\|\mathfrak{v}\|_{\mathbb{U}_0}}
    = \sup_{\mathfrak{v}\in \widetilde{\mathbb{U}}_0}\frac{\|\bm{\cT}^n\mathfrak{v}\|_{\mathbb{U}_0}}{\|\mathfrak{v}\|_{\mathbb{U}_0}}.
\end{equation}
The inequality $\geq$ in~\eqref{eq:restrict_sup} is immediate from $\widetilde{\mathbb{U}}_0\subset \mathbb{U}_0$. For the reverse inequality, given any $\mathfrak{v}\in\mathbb{U}_0$, by Lemma~\ref{lem:well pose} there exists a unique $\mathfrak{v}_0\in \widetilde{\mathbb{U}}_0$ such that $\iimp\mathfrak{v}_0=\iimp\mathfrak{v}$ on $\bm{\Gamma}$, and by~\eqref{eq:normU0},
\begin{equation*}
    \|\mathfrak{v}_0\|_{\mathbb{U}_0}\leq \|\mathfrak{v}\|_{\mathbb{U}_0}.
\end{equation*}
By \eqref{eqn:local_error}, $\bm{\cT}$ depends only on the interface impedance data $\iimp_{\bm{\Gamma}}$ and $\iimp_{\bm{\Gamma}}\mathfrak{v}=\iimp_{\bm{\Gamma}}\mathfrak{v}_0$; we therefore have $\bm{\cT}\mathfrak{v}=\bm{\cT}\mathfrak{v}_0$, and inductively $\bm{\cT}^n\mathfrak{v}=\bm{\cT}^n\mathfrak{v}_0$. Therefore
\begin{equation*}
    \frac{\|\bm{\cT}^n\mathfrak{v}\|_{\mathbb{U}_0}}{\|\mathfrak{v}\|_{\mathbb{U}_0}}
    =\frac{\|\bm{\cT}^n\mathfrak{v}_0\|_{\mathbb{U}_0}}{\|\mathfrak{v}\|_{\mathbb{U}_0}}
    \leq \frac{\|\bm{\cT}^n\mathfrak{v}_0\|_{\mathbb{U}_0}}{\|\mathfrak{v}_0\|_{\mathbb{U}_0}},
\end{equation*}
which gives the inequality $\leq$ in~\eqref{eq:restrict_sup}.
    Applying the norm equivalence from Lemma~\ref{lem:norm_equiv} to both the numerator and denominator, 
    and then using Theorem~\ref{thm:impI},
    we have
    \begin{equation*}
        \|\bm{\cT}^n\|_{\mathbb{U}_0} = \sup_{\mathfrak{v}\in \widetilde{\mathbb{U}}_0} \frac{\|\iimp(\bm{\cT}^n\mathfrak{v})\|_{0,\bm{\Gamma}}}{\|\iimp \mathfrak{v}\|_{0,\bm{\Gamma}}} = \sup_{\mathfrak{v}\in \widetilde{\mathbb{U}}_0} \frac{\|\bm{\cI}^n (\iimp \mathfrak{v})\|_{0,\bm{\Gamma}}}{\|\iimp \mathfrak{v}\|_{0,\bm{\Gamma}}}.
    \end{equation*}
 Finally, since the impedance map $\iimp_{\bm{\Gamma}}: \widetilde{\mathbb{U}}_0 \to \mathbb{L}_T(\bm{\Gamma})$
 (where $\mathbb{L}_T(\bm{\Gamma})$ is defined by \eqref{eq:tensor_L})
 is surjective by Lemma~\ref{lem:well pose}, 
    \begin{equation*}
        \|\bm{\cT}^n\|_{\mathbb{U}_0} = \sup_{\mathbf{g} \in \mathbb{L}_T(\bm{\Gamma})} \frac{\|\bm{\cI}^n \mathbf{g}\|_{0,\bm{\Gamma}}}{\|\mathbf{g}\|_{0,\bm{\Gamma}}} = \|\bm{\cI}^n\|_{0,\bm{\Gamma}}.
    \end{equation*}
\end{proof}

\begin{assumption}[Convergence of the discrete impedance-to-impedance map]
\label{ass:Ih-I}
As \(h\to 0\), the discrete interface map converges to the continuous one in
operator norm:~i.e.,
\begin{equation*}
        \|\boldsymbol{\mathcal I}^h-\boldsymbol{\mathcal I}\|_{0,\boldsymbol{\Gamma}}
        \to 0 \quad \text{ as } h\to 0.
\end{equation*}
\end{assumption}

\begin{remark}[Discussion of Assumption \ref{ass:Ih-I}]
\label{rem:localFEM}
In the Helmholtz case, \cite[Theorem 5.5]{gong2023convergence} proves Assumption \ref{ass:Ih-I} using a local analysis
(away from the impedance boundary)
of the FEM error for the Helmholtz interior impedance problem in a rectangular cuboid (corresponding to one subdomain in a strip decomposition). Crucial ingredients in this analysis are (i) $H^2$ regularity of the solution of the Helmholtz equation in a convex polyhedron (used in \cite[Lemma 5.8]{gong2023convergence}), and (ii) an adaption of the ``elliptic-projection" duality argument to work in a weighted norm \cite[Theorem 5.7]{gong2023convergence}.

The obstacles to establishing an analogous result in the Maxwell setting here are (i) the reduced regularity -- the Maxwell solution is only $\bH^1$ in a convex polyhedron, with this only currently proved for homogeneous impedance data \cite{NiTo:19} (see the discussion around Assumption \ref{ass:regularity}) and (ii) the fact that the Maxwell analogues of the ``elliptic-projection" duality arguments are both more involved, and the subject of recent research \cite{LuWu:25, CGS1, Lu2026preasymptotic}.
\end{remark}

\begin{theorem}
Suppose that 
$\{\Omega_\ell\}_{\ell=1}^N$ is a strip decomposition with $N\geq 2$ and Assumption \ref{ass:Ih-I} holds.
Then,
    for any fixed integer $n\geq 1$, as $h\to 0$,
    \begin{equation*}
\|\bm{\cT}_h^n\|_{\mathbb{U}_{0,h}}
    \to
    \|\bm{\cT}^n\|_{\mathbb{U}_0}
    \end{equation*}
\end{theorem}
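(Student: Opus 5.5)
By Theorem \ref{thm:norm_equiv_hc}, it suffices to show that $\|(\bm{\cI}^h)^n\|_{0,\bm{\Gamma}}\to \|\bm{\cI}^n\|_{0,\bm{\Gamma}}$ as $h\to0$. The plan is therefore to move the statement entirely to the interface level and then use a telescoping estimate for powers of nearby operators.

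One technical point must be settled first. The discrete map $\bm{\cI}^h$ acts on $\mathbb{U}_{h,T}(\bm{\Gamma})$, whereas $\bm{\cI}$ acts on $\mathbb{L}_T(\bm{\Gamma})$. Assumption \ref{ass:Ih-I} only makes sense if both are regarded as operators on a common space. I would interpret it as follows: extend $\bm{\cI}^h$ to all of $\mathbb{L}_T(\bm{\Gamma})$ by precomposing with the $L^2$-orthogonal projection $P_h$ onto $\mathbb{U}_{h,T}(\bm{\Gamma})$, and read the assumption as $\|\bm{\cI}^hP_h-\bm{\cI}\|_{0,\bm{\Gamma}}\to0$.

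Under this reading the extension does not change the operator norm of powers. Since $\bm{\cI}^h$ maps into $\mathbb{U}_{h,T}(\bm{\Gamma})$, we have $(\bm{\cI}^hP_h)^n=(\bm{\cI}^h)^nP_h$. Moreover $P_h$ is an orthogonal projection whose range is exactly the domain of $\bm{\cI}^h$, so
\[
\|(\bm{\cI}^hP_h)^n\|_{0,\bm{\Gamma}}=\|(\bm{\cI}^h)^n\|_{0,\bm{\Gamma}}.
\]
This gives $\le$ because $\|P_h\|\le1$, and $\ge$ by testing on $\mathbb{U}_{h,T}(\bm{\Gamma})$. From here on I write $\bm{\cI}^h$ for the extended operator.

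The core step is the telescoping identity
\[
(\bm{\cI}^h)^n-\bm{\cI}^n=\sum_{m=0}^{n-1}(\bm{\cI}^h)^{m}(\bm{\cI}^h-\bm{\cI})\bm{\cI}^{n-1-m}.
\]
Taking norms gives
\[
\|(\bm{\cI}^h)^n-\bm{\cI}^n\|\le \|\bm{\cI}^h-\bm{\cI}\|\sum_{m=0}^{n-1}\|\bm{\cI}^h\|^m\|\bm{\cI}\|^{n-1-m}.
\]
The factor $\|\bm{\cI}\|$ is finite by Lemma \ref{lem:bdd_I}, since $\bm{\cI}$ is a finite block matrix of bounded impedance-to-impedance maps. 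By Assumption \ref{ass:Ih-I}, for $h$ small enough we have $\|\bm{\cI}^h\|\le\|\bm{\cI}\|+1$, so the sum is bounded uniformly in $h$ for fixed $n$. The right-hand side therefore tends to $0$ as $h\to0$.

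The reverse triangle inequality then gives
\[
\bigl|\,\|(\bm{\cI}^h)^n\|-\|\bm{\cI}^n\|\,\bigr|\le\|(\bm{\cI}^h)^n-\bm{\cI}^n\|\to0.
\]
Combining this with the two identities of Theorem \ref{thm:norm_equiv_hc} yields $\|\bm{\cT}_h^n\|_{\mathbb{U}_{0,h}}\to\|\bm{\cT}^n\|_{\mathbb{U}_0}$.

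The main obstacle is the identification step, not the telescoping estimate, which is routine. One must make sure that the second identity of Theorem \ref{thm:norm_equiv_hc} really equates $\|\bm{\cT}_h^n\|$ with the norm of $(\bm{\cI}^h)^n$ on $\mathbb{U}_{h,T}(\bm{\Gamma})$. This relies on $\iimp^h_{\bm{\Gamma}}$ mapping $\mathbb{U}_{0,h}$ onto $\mathbb{U}_{h,T}(\bm{\Gamma})$, which follows from solvability of \eqref{eq:aOmegaj} under Assumption \ref{assump:local-well-posedness}. One must also check that the way the discrete and continuous maps are compared in Assumption \ref{ass:Ih-I} matches the projection-extension convention above.
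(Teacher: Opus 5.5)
Your proposal is correct and follows the paper's proof. Both reduce to the interface level via Theorem~\ref{thm:norm_equiv_hc}, apply the reverse triangle inequality, and bound $\|(\bm{\cI}^h)^n-\bm{\cI}^n\|_{0,\bm{\Gamma}}\le nC^{n-1}\|\bm{\cI}^h-\bm{\cI}\|_{0,\bm{\Gamma}}$ (your telescoping sum is exactly the induction the paper invokes) before concluding with Assumption~\ref{ass:Ih-I}. Your projection-extension reading of Assumption~\ref{ass:Ih-I} and your explicit uniform bound $\|\bm{\cI}^h\|\le\|\bm{\cI}\|+1$ for small $h$ make precise two points the paper leaves implicit, without changing the argument.
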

\begin{proof}
    By Theorem \ref{thm:norm_equiv_hc}, 
    \begin{equation*}
        \begin{aligned}
\left|\|\bm{\cT}_h^n\|_{\mathbb{U}_{0,h}}- \|\bm{\cT}^n\|_{\mathbb{U}_0}\right|=\left|\|(\bm{\cI}^h)^n\|_{0,\bm{\Gamma}}- \|\bm{\cI}^n\|_{0,\bm{\Gamma}}\right|\leq \left\|(\bm{\cI}^h)^n-\bm{\cI}^n\right\|_{0,\bm{\Gamma}}.
        \end{aligned}
    \end{equation*}
    Arguing by induction (exactly as in the proof of \cite[Theorem 4.17]{gong2023convergence}), one
can show that 
    \begin{equation*}
        \left\|(\bm{\cI}^h)^n-\bm{\cI}^n\right\|_{0,\bm{\Gamma}}\leq nC^{n-1}\left\|\bm{\cI}^h-\bm{\cI}\right\|_{0,\bm{\Gamma}},
    \end{equation*}
    where the constant $C$ depends on the constant in 
    \eqref{eq:imp2imp_bound}
    (i.e., the boundedness of impedance-to-impedance maps).
        The result then follows from Assumption \ref{ass:Ih-I}.
\end{proof}

\begin{remark}[Comparison with the Helmholtz results of \cite{gong2022convergence, gong2023convergence, lafontaine2023sharp}]
\label{rem:compare}
The Maxwell results in this paper are not quite as extensive as the Helmholtz results in \cite{gong2022convergence, gong2023convergence, lafontaine2023sharp} in the following three ways:

(i) The wellposedness result of Theorem \ref{thm:wellpose2} requires the regularity assumption Assumption \ref{ass:regularity}, whereas the Helmholtz analogue of Theorem \ref{thm:wellpose2} (\cite[Theorem 2.12]{gong2022convergence}) is proved for general Lipschitz subdomains. 

(ii) We have assumed that the discrete impedance-to-impedance maps converge to their continuous counterparts (Assumption \ref{ass:Ih-I}) while the Helmholtz analgoue of this result is proved in \cite[Theorem 5.5]{gong2023convergence}.

(iii) We have not attempted to bound the impedance-to-impedance maps in the high-frequency limit at the continuous level as done for the Helmholtz case in \cite{lafontaine2023sharp}.

Regarding (i) and (ii):~the obstacles to establishing these results are because of the well-known fact that the Maxwell solution is less regular that the analogous Laplace/Helmholtz solution (see the discussion 
below Conjecture \ref{con:regularity} and 
in Remark \ref{rem:localFEM}). 

Regarding (iii):~the bounds in \cite{lafontaine2023sharp} use so-called \emph{semiclassical defect measures} (see, e.g., \cite[Chapter 5]{zworski2012semiclassical}, \cite{miller2000refraction}), and, to our knowledge, the theory of these measures has not yet been established for the time-harmonic Maxwell equations. 
    \end{remark}
    



\section{Numerical tests}\label{sec:numerical}
    The numerical experiments in this section are organized around three stages: first
we observe the convergence of the RAS-imp preconditioner, then we identify its convergence mechanism (via the impedance-to-impedance maps), and finally we report iteration counts of RAS-imp across a broader range of 
parameter regimes and geometries.
\\
\indent
More precisely:~we begin in
\S\ref{sec:num2} by plotting the residual history of the RAS-imp method applied
to a representative problem with a nonzero source term on strip decompositions,
which illustrates the basic convergence behavior of the iteration. We then turn
from residuals to the error propagation operator itself:~in \S\ref{sec:num3}, we
show that suitable powers of this operator are contractive on strip
decompositions, and in \S\ref{sec:num4} we compute the norms of the underlying
impedance-to-impedance maps, which—together with Theorem~\ref{thm:TLU}—account
for the power contractivity observed in \S\ref{sec:num3}. Moving beyond the
strip decompositions covered by our theoretical analysis, \S\ref{sec:num5}
considers checkerboard decompositions and finds the same power-contractivity
behavior. The section concludes with a broader study in \S\ref{sec:num6}, where
we report iteration counts for RAS-imp and the corresponding
right-preconditioned GMRES method under varying decompositions, overlaps,
wavenumbers, and PDE coefficients.

\subsection{Set up}\label{sec:num1}

We consider the following two-dimensional time-harmonic Maxwell's equations on a domain $\Omega$, coupled with impedance boundary conditions:
\begin{equation}\label{eq:2D_maxwell}
\begin{aligned}
\overrightarrow{\curl}(\mu^{-1}\rot\bu)-k^2\epsilon\bu&=\bm{f}\quad\mbox{in }\Omega,\\
\bm{\tau}(\mu^{-1}\rot\bu-ik\lambda\bu\cdot\bm{\tau})&=
\bg_T\quad \mbox{on }\partial\Omega,
\end{aligned}
\end{equation}
where $\overrightarrow{\curl}=\left(\dfrac{\partial}{\partial y},-\dfrac{\partial}{\partial x}\right)$, $\rot\bu=\dfrac{\partial\bu_y}{\partial x}-\dfrac{\partial\bu_x}{\partial y}$, and $\bm{\tau}=(-\bn_y,\bn_x)$ denotes the unit tangent vector.
The corresponding sesquilinear form is
\begin{equation}
a_\Omega(\bu,\bv):=\int_{\Omega}\left(\mu^{-1}\rot\bu\rot\bar{\bv}-k^2\epsilon\bu\cdot\bar{\bv}\right)\mathrm{d}\bx - i k\int_{\partial\Omega}\lambda(\bu\cdot\bm{\tau})(\bar{\bv}\cdot\bm{\tau})\mathrm{d}s,
\end{equation}
and the discrete variational problem reads: 
\begin{equation}
a_\Omega(\bu_h,\bv_h)=\int_\Omega \bm{f}\cdot\bar{\bv}_h\,\mathrm{d}\bx + \int_{\partial\Omega}\bg_T\cdot\bar{\bv}_{h}\,\mathrm{d}s.
\end{equation}
Both homogeneous media ($\epsilon=1$, $\mu=1$, $\lambda=1$) and heterogeneous media with spatially varying permittivity are considered.

All experiments are carried out on two-dimensional rectangular domains, discretized with the lowest-order Nédélec finite elements on a mesh of size $h \propto k^{-3/2}$ chosen to suppress the pollution effect \cite{Lu2026preasymptotic}.
The domain $\Omega = [0,L_x]\times[0,L_y]$ is partitioned into $N$ non-overlapping rectangular subdomains, each then enlarged by appending all neighboring elements within a distance $\delta$. The overlap $\delta$ is either prescribed as a fixed value or set to $\delta = \eta H$, where $H = L_x/N$ denotes the subdomain size and $\eta$ the overlap ratio.


\subsection{Residual histories of RAS-imp on strip decompositions}\label{sec:num2}

We consider the Maxwell equations on $\Omega=(0,\tfrac{2}{3}N)\times(0,1)$ 
with homogeneous impedance boundary data. 
We consider the oscillatory source term 
defined by 
\[
    x_0=\left(\frac{N}{3},\frac12\right), 
    \qquad 
    r=|x-x_0|,
\]
and then
\[
    \bm{f}=(0,J_0(kr)),
\]
where \(J_0\) is the Bessel function of the first kind of order zero. 
We apply the preconditioned Richardson iteration~\eqref{eq:Richardson} with $B_h^{-1}$ the RAS-imp preconditioner, terminated when $\|\br\|_2/\|\bm{b}\|_2 \le 10^{-6}$, with a maximum of $500$ iterations.

Figure~\ref{fig:k_N} shows the relative residual histories for varying wavenumbers $k$ and numbers of subdomains $N$ with fixed overlap size $\delta = H/4$. As $k$ varies from $10$ to $50$, the number of iterations needed for convergence remains essentially the same, indicating robustness with respect to $k$. Figure~\ref{fig:delta_N} shows the corresponding results for fixed $k = 30$ with varying $\delta$; the overlap size has only a minor effect. In contrast, the iteration count grows roughly linearly with $N$ in both figures. A common feature across all experiments is a distinctive stepwise decay of the residuals, which is closely related to the power contractivity of the error propagation operator analyzed in the next subsection.

\begin{figure}[h!]
    \centering
    \includegraphics[width=1\textwidth]{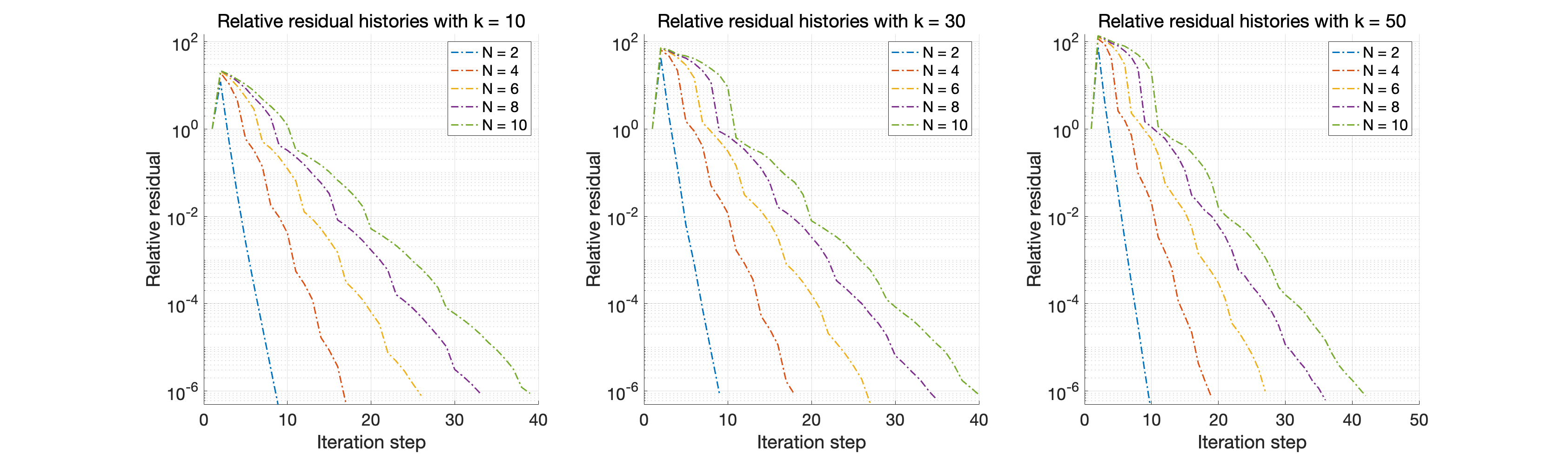}
   \caption{Relative residual histories of the preconditioned Richardson iteration for a strip domain decomposition with $\delta = H/4$, varying $N$ and $k$ on the domain $(0,\frac23N)\times (0,1)$.}
    \label{fig:k_N}
\end{figure}

\begin{figure}[h!]
    \centering
    \includegraphics[width=0.7\textwidth]{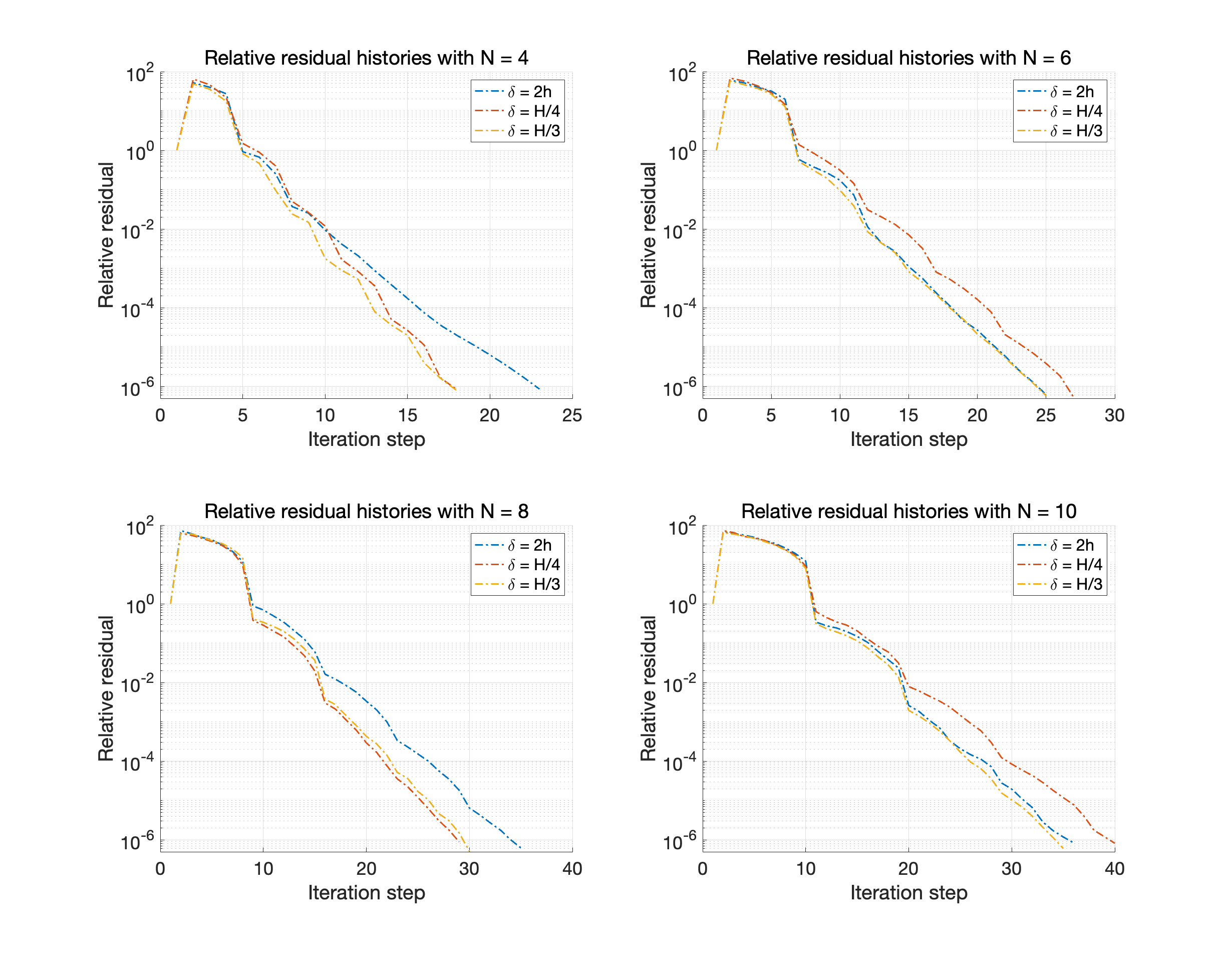}
   \caption{Relative residual histories of the preconditioned Richardson iteration for a strip domain decomposition with $k=30$, varying $N$ and $\delta$ on the domain $(0,\frac23N)\times (0,1)$.}
    \label{fig:delta_N}
\end{figure}

\subsection{Power contractivity for strip decompositions}\label{sec:num3}
In this subsection, we investigate the power contractivity of RAS-imp
 on a strip domain decomposition.
For the error equation \eqref{eqn:discrete-error-prop} arising in the parallel Schwarz method, convergence is typically established by verifying that the spectral radius $\rho(\bm{\mathcal{T}}_h)$ of the iteration matrix is strictly less than $1$. Direct computation of $\rho(\bm{\mathcal{T}}_h)$, however, is prohibitively expensive. We therefore reduce this computation to a spectral problem for a much smaller matrix $\bE$.
Let
\begin{equation}\label{eq:boldRs}
\widetilde{\mathbf{R}}_h^\mathrm{T}:=[\widetilde{\mathcal{R}}^\mathrm{T}_{h,1}, \dotso, \widetilde{\mathcal{R}}^\mathrm{T}_{h,N}], \qquad
\mathcal{\mathbf{R}}_h:=[\mathcal{R}_{h,1},\dotso, \mathcal{R}_{h,N}]^\mathrm{T}
\end{equation}
and 
 $$\mathbf{Q}_h:=[A_{h,1}^{-1}\mathcal{R}_{h,1}A_h,\dotso, A_{h,N}^{-1}\mathcal{R}_{h,N}A_h]^\mathrm{T}.$$
The definition of the RAS-imp Richardson iteration \eqref{eq:Richardson}
implies that the global error $\be_h^n:=\bu_h-\bu_h^n$ satisfies
\begin{equation*}
    \be^{n}_h=\bE\be^{n-1}_h,~~~ \bE:=I-B_h^{-1}A_h.
\end{equation*}
where 
$B_h^{-1}
=
\sum_{\ell=1}^N
\widetilde{\mathcal R}_{h,\ell}^{\mathrm T}
A_{h,\ell}^{-1}
\mathcal R_{h,\ell}.$
By the definition of $B_h^{-1}$ and the fact that $\sum_\ell\widetilde{\mathcal{R}}_{h,\ell}^T\mathcal{R}_{h,\ell}=I$
\eqref{e:discretePOU},
\begin{equation}\label{eq:E}
\bE=\sum_\ell \widetilde{\mathcal{R}}_{h,\ell}^T(\mathcal{R}_{h,\ell}-A_{h,\ell}^{-1}\mathcal{R}_{h,\ell} A_h)=\widetilde{\mathbf{R}}_h^\mathrm{T}(\mathbf{R}_h-\mathcal{\mathbf{Q}}_h).
\end{equation}
In addition, by Definition \ref{def:error_prop} of $\bm{\mathcal{T}}_h$ and the definition of $b_{h,\ell}$ in \eqref{eqn:def-b}, 
\begin{equation*}
    \begin{aligned}
\cT_{h,\ell,j}\be^n_{h,j}=&\left(\widetilde{\mathcal{R}}_{h,\ell}^T\be^n_{h,j}\right)\Big|_{\Omega_\ell}-A_{h,\ell}^{-1}\mathcal{R}_{h,\ell} A_h\widetilde{\mathcal{R}}_{h,j}^T\be^n_{h,j}
    \end{aligned}
\end{equation*}
Thus,
\begin{equation*}
    \be^{n+1}_{h,\ell}=\sum_{j=1}^N\cT_{h,\ell,j}\be^n_{h,j}=\left(\sum_{j=1}^N\widetilde{\mathcal{R}}_{h,\ell}^T\be^n_{h,j}\right)\Big|_{\Omega_\ell}-A_{h,\ell}^{-1}\mathcal{R}_{h,\ell} A_h\sum_{j=1}^N\widetilde{\mathcal{R}}_{h,j}^T\be^n_{h,j}.
\end{equation*}
Then by the definitions of 
$\mathbf{R}_h$ and $\widetilde{\mathbf{R}}_h$
\eqref{eq:boldRs},
the error propagation matrix can be written as
 \begin{equation}\label{eq:T}
	 \boldsymbol{\cT}_h = (\mathbf{R}_h -\mathcal{\mathbf{Q}}_h) \widetilde{\mathbf{R}}_h^\mathrm{T}.
 \end{equation}

By \eqref{eq:E} and \eqref{eq:T}, $\bE$ and $\bm{\mathcal{T}}_h$ are products of the same two factors in opposite orders, and therefore share the same non-zero eigenvalues; in particular, $\rho(\bm{\mathcal{T}}_h)=\rho(\bE)$. Since $\bE$ is the smaller of the two, we work with $\bE$ in what follows.
To quantify the contraction in a $k$-explicit way, we equip $\mathbb{C}^M$, where $M$ is the dimension of the finite element space, 
with the norm $\|\cdot\|_{D_k}$ induced from the norm \begin{equation}
    \label{eq:Ximp2}
  \|\bu\|_{\iimp, k, \Omega}^2 :=\|\curl\bu\|_{0,\Omega}^2 + k^2\|\bu\|_{0,\Omega}^2 + k\|\bu_T\|_{0,\partial\Omega}^2,
\end{equation}
on $X_{\rm imp}(\Omega)$. Note that the well-posedness results in \S\ref{sec:wp} were not explicit in $k$, so the norm \eqref{eq:Ximp} was left unweighted; here we are interested in the $k$-dependence of the FEM error, and \eqref{eq:Ximp2} weights the terms with $k$ in the natural way. We then compute
\begin{equation}
\|\bE^s\| := \max_{0\neq \bv\in \mathbb{C}^M}
\frac{\|\bE^s\bv\|_{D_k}}{\|\bv\|_{D_k}}, \quad s \geq 1.
\end{equation}

Table~\ref{tab:1k_norm_E} reports $\|\bE^s\|$ for varying $k$ and $N$ in the homogeneous case. In every configuration, $\|\bE^s\|$ decreases gradually with $s$ and then drops  to a value well below $1$ at $s\sim N$. 
This matches the stepwise decay observed in Figures~\ref{fig:k_N} and~\ref{fig:delta_N}: the abrupt drops in the residual curves coincide with the iterations $s\sim N$ at which $\|\bE^s\|$ contracts. 
In particular, $\rho(\bE)\le\|\bE^N\|^{1/N}<1$, so the iteration converges asymptotically.


\begin{table}[h!]
\small
    \centering
    \caption{Norms of powers of $\bm{E}$ with $\delta = H/4$ and $h = k^{-3/2}$ on $\left(0,\frac{2}{3}N\right) \times (0,1)$.}
    \label{tab:1k_norm_E}
\begin{tabular}{ccccccccc}
    \hline\hline
    \multirow{2}{*}{$k\backslash N$} & \multicolumn{4}{c}{4} & \multicolumn{4}{c}{8}                 \\\cmidrule(lr){2-5} \cmidrule(lr){6-9} 
         & $\|\bE\|$ & $\|\bE^{N-1}\|$ & $\|\bE^{N}\|$  & $\|\bE^{N+1}\|$ & $\|\bE\|$ & $\|\bE^{N-1}\|$ & $\|\bE^{N}\|$ & $\|\bE^{N+1}\|$ \\\hline
 10 & 6.52e+00 & 3.80e+00 & 2.27e-01 & 1.46e-01 & 6.81e+00 & 2.19e+00 & 1.37e-01 & 1.12e-01\\
 15 & 7.37e+00 & 5.91e+00 & 2.25e-01 & 1.11e-01 & 7.57e+00 & 4.63e+00 & 1.26e-01 & 1.05e-01\\
 20 & 9.88e+00 & 7.69e+00 & 3.74e-01 & 1.86e-01 & 1.03e+01 & 6.60e+00 & 1.71e-01 & 1.10e-01\\
 25 & 1.04e+01 & 9.24e+00 & 5.81e-01 & 2.82e-01 & 1.06e+01 & 8.32e+00 & 2.33e-01 & 1.57e-01\\
 30 & 1.26e+01 & 1.06e+01 & 5.27e-01 & 3.20e-01 & 1.30e+01 & 9.76e+00 & 3.33e-01 & 2.52e-01\\
\hline\hline
\end{tabular}
\end{table}

We next extend the study to heterogeneous media, replacing the constant permittivity by
\begin{equation}\label{eq:epsilon}
\epsilon =
\begin{cases}
1 + \dfrac{1}{2}\exp\!\left(-\dfrac{(x-\frac{L_x}{2})^2 + (y-\frac{L_y}{2})^2}{20}\right),
& \text{if } (x-\tfrac{L_x}{2})^2 + (y-\tfrac{L_y}{2})^2 < \tfrac{1}{4}, \\
1,
& \text{otherwise.}
\end{cases}
\end{equation}
Table~\ref{tab:i_1k_norm_E} reports $\|\bE^s\|$ in this variable-coefficient case. The contraction pattern matches the homogeneous one but is shifted slightly later: $\|\bE^N\|$ now exceeds $1$ at larger $k$, yet $\|\bE^{N+1}\|$ remains well below $1$ in every configuration, so power contractivity continues to hold once $s$ is sufficiently large.
\begin{table}[h!]
\small
    \centering
   \caption{Norms of powers of $\bE$ for the variable-coefficient case \eqref{eq:epsilon}. Results are shown for $\delta = H/4$ with mesh size $h = k^{-3/2}$ on $(0,\frac{2}{3}N) \times (0,1)$.}
\label{tab:i_1k_norm_E}
\begin{tabular}{ccccccccc}
    \hline\hline
    \multirow{2}{*}{$k\backslash N$} & \multicolumn{4}{c}{4} & \multicolumn{4}{c}{8}                 \\\cmidrule(lr){2-5} \cmidrule(lr){6-9} 
         & $\|\bE\|$& $\|\bE^{N-1}\|$ & $\|\bE^{N}\|$ & $\|\bE^{N+1}\|$ & $\|\bE\|$& $\|\bE^{N-1}\|$ & $\|\bE^{N}\|$ & $\|\bE^{N+1}\|$  \\\hline
 10 & 6.26e+00 & 4.26e+00 & 5.39e-01 & 1.99e-01 & 6.66e+00 & 2.45e+00 & 3.19e-01 & 1.18e-01\\
 15 & 7.95e+00 & 6.27e+00 & 8.76e-01 & 1.85e-01 & 7.96e+00 & 4.92e+00 & 7.41e-01 & 1.33e-01\\
 20 & 9.81e+00 & 8.35e+00 & 1.56e+00 & 4.20e-01 & 1.02e+01 & 7.26e+00 & 1.13e+00 & 2.46e-01\\
 25 & 1.12e+01 & 9.35e+00 & 1.30e+00 & 5.32e-01 & 1.13e+01 & 8.89e+00 & 9.13e-01 & 3.08e-01\\
 30 & 1.28e+01 & 1.08e+01 & 2.06e+00 & 7.88e-01 & 1.30e+01 & 1.04e+01 & 1.43e+00 & 4.04e-01\\
\hline\hline
\end{tabular}
\end{table}

\subsection{Boundedness of impedance-to-impedance maps ($\gamma$ and $\rho$)}\label{sec:num4}

\begin{figure}[h!]
\centering
\includegraphics[width=0.4\textwidth]{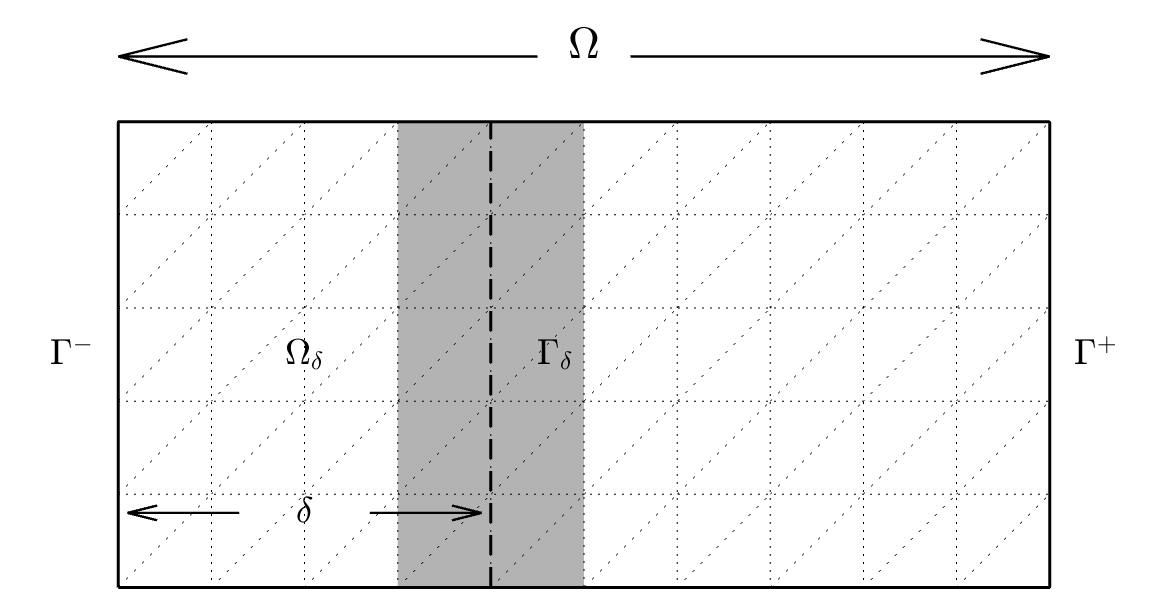}
\caption{Canonical domain for the two-dimensional Maxwell equation.}\label{fig:2D}
\end{figure}

In this subsection, we consider the Maxwell equation on the canonical domain $[0,L]\times [0,1]$ depicted in Figure~\ref{fig:2D}, with a nonzero boundary condition imposed only on $\Gamma^-$. The discrete impedance data $\cI^h\bg_T$ is then defined as in Definition \ref{def:discrete_iip}.

To explain the power contractivity observed in \S\ref{sec:num3}, we examine the boundedness of the impedance-to-impedance maps. Specifically, we compute the operator norms of the left-to-right and left-to-left maps $\cI_{-\to+}$ and $\cI_{-\to-}$ on the canonical domain of Figure \ref{fig:2D}:
\begin{equation}\label{e:rhogamma7}
    \rho(k,\delta,L) = \sup_{\bg_T\in \bL^2_T(\Gamma^-)}\frac{\|\cI_{-\to+}\bg_T\|_{\bL^2_T(\Gamma_\delta)}}{\|\bg_T\|_{\bL^2_T(\Gamma^-)}},\quad \gamma(k,\delta,L) = \sup_{\bg_T\in \bL^2_T(\Gamma^-)}\frac{\|\cI_{-\to-}\bg_T\|_{\bL^2_T(\Gamma_\delta)}}{\|\bg_T\|_{\bL^2_T(\Gamma^-)}}.
\end{equation}
We have deliberately used the notation $\rho$ and $\gamma$ here 
-- i.e., the same notation as in \eqref{eqn:rho} and \eqref{eqn:gamma} -- because, for appropriate values of  $\delta$ and $L$ (depending on $\Omega_\ell$), the maxima over $\ell$ of $\rho$ and $\gamma$ defined in \eqref{e:rhogamma7} bound the quantities in \eqref{eqn:rho} and \eqref{eqn:gamma}, respectively (see \cite[Equations 4.34 and 4.35]{gong2022convergence}).

We first verify in Table~\ref{table:rg_refined} that the discrete approximations of $\rho$ and $\gamma$ stabilize under mesh refinement, confirming that the computed values reliably represent the underlying continuous operator norms. Table~\ref{table:rg_large_k} then probes how $\rho$ and $\gamma$ depend on the subdomain length $L$ and the wavenumber $k$: $\rho$ stays well below one and $\gamma$ remains close to one for all tested $k$, and both decrease as $L$ grows. Combined with Theorem~\ref{thm:TLU} and Corollary \ref{cor:key}, this boundedness yields contractivity of suitable powers of the Schwarz error propagation operator, in turn explaining the stepwise decay of the residual histories.

\begin{table}[h]
    \centering
    \caption{Values of impedance to impedance operator $\rho$ and $\gamma$ for $L=1$ with refined mesh $h = 20^{-3/2}/2^c$.}\label{table:rg_refined}
\begin{tabular}{ccccccc}
    \hline\hline
                & \multicolumn{3}{c}{$\rho(k,\frac{L}{4},L)$} & \multicolumn{3}{c}{$\gamma(k,\frac{3L}{4},L)$} \\ \cmidrule(lr){2-4}\cmidrule(lr){5-7}
$k\backslash c$ & 0     & 1     & 2    & 0     & 1     & 2    \\\hline
5               &0.226361       &0.226214       &0.223826      &0.930556       &0.930599       &0.930327      \\
10              &0.217922       &0.217672       &0.216383      &0.996338       &0.996357       &0.996379      \\
20              &0.239081       &0.238319       &0.237041      &1.000817       &1.000866       &1.000871     \\ \hline\hline
\end{tabular}
\end{table}

\begin{table}[h!]
\centering
\caption{Values of the impedance-to-impedance operators $\rho$ and $\gamma$ for various wavenumbers $k$ and domain lengths $L$, with mesh size $h = 50^{-3/2}$.}
\label{table:rg_large_k}
\begin{tabular}{ccccc}
\hline\hline
 & \multicolumn{2}{c}{$\rho(k,\tfrac{L}{3},L)$} & \multicolumn{2}{c}{$\rho(k,\tfrac{L}{4},L)$} \\
 \cmidrule(lr){2-3}\cmidrule(lr){4-5}
$k\backslash L$ & 1 & 2 & 1 & 2 \\ \hline
20 & 0.192178 & 0.100913 & 0.237055 & 0.133153 \\
30 & 0.217799 & 0.107541 & 0.257803 & 0.150551 \\
40 & 0.237843 & 0.118756 & 0.279766 & 0.165568 \\
50 & 0.252230 & 0.132442 & 0.298163 & 0.178242 \\
\hline\hline
 & \multicolumn{2}{c}{$\gamma(k,\tfrac{2L}{3},L)$} & \multicolumn{2}{c}{$\gamma(k,\tfrac{3L}{4},L)$} \\
 \cmidrule(lr){2-3}\cmidrule(lr){4-5}
$k\backslash L$ & 1 & 2 & 1 & 2 \\ \hline
20 & 1.000117 & 0.999459 & 1.000861 & 0.999967 \\
30 & 1.001804 & 0.999961 & 1.000396 & 1.000144 \\
40 & 1.002934 & 1.000049 & 1.002630 & 1.000054 \\
50 & 1.001302 & 1.000084 & 1.000783 & 1.000244 \\
\hline\hline
\end{tabular}
\end{table}

\subsection{Power contractivity for checkerboard decompositions}
\label{sec:num5}

\begin{figure}[h!]
    \centering
    \includegraphics[width=0.3\textwidth]{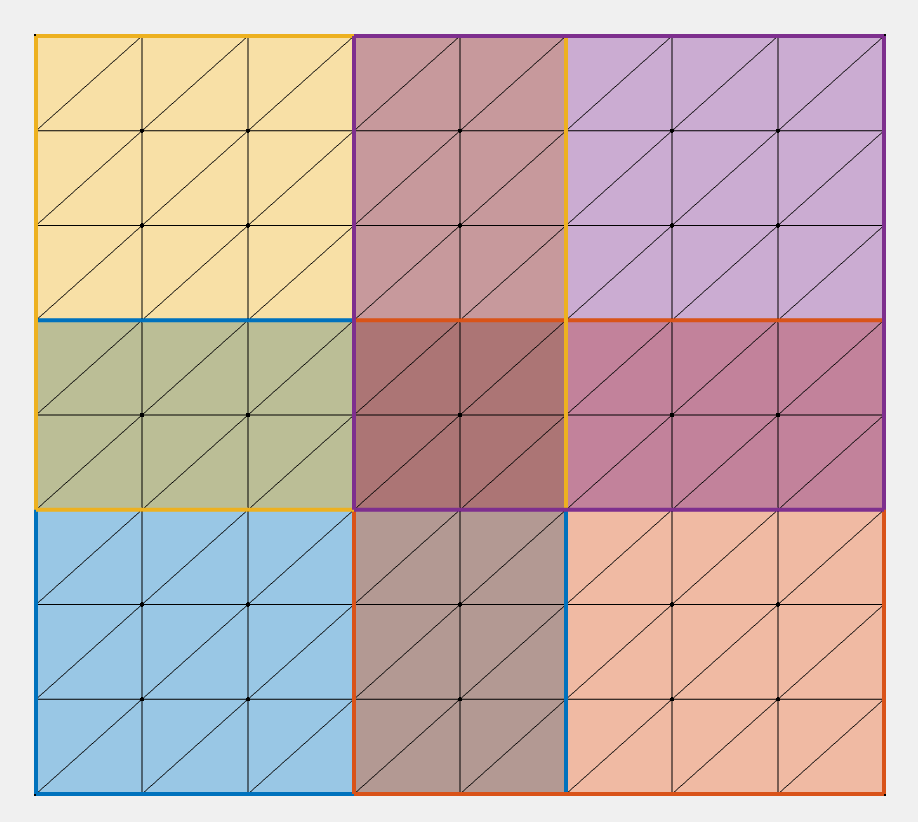} 
    \caption{Checkerboard domain decomposition into overlapping subdomains.}
    \label{fig:checkerboard}
\end{figure}
In this subsection, we consider checkerboard decompositions of rectangular domains, illustrated in Figure~\ref{fig:checkerboard}: an $m\times m$ partition with $N=m^2$ subdomains. Compared with the strip layout, each interior subdomain has more neighbors~\cite[\S6.3]{gong2022convergence}. 
The theoretical results of 
\cite{galkowski2024convergence} for one-level Schwarz methods with PML transmission conditions applied to the Helmholtz equation indicate that one expects the contractive regime -- where 
$\|\bE^s\|<1$ -- to occur roughly when 
$s\sim 2\sqrt{N}$ 
(with $2\sqrt{N}$ equal to the number of subdomains in the $x_1$ directions plus the number of subdomains in the $x_2$ direction. 
Tables~\ref{tab:checkerboard_1k_norm_E} and~\ref{tab:checkerboard_1k_norm_E_var} therefore report $\|\bE^s\|$ for $s$ near $2\sqrt{N}$, in the homogeneous and variable-coefficient (with $\epsilon$ given by~\eqref{eq:epsilon}) cases respectively. In the homogeneous case, $\|\bE^s\|<1$ is reached by $s=2\sqrt{N}+2$ in essentially every configuration, the only outlier within the displayed range being $8\times 8$ at $k=30$. In the variable-coefficient case the threshold is shifted slightly later, but a few more powers again bring $\|\bE^s\|<1$; i.e., power contractivity persists with a higher threshold. 

\begin{table}[h!]
\small
\centering
\caption{Norms of powers of $\bE$ with $\delta = H/4$ and $h = k^{-3/2}$ on 
$\left(0,\frac{2}{3}N^{1/2}\right)\times
\left(0,\frac{2}{3}N^{1/2}\right)$.}
\label{tab:checkerboard_1k_norm_E}
\begin{tabular}{cccccccc}
\hline\hline
\rule{0pt}{3.2ex}%
N
& $k$ 
& $\|\bE\|$ 
& $\|\bE^{2\sqrt{N}-1}\|$ 
& $\|\bE^{2\sqrt{N}}\|$ 
& $\|\bE^{2\sqrt{N}+1}\|$ 
& $\|\bE^{2\sqrt{N}+2}\|$ 
& $\|\bE^{2\sqrt{N}+3}\|$ 
\\[0.6ex]
\hline
\multirow{5}{*}{$2\times 2$}
& 10 & 5.62e+00 & 3.85e+00 & 1.42e+00 & 5.20e-01 & 1.95e-01  &7.29e-02\\
& 15 & 7.48e+00 & 6.04e+00 & 2.04e+00 & 7.03e-01 & 2.54e-01  &9.01e-02\\
& 20 & 9.25e+00 & 8.73e+00 & 2.96e+00 & 9.76e-01 & 3.42e-01  &1.23e-01\\
& 25 & 1.07e+01 & 1.13e+01 & 3.86e+00 & 1.43e+00 & 5.29e-01  &1.94e-01\\
& 30 & 1.28e+01 & 1.41e+01 & 5.70e+00 & 2.29e+00 & 9.33e-01  &3.80e-01\\
\hline
\multirow{5}{*}{$4\times 4$}
& 10 & 6.52e+00 & 1.56e+00 & 9.86e-01 & 6.64e-01 & 4.39e-01  &2.76e-01\\
& 15 & 7.90e+00 & 1.43e+00 & 8.22e-01 & 5.17e-01 & 3.07e-01  &1.76e-01\\
& 20 & 1.02e+01 & 2.44e+00 & 1.53e+00 & 9.79e-01 & 6.38e-01  &4.03e-01\\
& 25 & 1.06e+01 & 2.62e+00 & 1.58e+00 & 9.23e-01 & 5.28e-01  &3.01e-01\\
& 30 & 1.31e+01 & 2.67e+00 & 1.55e+00 & 9.93e-01 & 6.59e-01  &4.19e-01\\
\hline
\multirow{5}{*}{$8\times 8$}
& 10 & 6.84e+00 & 1.41e+00 & 1.21e+00 & 1.03e+00 & 8.78e-01  &7.43e-01\\
& 15 & 8.09e+00 & 1.11e+00 & 9.39e-01 & 7.76e-01 & 6.31e-01  &5.11e-01\\
& 20 & 1.05e+01 & 1.64e+00 & 1.31e+00 & 1.06e+00 & 8.82e-01  &7.39e-01\\
& 25 & 1.06e+01 & 1.75e+00 & 1.41e+00 & 1.17e+00 & 9.76e-01  &8.26e-01\\
& 30 & 1.35e+01 & 3.01e+00 & 2.53e+00 & 2.11e+00 & 1.77e+00  &1.50e+00
\\
\hline\hline
\end{tabular}
\end{table}

\begin{table}[h!]
\small
\centering
\setlength{\tabcolsep}{7pt}
\renewcommand{\arraystretch}{1.12}
\caption{Norms of powers of $\bE$ for the varying permittivity profile \eqref{eq:epsilon}. Results are shown for $h = k^{-3/2}$ and $\delta = H/4$ on 
$\left(0,\frac{2}{3}N^{1/2} \right) \times 
\left(0,\frac{2}{3}N^{1/2} \right)$.}
\label{tab:checkerboard_1k_norm_E_var}
\begin{tabular}{cccccccc}
\hline\hline
\rule{0pt}{3.2ex}%
$N$
& $k$
& $\|\bE\|$
& $\|\bE^{2\sqrt{N}-1}\|$
& $\|\bE^{2\sqrt{N}}\|$
& $\|\bE^{2\sqrt{N}+1}\|$
& $\|\bE^{2\sqrt{N}+2}\|$
& $\|\bE^{2\sqrt{N}+3}\|$
\\[0.6ex]
\hline
\multirow{5}{*}{$2\times 2$}
& 10 & 5.73e+00 & 4.04e+00 & 1.61e+00 & 6.64e-01 & 2.76e-01  &1.10e-01\\
& 15 & 7.71e+00 & 6.91e+00 & 2.83e+00 & 1.41e+00 & 6.40e-01  &2.93e-01\\
& 20 & 9.58e+00 & 9.95e+00 & 4.62e+00 & 2.07e+00 & 9.49e-01  &4.58e-01\\
& 25 & 1.15e+01 & 1.34e+01 & 6.54e+00 & 3.39e+00 & 1.76e+00  &8.95e-01\\
& 30 & 1.29e+01 & 1.76e+01 & 9.30e+00 & 5.04e+00 &2.72e+00   &1.47e+00
\\
\hline
\multirow{5}{*}{$4\times 4$}
& 10 & 6.51e+00 & 1.53e+00 & 1.03e+00 & 7.07e-01 & 4.77e-01  &3.22e-01\\
& 15 & 8.29e+00 & 2.74e+00 & 1.75e+00 & 1.15e+00 & 7.52e-01  &5.09e-01\\
& 20 & 1.02e+01 & 2.58e+00 & 1.52e+00 & 9.71e-01 & 6.25e-01  &3.98e-01\\
& 25 & 1.14e+01 & 3.93e+00 & 2.35e+00 & 1.49e+00 & 9.61e-01  &6.12e-01\\
& 30 & 1.31e+01 & 4.45e+00 & 2.78e+00 & 1.83e+00 & 1.21e+00  &7.95e-01\\
\hline
\multirow{5}{*}{$8\times 8$}
& 10 & 6.82e+00 & 1.05e+00 & 8.28e-01 & 7.13e-01 & 6.32e-01  &5.53e-01\\
& 15 & 8.32e+00 & 1.79e+00 & 1.51e+00 & 1.28e+00 & 1.08e+00  &9.05e-01\\
& 20 & 1.05e+01 & 1.73e+00 & 1.44e+00 & 1.21e+00 & 1.01e+00  &8.29e-01\\
& 25 & 1.14e+01 & 1.71e+00 & 1.36e+00 & 1.12e+00 & 9.46e-01  &7.91e-01\\
& 30 & 1.35e+01 & 2.28e+00 & 1.88e+00 & 1.53e+00 & 1.25e+00  &1.03e+00
\\
\hline\hline
\end{tabular}
\end{table}

\subsection{Iteration counts of the RAS-imp preconditioned Richardson method}\label{sec:num6}
In this subsection, we consider the source-free Maxwell equation on the unit square $\Omega=[0,1]^2$, with impedance data generated from the plane wave
\begin{equation*}
    \bu^{\rm pw}(\bx)
    =
    \left(-k_2\exp(i\bm{k}\cdot\bx),\,
    k_1\exp(i\bm{k}\cdot\bx)\right)^T,
\end{equation*}
where $\bm{k}=(k_1,k_2)^T$ satisfies $|\bm{k}|=k$. We report iteration counts of the RAS-imp preconditioned Richardson method over a range of parameters, using a direct solver for each local subproblem and terminating once the relative residual falls below $10^{-6}$ (with at most $500$ iterations). Both strip and checkerboard decompositions are considered, in homogeneous and heterogeneous media~\eqref{eq:epsilon}. For comparison, the iteration counts of the right-preconditioned restarted GMRES method (restart $50$, at most $20$ restarts) under the same stopping criterion are shown in parentheses.

Table~\ref{tab:strip_dir} reports the strip decomposition with homogeneous parameters. Convergence is achieved in all configurations, the iteration counts are robust in $k$ for fixed $N$, and grow with $N$ for fixed $k$ as expected from the longer information-propagation distance.
GMRES consistently requires fewer iterations, confirming the effectiveness of RAS-imp as a Krylov preconditioner.

Table~\ref{tab:i_strip_dir} reports the corresponding results for the variable-permittivity profile~\eqref{eq:epsilon}. Convergence is generally slower than in the homogeneous case.
The deterioration is also visible in the GMRES counts; a plausible cause is that the larger $\epsilon$ in the central region lowers the local wave speed and may promote internal reflections, hindering information transfer across subdomain interfaces.

\begin{table}[h!]
    \centering
    \caption{Richardson iteration counts for the strip domain decomposition in the homogeneous case,  with $h = k^{-3/2}$ and $\delta = H/4$. The corresponding GMRES iteration counts shown in brackets.}
    \label{tab:strip_dir}
\begin{tabular}{ccccccccc}
    \hline\hline
\multirow{2}{*}{$N\backslash k~ ($\# DOF$)$} & \multicolumn{2}{c}{20~(23941)} & \multicolumn{2}{c}{40~(191016)} & \multicolumn{2}{c}{60~(646816)} & \multicolumn{2}{c}{80~(1535105)} \\\cmidrule(lr){2-3} \cmidrule(lr){4-5} \cmidrule(lr){6-7} \cmidrule(lr){8-9}
                                                            &  & \#iter  &  & \#iter  &  & \#iter  &  & \#iter  \\\hline
2 &&10 (8)&&10 (8)&&10 (9)&&10 (9)\\
4 &&24 (16)&&19 (18)&&23 (19)&&26 (20)\\
6 &&39 (30)&&43 (29)&&40 (30)&&37 (33)\\
8 &&52 (40)&&92 (37)&&61 (44)&&82 (42)\\
\hline\hline
\end{tabular}
\end{table}

\begin{table}[h!]
    \centering
    \caption{Richardson iteration counts for the strip domain decomposition
    with the varying permittivity~\eqref{eq:epsilon}, using $h=k^{-3/2}$
    and overlap $\delta=H/4$. The corresponding GMRES iteration counts are
    shown in brackets.}
    \label{tab:i_strip_dir}
\begin{tabular}{ccccccccc}
    \hline\hline
\multirow{2}{*}{$N\backslash k~(\#\mathrm{DOF})$}
& \multicolumn{2}{c}{20~(23941)}
& \multicolumn{2}{c}{40~(191016)}
& \multicolumn{2}{c}{60~(646816)}
& \multicolumn{2}{c}{80~(1535105)} \\
\cmidrule(lr){2-3} \cmidrule(lr){4-5} \cmidrule(lr){6-7} \cmidrule(lr){8-9}
&  & \#iter &  & \#iter &  & \#iter &  & \#iter \\
\hline
2 && 14 (10) && 13 (11) && 13 (11) && 20 (13) \\
4 && 27 (22) && 36 (26) && 39 (29) && 54 (33) \\
6 && 76 (40) && 68 (44) && 95 (47) && 112 (53) \\
8 && -- (59) && -- (59) && -- (73) && -- (79) \\
\hline\hline
\end{tabular}
\end{table}


Tables~\ref{tab:checkerboard_dir} and~\ref{tab:i_checkerboard} report the checkerboard decomposition for the homogeneous and variable-coefficient cases, respectively. Compared with strip decompositions, the iteration counts are typically larger, which is consistent with the richer interface structure and the larger number of propagation paths discussed in \S\ref{sec:num5}. The same trend persists for GMRES, and Krylov acceleration again provides a substantial reduction. 
In the variable-coefficient case, however, a few configurations converge under the checkerboard layout while the strip counterpart does not, which we attribute to the additional propagation paths available in the checkerboard layout.

\begin{table}[h!]
    \centering
    \caption{Richardson iteration counts for the checkerboard domain decomposition in the homogeneous case, with $h =k^{-3/2}$ and $\delta = H/4$; the corresponding GMRES iteration counts are shown in brackets.}
    \label{tab:checkerboard_dir}
\begin{tabular}{ccccccccc}
    \hline\hline
\multirow{2}{*}{$N\backslash k~ (\# {\rm DOF})$} & \multicolumn{2}{c}{20~(23941)} & \multicolumn{2}{c}{40~(191016)} & \multicolumn{2}{c}{60~(646816)} & \multicolumn{2}{c}{80~(1535105)} \\\cmidrule(lr){2-3} \cmidrule(lr){4-5} \cmidrule(lr){6-7} \cmidrule(lr){8-9}
                                                            &  & \#iter  &  & \#iter  &  & \#iter  &  & \#iter  \\\hline
$2\times 2$ &&16 (11)&&19 (12)&&21 (12)&&23 (12)\\
$4\times 4$ & &43 (27)& &33 (26)& &34 (28)& &40 (28) \\
$6\times 6$ & &64 (45)& &50 (40)& &47 (40)& &58 (42) \\
$8\times 8$ & &126 (74)& &110 (59)& &105 (58)& &66 (57) \\
\hline\hline
\end{tabular}
\end{table}

\begin{table}[h!]
    \centering
    \caption{Richardson iteration counts for the checkerboard domain decomposition
    with the varying permittivity~\eqref{eq:epsilon}, $h=k^{-3/2}$ and $\delta=H/4$.
    The corresponding GMRES iteration counts are shown in brackets.}
    \label{tab:i_checkerboard}
\begin{tabular}{ccccccccc}
    \hline\hline
\multirow{2}{*}{$N \backslash k~(\#\mathrm{DOF})$}
& \multicolumn{2}{c}{20~(23941)}
& \multicolumn{2}{c}{40~(191016)}
& \multicolumn{2}{c}{60~(646816)}
& \multicolumn{2}{c}{80~(1535105)} \\
\cmidrule(lr){2-3} \cmidrule(lr){4-5} \cmidrule(lr){6-7} \cmidrule(lr){8-9}
&  & \#iter &  & \#iter &  & \#iter &  & \#iter \\
\hline
$2\times 2$ &&24 (13) &&27 (15) &&34 (17) &&38 (18) \\
$4\times 4$ &&60 (33) &&56 (36) &&63 (40) &&101 (46) \\
$6\times 6$ &&230 (56) &&112 (59) &&166 (64) &&128 (72) \\
$8\times 8$ &&419 (98) &&-- (80) &&135 (93) &&-- (106) \\
\hline\hline
\end{tabular}
\end{table}

\section*{Acknowledgements}
SG was supported by National Natural Science Foundation of China (No. 12371410, 12411530066).  EAS was supported by 
ERC Synergy Grant PSINumScat (101167139). CL and YY were supported by Hetao Shenzhen-Hong Kong Science and Technology Innovation Cooperation Zone Project (No.HZQSWS-KCCYB-2024016).

\footnotesize{

\bibliographystyle{plain}
\bibliography{reference}
}

\end{document}